\crefname{hypothesis}{Hypothesis}{Hypotheses}
\crefname{fact}{Fact}{Facts}
\newcommand{\ii}{\mathrm{i}}
\newcommand{\real}{\mathbb{R}}
\newcommand{\natu}{\mathbb{N}}
\newcommand{\bsb}{\boldsymbol{b}}
\newcommand{\bsk}{\boldsymbol{k}}
\newcommand{\bsu}{\boldsymbol{u}}
\newcommand{\bsx}{\boldsymbol{x}}
\newcommand{\bszero}{\boldsymbol{0}}
\newcommand{\bsone}{\boldsymbol{1}}
\newcommand{\bsell}{{\boldsymbol{\ell}}}
\newcommand{\bskappa}{\boldsymbol{\kappa}}
\newcommand{\rd}{\,\mathrm{d}}
\newcommand{\e}{\mathbb{E}}
\newcommand{\walk}{\mathrm{wal}_k}
\newcommand{\walkj}{\mathrm{wal}_{k_j}}
\newcommand{\walbsk}{\mathrm{wal}_{\bsk}}
\newcommand{\cp}{\mathcal{P}}
\newcommand{\supp}{\boldsymbol{s}}
\newcommand{\tc}{\Tilde{c}}
\newcommand{\teta}{\eta}
\newcommand{\mix}{\mathrm{mix}}
\newcommand{\itr}{\mathbb{I}}
\title{Quasi-Monte Carlo integration over $\mathbb{R}^s$ with boundary-damping importance sampling \thanks{Submitted to the editors DATE.
\funding{This work of the first author was funded by the Austrian Science Fund (FWF) Project DOI 10.55776/P34808. The work of the third author was funded by the Guangdong Basic and Applied Basic Research Foundation grant  2024A1515011876 and 2025A1515011888.}}}
\author{Zexin Pan\thanks{Johann Radon Institute for Computational and Applied Mathematics, Linz, Austria 
  (\email{zexin.pan@oeaw.ac.at}).} \and Du Ouyang\thanks{Department of Mathematical Sciences, Tsinghua University, Beijing 100084, People's Republic of China (\email{oyd21@mails.tsinghua.edu.cn}).} \and Zhijian He\thanks{Corresponding author. School of Mathematics, South China University of Technology, Guangzhou 510641, People's Repulic of China (\email{hezhijian@scut.edu.cn}).}
  }
\begin{document}

\maketitle

\begin{abstract}
This paper proposes a new importance sampling (IS) that is tailored to quasi-Monte Carlo (QMC) integration over $\mathbb{R}^s$. IS introduces a multiplicative adjustment to the integrand by compensating the sampling from the proposal instead of the target distribution. Improper proposals result in severe adjustment factor for QMC. Our strategy is to first design a adjustment factor to meet desired regularities and then determine a tractable transport map from the standard uniforms to the proposal for using QMC quadrature points as inputs.
The transport map has the effect of damping the boundary growth of the resulting integrand so that the effectiveness of QMC can be reclaimed. Under certain conditions on the original integrand, our proposed IS  enjoys a fast convergence rate independently of the dimension $s$, making it amenable to high-dimensional problems.
\end{abstract}

\begin{keywords}
Quasi-Monte Carlo, importance sampling, transport maps
\end{keywords}

\begin{MSCcodes}
41A63, 65D30, 97N40
\end{MSCcodes}

\section{Introduction}
Quasi-Monte Carlo (QMC) has gained its success in many fields, including computational finance \cite{lecuyer:2009,zhang:2021}, uncertainty quantification \cite{kaarnioja:2022,schillings:2020}. Although it has the potential to improve the convergence rate of plain Monte Carlo, the gain of QMC depends on the regularity of the underlying functions and the sampling proposals. Importance sampling (IS) provides a way to choose tractable sampling proposals instead of the underlying distribution, which is a widely used variance reduction technique in the Monte Carlo setting \cite{owen:2013}. IS is more than just a variance reduction method. Particularly, it is used within Bayesian statistics and Bayesian inverse problems as an approximation of the target distribution by weighted samples that are generated from some typical proposals \cite{agapiou:2017}. Recently, IS is combined with QMC to achieve a faster rate of convergence \cite{he:2023,ouyang:2024,zhang:2021}. IS can bring enormous gains in QMC, and it can also backfire, yielding a severe integrand with superfast growth boundary \cite{ouyang:2024} when simple QMC would have had a more regularity. It is an open problem to choose proposals in IS to  optimize the performance of QMC. In this paper, we propose a new IS to dampen the growth of the integrand around the boundary of the domain, which is preferable for QMC
integration of unbound integrands \cite{he:2023,haltavoid}. 

Throughout this paper, we let $\real=(-\infty,\infty)$ and $\itr=(0,1)$. We use $\bsx$ for coordinates in $\real^s$ and $\bsu$ for coordinates in $\itr^s$. Consider integral of the form
$$\mu=\int_{\real^s} f(\bsx) \prod_{j=1}^s \varphi(x_j)\rd \bsx,$$
where $f:\real^s \to \real$ is a real-valued integrand and $\varphi:\real\to \real$ is a probability density function. Let $\Phi(x)=\int_{-\infty}^x \varphi (y) \rd y$ be the cumulative distribution function (CDF) and $\Phi^{-1}:\itr \to \real$ be the inverse CDF (quantile function). QMC integration takes quadrature points in the unit cube. Before using QMC, the sampling proposal needs to be expressed as a transformation of the standard uniform distribution, say $T:\itr^d\to \real^s$. The transformation $T$ is also called a transport map in the literature \cite{liu2024transportquasimontecarlo}. The dimension $d$ does not necessarily agree with $s$, but for simplicity we take $d=s$ in this paper.  To estimate $\mu$, we consider estimators of the form
\begin{equation}\label{eqn:muhat}
 \hat{\mu}=\frac{1}{n}\sum_{i=0}^{n-1} w(\bsu_i)  f\circ T(\bsu_i) \quad \text{for} \quad T(\bsu)=(T_1(u_1),\dots, T_s(u_s)),   
\end{equation}
where $\{\bsu_0,\dots,\bsu_{n-1}\} \subseteq \itr^s$ is a deterministic QMC point set or randomized QMC (RQMC) point set for easy of error estimation \cite{vLEC02a,owen:2013}, the transport map $T_j:\itr\to \real$ is differentiable and the \textit{weight function}
$$w(\bsu)=\prod_{j=1}^s w_j(u_j)  \quad \text{for} \quad w_j(u)=T_j'(u)  \varphi\circ T_j(u) .$$
IS also takes the form \eqref{eqn:muhat}, in which the sampling proposal has independent marginals $T(u_j)$ and $w(\bsu)$ is  known as likelihood ratio. Many QMC methods proposed in the literature share this form. Examples are:
\begin{itemize}
    \item $T_j(u)=\Phi^{-1}(u)$, commonly known as inversion methods \cite{dick2025quasi, nichols:2014, haltavoid, ye2025medianqmcmethodunbounded}. For this case, $w(\bsu)=1$.
    \item $T_j(u)=a_j u+ b_j(1-u)$ for $a_j,b_j\in \real$, commonly known as truncation methods \cite{dick2018optimal, goda2024randomizing, kazashi2023suboptimality, nuyens2023scaled}.
    \item $T_j(u)=\Phi^{-1}_\nu(u)$ with $\Phi^{-1}_\nu(u)$ the inverse CDF of a Student's $t$-distribution with $\nu$ degree of freedom \cite{ouyang:2024}. A recent paper \cite{suzuki2025mobius} considers the M{\"o}bius transformation $T_j(u)=-\cot(\pi u)$, which can be viewed as a special case since $\cot(\pi u)$ is the inverse CDF of a Cauchy distribution.
\end{itemize}
We also note that there are many interesting methods beyond the above form \cite{dick2011quasi, dung2024optimal, he:2023, liu2024transportquasimontecarlo, wang2025convergence}.

While previous studies focus on the choice of $T_j$ and the resulting smoothness of $w_j$, we take a different perspective: we first design $w_j$ with required smoothness and then derive $T_j$ by solving the differential equation $w_j(u)=T_j'(u) \varphi\circ T_j(u)$, yielding
\begin{equation}\label{eqn:Tj}
    T_j(u)=\Phi^{-1}\Big(\int_{0}^u w_j(t)\rd t\Big).
\end{equation}
Specifically, we choose $w_j$ from a one-parameter family as follows. Let $\eta:[0,1/2]\to [0,1/2]$ be a differentiable monotonic function satisfying $\eta(0)=0$ and $\eta(1/2)=1/2$. For $\theta\in (0, 1/2]$, we define
\begin{equation}\label{eqn:wtheta}
w_\theta(u)=\begin{cases}
    (1-\theta)^{-1}\eta(u/\theta), &\text{ if } u\in (0,\theta/2] \\
    (1-\theta)^{-1}(1-\eta(1-u/\theta)) , &\text{ if } u\in (\theta/2,\theta) \\
    (1-\theta)^{-1}, &\text{ if } u\in [\theta,1/2] \\
    w_\theta(1-u), &\text{ if } u\in (1/2,1), \\
\end{cases}  
\end{equation}
where 
\begin{equation}\label{eq:tildeetap}
\eta(u)=\eta_p(u)=\begin{cases}
    2^{-p-2} u^{-p-1}\exp(2^p-u^{-p}), &\text{ if } u\in (0,1/2]\\
    0, &\text{ if } u=0
\end{cases} \text{ for } \ p\geq 1.
\end{equation}
While there are many admissible choices for $\eta$, we choose the function \eqref{eq:tildeetap} for its simplicity and computational convenience. By construction, $w_\theta(u)=w_\theta(1-u)$, $\sup_{u\in \itr} w_\theta(u)= (1-\theta)^{-1}$ and $\int_{0}^1 w_\theta(u)\rd u=1$.
Figure \ref{fig:wtheta} shows the case of $w_\theta(u)$ with $\theta = 0.2$ and $p=1$, which decays to zero quickly as $u$ approaches $0$ or $1$.  

\begin{figure}[ht]
    \centering
    \includegraphics[width=0.6\linewidth]{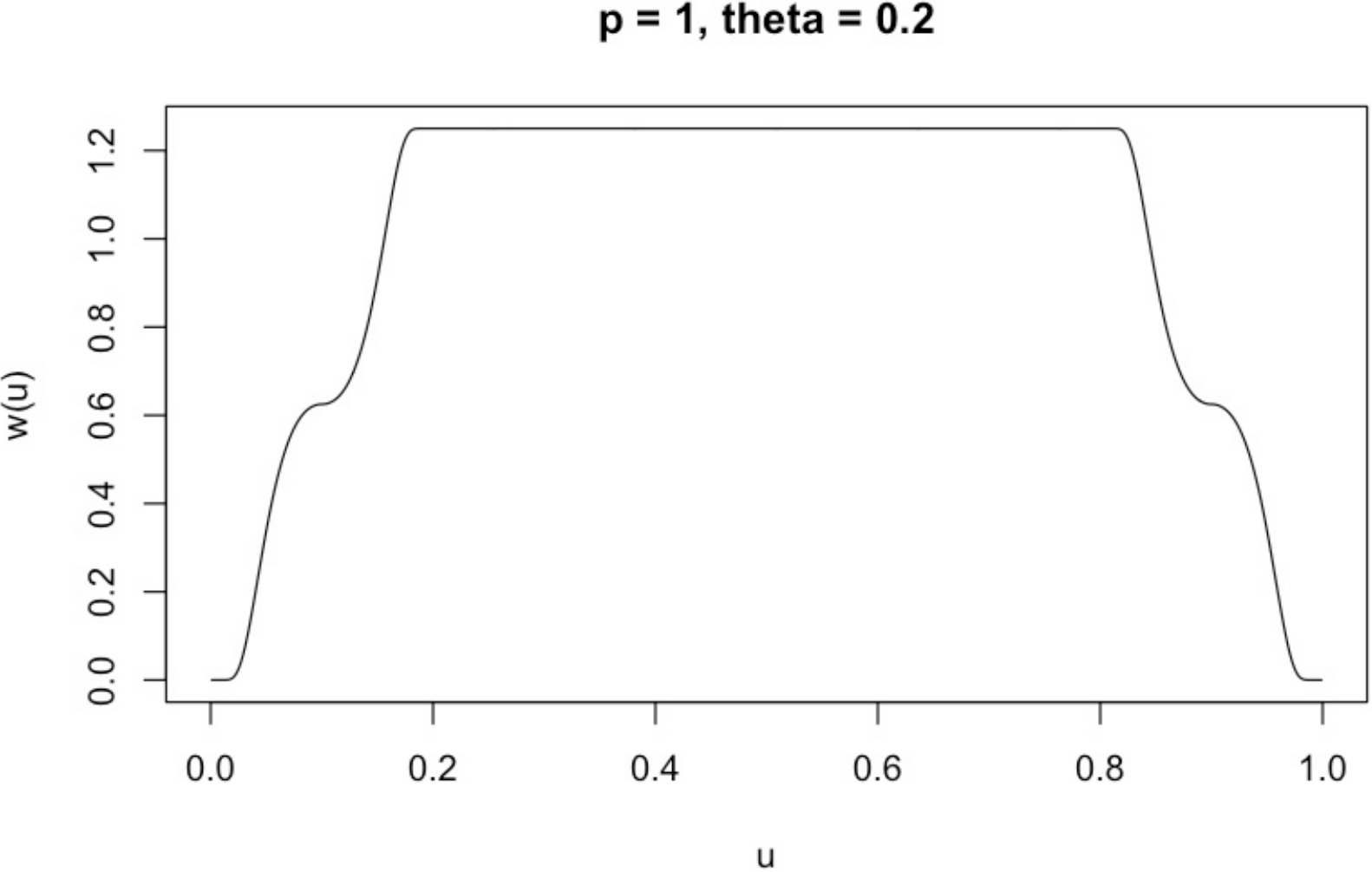}
    \caption{An illustration of $w_\theta(u)$ with $\theta =0.2$ and $p=1$.}
    \label{fig:wtheta}
\end{figure}

Given $\{\theta_j\mid j\in 1{:}s\}\subseteq (0,1/2]$ and $p\ge 1$, we choose $w_j(u)=w_{\theta_j}(u)$ for all $j=1,\dots,s$. Because $T_j$ maps $\itr$ onto $\real$, 
\begin{equation}\label{eqn:intfw}
    \int_{\itr^s}w(\bsu)f\circ T(\bsu) \rd\bsu=\int_{\real^s} f(\bsx) \prod_{j=1}^s \varphi(x_j)\rd \bsx=\mu.
\end{equation}
Hence, estimating $\mu$ is equivalent to integrating a new integrand $f^w(\bsu):=w(\bsu) f\circ T(\bsu)$
over $\itr^s$. The weight function $w(\bsu)$ can  dampen the growth of the integrand $f^w(\bsu)$ around the boundary of $\itr^s$. The proposed method is called the boundary-damping IS. Under appropriate choice of $\theta_j$, we show that $f^w$ has sufficient smoothness and can be efficiently integrated even in high-dimensional settings. Particularly, the boundary-damping IS yields a dimension-independent mean squared error rate under certain conditions on the integrand.

Our main contribution is three-fold. Firstly, we provide a novel IS to reclaim the performance of QMC by damping the boundary growth of the integrand. Unlike Laplace-based IS \cite{schillings:2020} that uses Gaussian proposals, our proposal is not a commonly used distribution but is easy to simulate. Secondly, we provide a rigorous error analysis based on generalized Fourier coefficients with an application to scrambled digital nets \cite{rtms}.   Thirdly, under certain conditions on the function $f(\bsx)$, our proposed IS can enjoy a faster convergence rate  than Monte Carlo while breaking the curse of dimensionality, making it amenable to high-dimensional problems. Lattice-based QMC quadrature rules can be constructed to yield asymptotic error bound independently of the dimension for unbounded integrands in weighted reproducing kernel Hilbert spaces with POD (``product and order dependent") weights \cite{nichols:2014}. Our analysis does not need to introduce weighted spaces and the digital net quadrature points are off-the-shelf.

The rest of this paper is organized as follow. Section~\ref{sec:back} provides some preliminaries on function norms and digital nets. Section~\ref{sec:bounds} studies upper bounds on generalized Fourier coefficients of the integrand. Section~\ref{sec:norm} investigates the norms of $f$ and $f^w$. Section~\ref{sec:main} conducts the numerical analysis on scrambled net-based integration. Numerical results are presented in Section~\ref{sec:numer} to show the effectiveness of our proposed method. Section~\ref{sec:conclusion} concludes this paper.

\section{Preliminaries}\label{sec:back}
We first introduce some notations that will be used in the following. For a vector $\bsx\in\real^s$ and a subset $v\subseteq 1{:}s$, $\bsx_v$  denotes the subvector of $\bsx$ indexed by $v$, while $\bsx_{v^c}$ denotes the subvector indexed by $1{:}s\setminus v$. Let $\natu$ be the set of positive integers and $\natu_0 = \natu\cup\{0\}$. For $\bsk\in \natu_0^s$, $\supp(\bsk):=\{j\in 1{:}s\mid k_j\neq 0\}$ denotes the support of $\bsk$. Let $\Vert\bsx\Vert_q=(\sum_{j=1}^s |x_j|^q)^{1/q}$ for $q>0$. For a nonempty set $v\subseteq 1{:}s$, define the mixed derivative by
$$\partial^v f(\bsx) = \left(\prod_{j\in v}\frac{\partial}{\partial x_j}\right)f(\bsx),$$
and define $\partial^\emptyset f(\bsx)=f(\bsx)$ by convention. Let $\mathbb{C}$ be the set of complex numbers. 
We define the operator
$\mathcal{I}:L^\infty(\itr)\to L^\infty(\itr)$ by
$$\mathcal{I}(\phi)(u)=\int_0^u \phi(t) \rd t.$$ 
All constants in this paper have an implicit dependency on $\varphi$ and we suppress it from the notation for simplicity.

\subsection{Function norms}
We assume that the density $\varphi (x)$  is a strictly  positive, bounded, symmetric, light-tailed  function in the following.

\begin{assumption}~\label{assump:rho}
Assume that $\varphi(x)>0$, $\varphi(x)=\varphi(-x)$, $\varphi_\infty:=\sup_{x\in \real} \varphi(x)<\infty$, and for any $\varepsilon>0$, there exists a constant $c_\varepsilon>0$ such that
$\varphi(x)\geq c_\varepsilon \Phi(x)^{1+\varepsilon}$ for any $x\le 0$. 
\end{assumption}

Under Assumption~\ref{assump:rho},  the CDF $\Phi(x)$ and  its inverse $\Phi^{-1}(u)$ is differentiable and strictly increasing. It is easy to verify the Gaussian density $\varphi(x)=\exp(-x^2/2)/\sqrt{2\pi}$ satisfies Assumption~\ref{assump:rho} due to the inequality \cite{gordon1941values} 
$$\int_x^\infty \varphi(y)\rd y \leq \frac{1}{x}\varphi(x) \text{ for } x> 0.$$
The condition on symmetry can be dropped by imposing a proper condition on the right tail of the density $\varphi(x)$. For simplicity, we work on symmetrical densities.

This paper will use some function norms as defined in the following:
\begin{align*}
    \Vert f \Vert_{L^q(\itr^s)}&=\begin{cases}
        \Big(\int_{\itr^s} |f(\bsu)|^q \rd \bsu \Big)^{1/q}&0<q<\infty\\
        \sup_{u\in \itr^s}|f(\bsu)|&q=\infty
    \end{cases},\\
    \Vert f \Vert_{L^q(\real^s,\varphi)}&=\Big(\int_{\real^s} |f(\bsx)|^q  \prod_{j=1}^s \varphi(x_j) \rd\bsx\Big)^{1/q},\\
    \Vert f \Vert_{W^{1,q}_{\mix}(\real^s,\varphi)}&=\Big(\sum_{v\subseteq 1{:}s} \Vert \partial^v  f \Vert^q_{L^q(\real^s,\varphi)}\Big)^{1/q},\\
    \Vert f \Vert_{L^{q,\infty}(\real^s,\varphi)}&=\Big(\sup_{\bsx\in \real^s} |f(\bsx)|^q  \prod_{j=1}^s \frac{\varphi(x_j)}{\varphi_\infty}  \Big)^{1/q},\\
    \Vert f \Vert_{W^{1,q,\infty}_{\mix}(\real^s,\varphi)}&=\Big(\sum_{v\subseteq 1{:}s} \Vert \partial^v  f \Vert^q_{L^{q,\infty}(\real^s,\varphi)}\Big)^{1/q}.
\end{align*}
Notice that for constant functions $f(\bsx)=c$, all of the above norms reduce to $|c|$. More generally, if $f(\bsx)$ does not depend on $x_{v^c}$ for $v\subseteq 1{:}s$, we can treat $f$ either as a function over $\real^s$ or a function over $\real^{|v|}$ by removing redundant input variables $x_{v^c}$. Under either interpretation, the resulting norm is the same according to the definitions given above. This property will be useful later when we analyze the ANOVA decomposition of $f$. The next lemma shows $f\in L^{q,\infty}(\real^s,\varphi)$ almost implies $f\in L^{q}(\real^s,\varphi)$.

\begin{lemma}\label{lem:embedding}
    For $\varphi$ satisfying Assumption~\ref{assump:rho} and $q>q'\geq 1$,
    $$\Vert f \Vert_{L^{q'}(\real^s,\varphi)}\leq C^s_{q,q'}\Vert f\Vert_{L^{q,\infty}(\real^s,\varphi)}$$
    and 
    $$\Vert f \Vert_{W^{1,q'}_{\mix}(\real^s,\varphi)}\leq \tilde C^s_{q,q'}\Vert f\Vert_{W^{1,q,\infty}_{\mix}(\real^s,\varphi)},$$
    where  $C_{q,q'}, \tilde C_{q,q'}$ are constants depending on $q$ and $q'$.
\end{lemma}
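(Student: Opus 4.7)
The plan is to establish the $L^{q'}$ bound through a pointwise domination of $|f|$, reducing everything to finiteness of $\int_\real \varphi^\alpha\rd x$ for $\alpha\in(0,1)$, and then to bootstrap to the mixed-derivative version by applying the scalar inequality to each $\partial^v f$ and performing a cheap $\ell^{q'}$-to-$\ell^q$ conversion on the finite sum over $v\subseteq 1{:}s$.

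For the first inequality, I would unpack the definition of $\Vert\cdot\Vert_{L^{q,\infty}(\real^s,\varphi)}$ to get the pointwise bound
\begin{equation*}
|f(\bsx)|\le \Vert f\Vert_{L^{q,\infty}(\real^s,\varphi)}\prod_{j=1}^s\left(\frac{\varphi_\infty}{\varphi(x_j)}\right)^{1/q}.
\end{equation*}
Raising this to the $q'$-th power and integrating against $\prod_j\varphi(x_j)$ reduces the claim to showing $I_\alpha:=\int_\real\varphi(x)^\alpha\rd x<\infty$ for $\alpha = 1-q'/q\in(0,1)$, at which point one reads off a constant of the required product form $C_{q,q'}^s$ with $C_{q,q'}$ depending on $\varphi_\infty$ and $I_\alpha$.

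The main obstacle is deducing this $L^\alpha$-integrability of $\varphi$ from Assumption~\ref{assump:rho}, which only supplies a \emph{lower} bound on $\varphi$ rather than a tail upper bound. The approach I would take is the change of variables $u=\Phi(x)$ on the half-line $(-\infty,0]$, which recasts the integral as $\int_0^{1/2}\varphi(\Phi^{-1}(u))^{\alpha-1}\rd u$. Because $\alpha-1<0$, the hypothesis $\varphi\ge c_\varepsilon\Phi^{1+\varepsilon}$ flips the right way and bounds the integrand by $c_\varepsilon^{\alpha-1}u^{(\alpha-1)(1+\varepsilon)}$. Selecting any $\varepsilon<\alpha/(1-\alpha)$ makes the exponent strictly larger than $-1$, so the integral converges; symmetry of $\varphi$ then handles the positive half-line.

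For the mixed-norm inequality, I would simply invoke the first inequality with $f$ replaced by $\partial^v f$ for each $v\subseteq 1{:}s$, giving $\Vert\partial^v f\Vert_{L^{q'}(\real^s,\varphi)}^{q'}\le C_{q,q'}^{sq'}\Vert\partial^v f\Vert_{L^{q,\infty}(\real^s,\varphi)}^{q'}$. Summing over the $2^s$ subsets and applying H\"older on this finite index set to convert the $\ell^{q'}$-sum into an $\ell^q$-sum introduces only a factor $2^{s(1-q'/q)}$, yielding the claimed bound with $\tilde C_{q,q'}=C_{q,q'}\cdot 2^{1/q'-1/q}$.
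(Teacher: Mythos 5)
Your proof matches the paper's approach in all essential respects: the same pointwise domination $|f(\bsx)|\le\varphi_\infty^{s/q}\Vert f\Vert_{L^{q,\infty}}\prod_j\varphi(x_j)^{-1/q}$, the same reduction to $\int_\real\varphi^\alpha\rd x<\infty$ via the lower-tail hypothesis in Assumption~\ref{assump:rho} (the paper isolates this as Lemma~\ref{lem:tailbound}, taking $\varepsilon=\alpha$ rather than your slightly more flexible $\varepsilon<\alpha/(1-\alpha)$, but the substitution $u=\Phi(x)$ and exponent bookkeeping are identical), and the same termwise application plus finite-dimensional $\ell^{q'}$-to-$\ell^q$ comparison over the $2^s$ subsets for the mixed-derivative version. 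Both your construction and your final constant $\tilde C_{q,q'}=C_{q,q'}2^{1/q'-1/q}$ agree with the paper's.
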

\begin{proof}
    See the appendix.
\end{proof}

\begin{remark}\label{rem:rqmcrate}
   If $f\in W^{1,q,\infty}_{\mix}(\real^s,\varphi)$ and $\varphi(x)=\exp(-x^2/2)/\sqrt{2\pi}$, we have for any $v\subseteq1{:}s$,
   $$|\partial^v  f(\bsx)|\le c \left(\prod_{j=1}^s \varphi(x_j)\right)^{-1/q}=c'\exp\left(\frac{1}{2q}\Vert\bsx\Vert_2^{2}\right)$$
   for some $c,c'>0$. This implies that $f(\bsx)$ belongs to the superfast growth class $G_e(1/(2q),c',2)$ defined in \cite{ouyang:2024}. For $q>1$, RQMC integration with the usual inversion method yields a root mean squared error of $O(n^{-1+1/q+\epsilon})$ for arbitrarily small $\epsilon>0$ \cite{he:2023,haltavoid}, where the implied constant depends on the dimension $s$ and $\epsilon$.
\end{remark}

\subsection{ANOVA decomposition over $\real^s$}

 Following the framework in \cite{kuo2010decompositions}, we introduce the generalized ANOVA decomposition. Let $P_j:L^1(\real^s,\varphi)\to L^1(\real^s,\varphi)$ denote the integration operator
$$P_j(f)(\bsx)=\int_{\real} f(\bsx) \varphi(x_j) \rd x_j \text{ for } \bsx \in \real^{s}.$$
Notice that $P_j(f)$ does not depend on $x_j$ and $P^2_j=P_j$. We further define the iterated integration operator $P_v=\prod_{j\in v}P_j$. By Fubini's theorem, $P_j$ in $P_v$ can be applied in any order. By convention, $P_\emptyset=I$ is the identity operator. The ANOVA decomposition of $f\in L^1(\real^s,\varphi)$  is given by
\begin{equation}\label{eqn:anovadecomp}
 f=\sum_{v\subseteq 1{:}s} f_v \text{ for } f_v=\Big(\prod_{j\in v} (I-P_j)\Big) P_{1{:}s\setminus v} f.   
\end{equation}
It follows that if $j\in v$,
\begin{equation}\label{eqn:ANOVAPj}
  P_j (f_v)=(P_j-P_j^2)\Big(\prod_{j'\in v, j'\neq j} (I-P_{j'})\Big) P_{1{:}s\setminus v} f =0.  
\end{equation}
The next lemma shows each ANOVA component $f_v$ inherits the smoothness of $f$. 
\begin{lemma}\label{lem:anova}
    If $f\in W^{1,q}_{\mix}(\real^s,\varphi)$, then $f_v\in W^{1,q}_{\mix}(\real^s,\varphi)$ for all $v\subseteq 1{:}s$. Furthermore, if $\varphi$ satisfies Assumption~\ref{assump:rho} and $f\in W^{1,q,\infty}_{\mix}(\real^s,\varphi)$ with $q>1$, then $f_v\in W^{1,q,\infty}_{\mix}(\real^s,\varphi)$ for all $v\subseteq 1{:}s$.
\end{lemma}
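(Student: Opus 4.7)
The plan is to reduce both statements to the boundedness of the single-variable integration operator $P_j$ on the relevant norm. Expanding the product in the definition of the ANOVA component yields
$$f_v \;=\; \prod_{j\in v}(I-P_j)\, P_{1{:}s\setminus v} f \;=\; \sum_{u\subseteq v}(-1)^{|v\setminus u|}\, P_{1{:}s\setminus u} f.$$
Since $P_j$ commutes with $\partial_k$ for $k\neq j$ (differentiation under the integral) and $\partial_j P_j g\equiv 0$ because $P_j g$ is independent of $x_j$, for any $w\subseteq 1{:}s$ one gets
$$\partial^w f_v \;=\; \sum_{\substack{u\subseteq v\\ w\subseteq u}}(-1)^{|v\setminus u|}\, P_{1{:}s\setminus u}\,\partial^w f,$$
and in particular $\partial^w f_v\equiv 0$ whenever $w\not\subseteq v$. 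Consequently it suffices to bound each $\|P_{1{:}s\setminus u}\partial^w f\|$ by a constant multiple of $\|\partial^w f\|$ in the target norm, and then apply the triangle inequality across the at most $2^{|v|}$ summands.

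For $f\in W^{1,q}_{\mix}(\real^s,\varphi)$, applying Jensen's inequality to the probability measure $\varphi(x_j)\rd x_j$ gives
$$|P_j g(\bsx)|^q \;\leq\; \int_\real |g(\bsx)|^q\varphi(x_j)\rd x_j \;=\; P_j(|g|^q)(\bsx).$$
Integrating against $\prod_i\varphi(x_i)\rd \bsx$ and using that $\varphi(x_j)$ has unit mass yields $\|P_j g\|_{L^q(\real^s,\varphi)}\leq \|g\|_{L^q(\real^s,\varphi)}$, so $P_j$ is a contraction. Iterating in the derivative identity above produces $\|f_v\|_{W^{1,q}_{\mix}(\real^s,\varphi)}\leq 2^{|v|}\|f\|_{W^{1,q}_{\mix}(\real^s,\varphi)}$.

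For the $L^{q,\infty}$ case, the definition of the norm gives the pointwise bound $|g(\bsx)|\leq \|g\|_{L^{q,\infty}(\real^s,\varphi)}\varphi_\infty^{s/q}\prod_i\varphi(x_i)^{-1/q}$. Substituting into $P_j g$,
$$|P_j g(\bsx)| \;\leq\; \|g\|_{L^{q,\infty}(\real^s,\varphi)}\varphi_\infty^{s/q}\prod_{i\neq j}\varphi(x_i)^{-1/q}\int_\real \varphi(t)^{1-1/q}\rd t.$$
Under Assumption~\ref{assump:rho} with $q>1$, the integral $I_q:=\int_\real\varphi^{(q-1)/q}\rd t$ is finite; this is the same integrability that drives Lemma~\ref{lem:embedding}, and can be verified by writing $\varphi^{(q-1)/q}=\varphi\cdot\varphi^{-1/q}$, invoking $\varphi\geq c_\varepsilon\Phi^{1+\varepsilon}$ on $(-\infty,0]$ (and its symmetric counterpart on $[0,\infty)$), and changing variables to $y=\Phi(x)$, which gives a convergent integral $\int_0^{1/2} y^{-(1+\varepsilon)(1-\alpha)}\rd y$ once $\varepsilon$ is chosen small relative to $q-1$. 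Since $P_j g$ does not depend on $x_j$ and $\sup_{x_j}\varphi(x_j)=\varphi_\infty$, the $j$-th weighted-supremum factor collapses, so that $\|P_j g\|_{L^{q,\infty}(\real^s,\varphi)}\leq \varphi_\infty^{1/q}I_q\|g\|_{L^{q,\infty}(\real^s,\varphi)}$. Iterating in the derivative identity finishes the second bound. The only genuinely delicate step is this boundedness of $P_j$ on $L^{q,\infty}(\real^s,\varphi)$, for which both $q>1$ and the lower-tail condition of Assumption~\ref{assump:rho} are essential; the $L^q$ part needs neither, because $P_j$ is automatically a contraction there.
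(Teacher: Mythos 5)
Your proof is correct and follows essentially the same route as the paper: both reduce the claim to boundedness of $P_j$ on $L^q(\real^s,\varphi)$ (Jensen's inequality makes it a contraction) and on $L^{q,\infty}(\real^s,\varphi)$ (pointwise bound plus finiteness of $\int_\real\varphi^{1-1/q}$, which needs $q>1$ and the lower-tail condition of Assumption~\ref{assump:rho}). The only cosmetic difference is that you expand $f_v$ by inclusion--exclusion and apply the triangle inequality term-by-term, whereas the paper shows $P_j$ and $I-P_j$ are bounded operators on $W^{1,q}_{\mix}$ and $W^{1,q,\infty}_{\mix}$ directly and argues by composition; these are the same estimate packaged slightly differently.
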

\begin{proof}
    See the appendix.
\end{proof}

\subsection{Scrambled $(t,m,s)$-nets integration}\label{subsec:walsh} 
In this paper, we use scrambled $(t,m,s)$-nets in base $b\ge 2$ as the quadrature points in the estimator \eqref{eqn:muhat} as defined in the following.

\begin{definition}
For $t,\ m\in\natu_0$ and an integer $b\ge 2$, a  set $\cp:=\{\bsu_0,\dots,\bsu_{b^m-1}\}$ in $[0,1)^s$ is called
a $(t,m,s)$-net in base $b$ if every interval of the form $\prod_{j=1}^s\left[\frac{a_j}{b^{k_j}},\frac{a_j+1}{b^{k_j}}\right)$ contains exactly $b^t$ points of $\cp$ for all integers $a_j\in[0,b^{k_j})$ and all $k_j\in \natu_0$ satisfying $\sum_{j=1}^sk_j=m-t$. For $\emptyset \neq w\subseteq 1{:}s$, the projection of $\cp$ on  coordinates $j\in w$ forms a $(t_w,m,|w|)$-net in base $b$, where $t_\omega\in \natu$ is called $t$-quality parameter and $t_{w}=t$ when $w=1{:}s$. When performing Owen's scrambling \cite{rtms} on $\cp$, the resulting point set is called the scrambled $(t,m,s)$-net, which is also a $(t,m,s)$-net with probability one.
\end{definition}

\begin{remark}
As shown in \cite{c812a4a5-75ac-3614-936d-5782e44941d0}, for the Sobol' sequence \cite{sobol67} and the Niederreiter sequence \cite{NIEDERREITER198851}, the $t$-quality parameters $t_w$ satisfies that
\begin{equation}\label{eqn:tutj}
    t_\omega\leq \sum_{j\in \omega}t_j
\end{equation}
with $t_j= O(\log_b(j))$.
\end{remark}


Let $b$ be a prime number from now on. For $k\in \natu_0$ with $b$-adic expansion $k=\sum_{i=1}^r \kappa_i b^{i-1}$, the $k$-th $b$-adic Walsh function is given by
\begin{equation}\label{eqn:walk}
    {}_b\walk (u)=\exp\left(\frac{2\pi \ii}{b}\sum_{i=1}^r \kappa_i v_i\right),
\end{equation}
where $\ii=\sqrt{-1}$,  $v_1,\dots,v_{r}$ is given by the $b$-adic expansion $u=\sum_{i=1}^\infty v_i b^{-i}\in [0,1)$. For the multivariate case, the Walsh functions are defined by
$$
{}_b\walbsk (\bsu) :=\prod_{j=1}^s {}_b\walkj(u_j),\quad \bsk\in \natu_0^s.
$$
It is already known that $\{{}_b\walbsk (\bsu)\mid \bsk\in \natu_0^s\}$ forms an orthonormal basis of $L^2(\itr^s)$; see \cite{dick:pill:2010} for example. For $f\in L^2(\itr^s)$, we  have the Walsh series expanding
$$
f(\bsu)\sim \sum_{\bsk\in\natu^s} \hat{f}(\bsk){}_b\walbsk (\bsu),
$$
where $f\sim g$ denotes the equivalence relation for the $L^2(\itr^s)$ space, and $$\hat{f}(\bsk):=\int_{\itr^s}f(\bsu)\overline{{}_b\walbsk (\bsu)} \rd \bsu$$ denotes the $\bsk$-th Walsh coefficient of $f$. Let
$$\sigma^2_{\bsell}=\sum_{\bsk\in L_{\bsell}} |\hat{f}(\bsk)|^2,$$
where 
\begin{equation}\label{eqn:Ll}
    L_{\bsell}=\{\bsk\in \natu_0^s\mid \supp(\bsk)=\supp(\bsell), b^{\ell_j-1} \leq k_j < b^{\ell_j} \ \forall j\in \supp(\bsell)\}.
\end{equation}
The next lemma provides a useful upper bound for  the scrambled net variance.

\begin{lemma}\label{lem:qmcvar}
For $f\in L^2(\itr^s)$ and $\{\bsu_0,\dots,\bsu_{n-1}\}$ a scrambled  $(t,m,s)$-net in base $b\ge 2$ with $n=b^m$,
\begin{equation}\label{eqn:RQMCvar}
    \e\left|\frac{1}{n}\sum_{i=0}^{n-1} f(\bsu_i)-\int_{\itr^s} f(\bsu)\rd \bsu\right|^2=\frac{1}{n}\sum_{\emptyset\neq \omega \subseteq 1{:}s}\sum_{\bsell\in \natu^\omega} \Gamma_{\omega,\bsell} \sigma^2_{\bsell},
\end{equation}
    where $\natu^{v}=\{\bsell\in \natu^s_0\mid \supp(\bsell)=v\}$ and 
    \begin{equation}\label{eqn:gaincoefbound}
        \Gamma_{\omega,\bsell}\leq \Big(\frac{b}{b-1}\Big)^{|\omega|-1}b^{t_\omega}\bsone\{\Vert \bsell\Vert_1> m-t_\omega-|\omega|\}.
    \end{equation}
\end{lemma}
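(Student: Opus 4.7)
The plan is to exploit the Walsh basis expansion $f\sim \sum_{\bsk\in\natu_0^s}\hat f(\bsk)\,{}_b\walbsk(\bsu)$ together with the orthogonality properties of Owen's scrambling. Specifically, (i) each individual point $\bsu_i$ is uniformly distributed on $\itr^s$, so $\e[{}_b\walbsk(\bsu_i)]=0$ for $\bsk\neq\bszero$, and (ii) distinct non-zero Walsh indices are uncorrelated in expectation: $\e[{}_b\walbsk(\bsu_i)\,\overline{{}_b\mathrm{wal}_{\bsk'}(\bsu_j)}]=0$ whenever $\bsk\neq \bsk'$. These two facts together imply the spectral decomposition
$$\e\Big|\frac{1}{n}\sum_{i=0}^{n-1} f(\bsu_i)-\mu\Big|^2=\sum_{\bsk\neq\bszero}|\hat f(\bsk)|^2\,\Gamma_{\bsk},\qquad \Gamma_{\bsk}:=\e\Big|\frac{1}{n}\sum_{i=0}^{n-1} {}_b\walbsk(\bsu_i)\Big|^2.$$
For any $\bsk\in L_{\bsell}$, $\Gamma_{\bsk}$ depends only on $\bsell$ and the support $\omega=\supp(\bsell)$, so after setting $\Gamma_{\omega,\bsell}:=n\,\Gamma_{\bsk}$ and regrouping indices by level, the identity \eqref{eqn:RQMCvar} follows.

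For the bound \eqref{eqn:gaincoefbound} I would argue in two steps. First, projecting $\{\bsu_0,\dots,\bsu_{n-1}\}$ onto the coordinates in $\omega$ gives a $(t_\omega,m,|\omega|)$-net in base $b$, and each Walsh function ${}_b\walbsk$ with $\bsk\in L_{\bsell}$ is constant on every elementary interval of product volume $b^{-\Vert\bsell\Vert_1}$. Whenever $\Vert\bsell\Vert_1\le m-t_\omega-|\omega|$, the net-equidistribution then forces $\sum_i {}_b\walbsk(\bsu_i)=0$ identically, producing the indicator factor in \eqref{eqn:gaincoefbound}. Second, for the remaining levels with $\Vert\bsell\Vert_1>m-t_\omega-|\omega|$, I would invoke Owen's explicit per-coordinate upper bound on scrambled-net gain coefficients \cite{rtms}, which yields $n\Gamma_{\bsk}\le (b/(b-1))^{|\omega|-1} b^{t_\omega}$ uniformly over $\bsk\in L_{\bsell}$.

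The main obstacle is making the reduction to the $\omega$-projection rigorous: one must verify that Owen's scrambling of the original $(t,m,s)$-net, when restricted to the coordinates in $\omega$, acts exactly as an Owen scrambling of the projected $(t_\omega,m,|\omega|)$-net, so that the multivariate gain coefficient $\Gamma_{\bsk}$ is indeed controlled by the $|\omega|$-dimensional analogue rather than the cruder $s$-dimensional one. This is where the coordinate-wise independence of Owen's nested digit permutations becomes essential, and it is what permits the factor $(b/(b-1))^{|\omega|-1}$ in place of $(b/(b-1))^{s-1}$. Once this reduction is in place, the remainder is the standard one-coordinate-at-a-time moment estimate for Walsh functions under a scrambled net.
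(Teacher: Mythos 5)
The paper does not prove this lemma from scratch: it delegates Equation~\eqref{eqn:RQMCvar} to Owen~(1996)/\cite{snxs} (Haar form) and \cite[Theorem~13.6]{dick:pill:2010} (Walsh form), and Inequality~\eqref{eqn:gaincoefbound} to \cite{GODA2023101722}. Your proposal instead reconstructs the underlying argument, so the two routes differ in character rather than in substance, and your reconstruction is broadly on target. A few points worth flagging, since they are where the real content of the cited results lives.

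Your fact~(ii) --- that $\e\bigl[{}_b\walbsk(\bsu_i)\,\overline{{}_b\mathrm{wal}_{\bsk'}(\bsu_j)}\bigr]=0$ for all $\bsk\neq\bsk'$ under Owen scrambling, together with the fact that the surviving diagonal coefficient $\Gamma_{\bsk}$ depends on $\bsk$ only through $\bsell(\bsk)$ --- is exactly the technical heart of the variance identity, and you state it as if it were an obvious orthogonality. It is true, but it requires unwinding the nested-permutation structure: one must factor $\e\bigl[\walbsk(\bsu_i)\overline{\walkappa{\bsk'}(\bsu_j)}\bigr]$ over digit positions and coordinates using the mutual independence of Owen's branch permutations, and then observe that a digit position where $\bsk$ and $\bsk'$ disagree contributes a vanishing character sum. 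The companion claim, that $\Gamma_\bsk$ is constant on each $L_\bsell$, needs the same analysis and is why the variance collapses onto $\sigma^2_{\bsell}$ rather than individual $|\hat f(\bsk)|^2$'s. In a non-sketch proof these steps cannot simply be asserted.

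On the bound~\eqref{eqn:gaincoefbound}: your first step (projection to $\omega$ gives a $(t_\omega,m,|\omega|)$-net; Walsh characters in $L_\bsell$ are constant on elementary boxes of volume $b^{-\|\bsell\|_1}$; equidistribution kills the sum when those boxes are sufficiently fine) is correct and in fact yields the stronger threshold $\bsone\{\|\bsell\|_1>m-t_\omega\}$, which trivially implies the stated indicator with $m-t_\omega-|\omega|$. (You should also note that this requires scrambling to preserve the net property with probability one, which holds.) The reduction to the $\omega$-projection that you single out as the main obstacle is indeed clean precisely because Owen's scrambling acts coordinate-wise and independently, so the marginal of the scramble on $\omega$-coordinates is an Owen scramble of the projected net. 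One small attribution error: the explicit prefactor $(b/(b-1))^{|\omega|-1}b^{t_\omega}$ is not Owen's original constant; Owen's 1997--98 bounds are coarser in $|\omega|$, and this sharpened form is due to \cite{GODA2023101722}, which is what the paper actually cites. The sketch itself, however, is sound.
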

\begin{proof}
    Equation~\eqref{eqn:RQMCvar} is first derived by \cite{owen96,snxs} in terms of Haar wavelet basis. See \cite[Theorem 13.6]{dick:pill:2010} for the version using Walsh basis. Inequality~\eqref{eqn:gaincoefbound} comes from \cite{GODA2023101722}.
\end{proof}

\section{Generalized Fourier coefficients and their bounds}\label{sec:bounds}

Recall that our estimator $\hat{\mu}$ is given by Equation~\eqref{eqn:muhat}, where the integrand is $f^w(\bsu)=w(\bsu) f\circ T(\bsu)$.
A crucial step in analyzing the error of $\hat{\mu}$ is to bound the generalized Fourier coefficients of $f^w$. Specifically, we consider a complete orthonormal basis $\{\phi_k(u)\mid k\in \natu_0\}$ of $L^2(\itr)$ with $\phi_0(u)=1$. We additionally assume every $\phi_k \in L^\infty(\itr)$. Examples are trigonometric functions used in the analysis of lattice rules \cite{dick2022lattice} and Walsh functions introduced in Subsection~\ref{subsec:walsh}. Given $f\in L^1(\itr^s)$, we define the generalized Fourier coefficients
$$\hat{f}(\bsk)=\int_{\itr^s} f(\bsu) \prod_{j=1}^s \overline{\phi_{k_j}(u_j)}\rd \bsu =\int_{\itr^s} f(\bsu)\prod_{j\in \supp(\bsk)} \overline{\phi_{k_j}(u_j)}\rd \bsu$$
for $\bsk=(k_1,\dots,k_s)\in \natu^s_0$.
Notice that $\{\hat{f}(\bsk)\mid \bsk \in \natu^s_0\}$ is not square-summable if $f\notin L^2(\itr^s)$.

 The next lemma shows $\hat{f}^w(\bsk)$ only depends on the ANOVA components $f_v$ with $v\subseteq \supp(\bsk)$.

\begin{lemma}\label{lem:fwk}
    For $f\in L^q(\real^s,\varphi)$ and $w_j=w_{\theta_j}$ defined by Equation~\eqref{eqn:wtheta} with  $\{\theta_j\mid j\in 1{:}s\}\subseteq (0,1/2]$, 
    \begin{equation}\label{eqn:fwLqbound}
        \Vert f^w\Vert_{L^q(\itr^s)}\leq 2^{\frac{q-1}{q}s}\Vert f \Vert_{L^q(\real^s,\varphi)}
    \end{equation}
    and     
    $$\hat{f}^w(\bsk)=\sum_{v\subseteq\supp(\bsk)}\hat{f}^w_v(\bsk_v)\prod_{j\in \supp(\bsk)\setminus v} \hat{w}_j(k_j),$$
where
   $$f^w_v(\bsu_v)=f_v\circ T(\bsu) \prod_{j\in v} w_j(u_j)$$
    with $f_v\circ T(\bsu)$ interpreted as a function of $\bsu_v$ by removing redundant variables $\bsu_{v^c}$.
\end{lemma}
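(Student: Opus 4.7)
The plan has two parts, corresponding to the two claims. For the $L^q$ bound in \eqref{eqn:fwLqbound}, I would exploit the construction in \eqref{eqn:wtheta}: since $\theta_j\leq 1/2$, each weight satisfies $w_j(u)\leq (1-\theta_j)^{-1}\leq 2$, so $w_j(u)^q\leq 2^{q-1}w_j(u)$. Combining with the defining identity $T_j'(u)\,\varphi\circ T_j(u)=w_j(u)$, the change of variables $x_j=T_j(u_j)$ yields
\begin{equation*}
\int_{\itr^s} w(\bsu)^q \,|f\circ T(\bsu)|^q\rd \bsu \;\leq\; 2^{(q-1)s}\int_{\itr^s} w(\bsu)\,|f\circ T(\bsu)|^q\rd\bsu \;=\; 2^{(q-1)s}\Vert f\Vert_{L^q(\real^s,\varphi)}^q,
\end{equation*}
from which the bound follows by taking $q$-th roots. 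The same change of variables (with $q=1$) shows $f^w\in L^1(\itr^s)$ so that the generalized Fourier coefficients are well-defined.

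For the decomposition of $\hat f^w(\bsk)$, I would substitute the ANOVA expansion \eqref{eqn:anovadecomp} into $f^w$ to obtain $f^w(\bsu)=\sum_{v\subseteq 1{:}s} f_v\circ T(\bsu)\prod_{j=1}^s w_j(u_j)$, then split the weight product into its $v$-factors (which together with $f_v\circ T$ form $f^w_v(\bsu_v)$) and its $v^c$-factors. Because $f_v\circ T(\bsu)$ depends only on $\bsu_v$, Fubini factorizes the integral defining $\hat f^w(\bsk)$ into a $\bsu_v$-integral involving $f^w_v$ and the Fourier factors $\overline{\phi_{k_j}}$ for $j\in v\cap\supp(\bsk)$, and one-dimensional integrals over each $u_j$ with $j\in v^c$. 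For $j\in v^c\cap\supp(\bsk)$ this one-dimensional integral is exactly $\hat w_j(k_j)$, while for $j\in v^c\setminus\supp(\bsk)$ it is $\int_\itr w_j(u_j)\rd u_j=1$, so the $v^c$-indices collectively contribute $\prod_{j\in\supp(\bsk)\setminus v}\hat w_j(k_j)$.

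The main work is then to show that the $v$-sum collapses to $v\subseteq\supp(\bsk)$. For any $v\not\subseteq\supp(\bsk)$, pick $j^*\in v\setminus\supp(\bsk)$; in the corresponding summand the only $u_{j^*}$-dependence is through $w_{j^*}(u_{j^*})$ and $f_v\circ T$, with no Fourier factor since $k_{j^*}=0$. The change of variables converts the inner integral into $\int_\real f_v(\bsx)\varphi(x_{j^*})\rd x_{j^*}=P_{j^*}(f_v)$, which vanishes by \eqref{eqn:ANOVAPj}. Hence only $v\subseteq\supp(\bsk)$ survives, and for those terms the remaining $\bsu_v$-integral is precisely $\hat f^w_v(\bsk_v)$ (using $\phi_0=1$ to absorb indices $j\in v$ with $k_j=0$), yielding the stated formula. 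The principal obstacle is careful bookkeeping of the four-way partition of $1{:}s$ into $v\cap\supp(\bsk)$, $v\setminus\supp(\bsk)$, $v^c\cap\supp(\bsk)$, $v^c\setminus\supp(\bsk)$, and justifying Fubini at each step via the $L^1$-integrability established in the first part.
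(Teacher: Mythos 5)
Your proposal is correct and follows essentially the same route as the paper: the $L^q$ bound via $w_j(u)^q\le 2^{q-1}w_j(u)$ followed by the change of variables $x_j=T_j(u_j)$, and the coefficient decomposition via the ANOVA expansion, with the terms $v\not\subseteq\supp(\bsk)$ eliminated by choosing $j^*\in v\setminus\supp(\bsk)$ and observing that $\int_\itr f_v\circ T(\bsu)\,w_{j^*}(u_{j^*})\,\mathrm{d}u_{j^*}=\int_\real f_v(\bsx)\varphi(x_{j^*})\,\mathrm{d}x_{j^*}=0$ by \eqref{eqn:ANOVAPj}, and the $v^c$-indices contributing $\hat w_j(k_j)$ or $1$. (One cosmetic slip: your parenthetical about $\phi_0=1$ absorbing indices $j\in v$ with $k_j=0$ is vacuous once $v\subseteq\supp(\bsk)$; the relevant use of $\phi_0=1$ is for $j\notin\supp(\bsk)$.)
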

\begin{proof}
Because $T'_j(u)\geq 0$ and $\sup_{u\in\itr} w_j(u)= (1-\theta_j)^{-1}\leq 2$
\begin{align*}
    \Vert f^w\Vert^q_{L^q(\itr^s)}=&\int_{\itr^s} |f|^q\circ T(\bsu)\prod_{j=1}^s w_j(u_j)^q\rd\bsu\\
    \leq &\prod_{j=1}^s\Big(\sup_{u\in\itr} w_j(u)\Big)^{q-1}\int_{\itr^s}|f|^q\circ T(\bsu)\prod_{j=1}^s\varphi\circ T_j(u_j) T_j'(u_j)\rd\bsu\\
    \le &2^{(q-1)s} \int_{\real^s} |f(\bsx)|^q \prod_{j=1}^s \varphi(x_j)\rd \bsx=2^{(q-1)s} \Vert f \Vert^q_{L^q(\real^s,\varphi)}.
\end{align*}
Next, Equation~\eqref{eqn:ANOVAPj} implies for $j\in v$, 
$$\int_{\itr}  f_v \circ T(\bsu) w_j(u_j)\rd u_j=\int_{\itr} f_v \circ T(\bsu) \varphi\circ T_j(u_j) T_j'(u_j) \rd u_j=\int_\real f_v(\bsx) \varphi(x_j)\rd x_j=0.$$
Together with $\int_{0}^1 w_\theta(u)\rd u=1$, we have
\begin{align*}
    \hat{f}^w(\bsk)=& \sum_{v\subseteq 1{:}s} \int_{\itr^s} f_v\circ T(\bsu) \prod_{j=1}^s w_j(u_j) \prod_{j\in \supp(\bsk)} \overline{\phi_{k_j}(u_j)}\rd \bsu \\
    =& \sum_{v\subseteq\supp(\bsk)}\int_{\itr^{|v|}}  f_v\circ T(\bsu) \prod_{j\in v} w_j(u_j)\overline{\phi_{k_j}(u_j)}\rd \bsu_v\prod_{j\in \supp(\bsk)\setminus v} \int_\itr w_j(u_j)\overline{\phi_{k_j}(u_j)}\rd u_j\\
    =&\sum_{v\subseteq\supp(\bsk)}\hat{f}^w_v(\bsk_v)\prod_{j\in \supp(\bsk)\setminus v} \hat{w}_j(k_j).
\end{align*}
\end{proof}

Lemma~\ref{lem:fwk} suggests $|\hat{f}^w(\bsk)|$ is controlled by $|\hat{w}_j(k_j)|$ for $j\in \supp(\bsk)$ and $|\hat{f}^w_v(\bsk_v)|$ for $v\subseteq \supp(\bsk)$. The next lemma bounds the generalized Fourier coefficients of $w_j=w_{\theta_j}$.

\begin{lemma}\label{lem:wkbound}
For $k\in \natu$ and $w_\theta$ with $\theta\in (0,1/2]$,  
$$|\hat{w}_\theta(k)|\leq  4\min\big(\theta\Vert\phi_k\Vert_{L^\infty(\itr)}, \Vert \mathcal{I}(\phi_k)\Vert_{L^\infty(\itr)}\big).$$
\end{lemma}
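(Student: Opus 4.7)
The plan is to prove the two bounds separately, exploiting two different structural features of $w_\theta$: the fact that it equals the constant $(1-\theta)^{-1}$ on the bulk interval $[\theta,1-\theta]$, and the fact that it vanishes at the endpoints $u=0,1$. Before starting, I would record the basic properties of $w_\theta$ that will be used repeatedly: $w_\theta\ge 0$, $\sup_u w_\theta(u)=(1-\theta)^{-1}\le 2$, and (using that $\eta_p$ is monotonically increasing on $(0,1/2]$, which amounts to checking $\eta_p'(u)=\eta_p(u)[p u^{-p-1}-(p+1)u^{-1}]\ge 0$ on that interval, i.e.\ $(1/2)^p\le p/(p+1)$) $w_\theta$ is continuously differentiable on $(0,1)$, monotonically increases from $0$ to $(1-\theta)^{-1}$ on $[0,\theta]$, is constant on $[\theta,1-\theta]$, and decreases back to $0$ on $[1-\theta,1]$. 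In particular, the total variation equals $2(1-\theta)^{-1}\le 4$, so $\int_0^1|w_\theta'(u)|\,\mrd u\le 4$.

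For the first bound, I would exploit orthonormality. Since $\phi_0\equiv 1$ and the basis is orthonormal, $\int_0^1\overline{\phi_k(u)}\,\mrd u=0$ for $k\in\natu$. Hence
\begin{equation*}
\hat{w}_\theta(k)=\int_0^1\bigl[w_\theta(u)-(1-\theta)^{-1}\bigr]\overline{\phi_k(u)}\,\mrd u,
\end{equation*}
and the integrand vanishes on $[\theta,1-\theta]$ because $w_\theta$ equals the constant there. On the two boundary strips $(0,\theta)$ and $(1-\theta,1)$, which together have length $2\theta$, the factor $w_\theta(u)-(1-\theta)^{-1}$ is bounded in absolute value by $(1-\theta)^{-1}\le 2$. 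Combining these gives $|\hat{w}_\theta(k)|\le 2\cdot 2\theta\cdot\|\phi_k\|_{L^\infty(\itr)}=4\theta\|\phi_k\|_{L^\infty(\itr)}$.

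For the second bound, I would integrate by parts against $\mathcal{I}(\phi_k)$. Since $w_\theta(0)=w_\theta(1)=0$, the boundary terms vanish and
\begin{equation*}
\hat{w}_\theta(k)=-\int_0^1 w_\theta'(u)\,\overline{\mathcal{I}(\phi_k)(u)}\,\mrd u,
\end{equation*}
so $|\hat{w}_\theta(k)|\le\|\mathcal{I}(\phi_k)\|_{L^\infty(\itr)}\cdot\int_0^1|w_\theta'(u)|\,\mrd u$, and the monotonicity-based total variation bound $2(1-\theta)^{-1}\le 4$ completes this half. Taking the minimum of the two estimates yields the stated inequality.

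The only substantive obstacle is verifying regularity and monotonicity of $w_\theta$ so that integration by parts and the total variation computation are legitimate; in particular that $\eta_p$ is monotonically increasing on $(0,1/2]$ and that $w_\theta$ is continuously differentiable across the gluing points $u=\theta/2,\ \theta,\ 1/2$ (the check at $u=\theta$ uses $\eta_p(0)=\eta_p'(0)=0$, which follows from the $\exp(-u^{-p})$ factor in \eqref{eq:tildeetap}). Once these mild regularity facts are in hand, both bounds are short.
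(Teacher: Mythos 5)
Your proof is correct and follows essentially the same route as the paper's: the first bound via orthogonality against $\phi_0\equiv 1$ and the observation that the integrand is supported on two boundary strips of total length $2\theta$, and the second bound via integration by parts against $\mathcal{I}(\phi_k)$ followed by a total-variation estimate $\int_0^1|w_\theta'|=2(1-\theta)^{-1}\le 4$. The only minor cosmetic difference is that you justify the vanishing boundary terms using $w_\theta(0)=w_\theta(1)=0$ rather than $\mathcal{I}(\phi_k)(0)=\mathcal{I}(\phi_k)(1)=0$ as the paper does; both hold, so the argument is unchanged.
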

\begin{proof}
    Because $\phi_k(u)$ is orthogonal to $\phi_0(u)=1$,
    $$\mathcal{I}(\phi_k)(1)=\int_\itr \phi_k(u)\rd u =0.$$
    Since $0\leq w_\theta(u)\leq (1-\theta)^{-1}$,
    \begin{align*}
        |\hat{w}_\theta(k)|=&\Big|\int_\itr w_\theta(u) \overline{\phi_k(u)}\rd u\Big| = \Big|\int_\itr w_\theta(u) \phi_k(u)\rd u\Big|\\ =& \Big|\int_\itr \big(w_\theta(u)-(1-\theta)^{-1}\big) \phi_k(u)\rd u\Big|\\
        \leq & \Big|\int_0^\theta \big((1-\theta)^{-1}-w_\theta(u)\big) \phi_k(u)\rd u\Big| +  \Big|\int_{1-\theta}^1  \big((1-\theta)^{-1}-w_\theta(u)\big) \phi_k(u)\rd u\Big|\\
        \leq & 2\theta(1-\theta)^{-1}   \Vert\phi_k\Vert_{L^\infty(\itr)}.
    \end{align*}
    Next, we use integration by parts and $\mathcal{I}(\phi_k)(0)=\mathcal{I}(\phi_k)(1)=0$ to obtain
    \begin{align*}
|\hat{w}_\theta(k)|=&\Big|\int_\itr w_\theta(u) \phi_k(u)\rd u\Big|
=\Big|\int_\itr w'_\theta(u) \mathcal{I}(\phi_k)(u)\rd u\Big|\\
         \leq & \Big|\int_0^{\theta} w'_\theta(u) \mathcal{I}(\phi_k)(u)\rd u\Big|+\Big|\int_{1-\theta}^{1} w'_\theta(u) \mathcal{I}(\phi_k)(u)\rd u\Big|\\
         \leq &  2\Vert \mathcal{I}(\phi_k)\Vert_{L^\infty(\itr)}\int_0^{\theta} w'_\theta(u) \rd u\\
         =& 2 (1-\theta)^{-1} \Vert \mathcal{I}(\phi_k)\Vert_{L^\infty(\itr)}.
  \end{align*}
    The conclusion follows once we bound $(1-\theta)^{-1}\leq 2$.
\end{proof}

Bounds on $|\hat{f}^w_v(\bsk_v)|^2$ generally depend on the smoothness of $f^w_v$ and properties of the orthonormal basis. In this work, we use the following bound based on $\Vert\partial^{v}f_v^w \Vert_{L^2(\itr^{|v|})}$. Whether this norm is finite and how it depends on $\{\theta_j\mid j\in v\}$ is the subject of the next section.

\begin{lemma}\label{lem:fkbound}
Assume there exist $\lambda:\natu\to \mathbb{C}\setminus \{0\}$, $\mathcal{N}:\natu\to\natu$ and  $\mathcal{M}:\natu\to\natu_0$ such that
\begin{equation}\label{eqn:intphik}
    \mathcal{I}(\phi_k)=\lambda(k)\phi^*_{\mathcal{N}(k)}\phi_{\mathcal{M}(k)},
\end{equation}
where $\{\phi^*_{\ell}(u)\in L^\infty(\itr)\mid \ell\in \natu\}$ satisfies $\Vert\phi^*_{\ell}\Vert_{L^\infty(\itr)}=1$ for every $\ell\in \natu$. Then for $f\in W^{1,2}_{\mix}(\itr^s)$ and $\bsell\in \natu^s$,
    $$\sum_{\bsk\in \mathcal{N}_s^{-1}(\bsell) }  |\hat{f}(\bsk)|^2\prod_{j=1}^s |\lambda(k_j)|^{-2}\leq  \Vert \partial^{1{:}s}f \Vert^2_{L^2(\itr^s)},$$
    where $\mathcal{N}_s^{-1}(\bsell)=\{\bsk\in \natu^s\mid \mathcal{N}(k_j)=\ell_j, j\in 1{:}s\}.$
\end{lemma}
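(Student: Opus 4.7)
The proof will be a straightforward integration-by-parts plus Bessel-type argument. The plan is to reduce the Fourier coefficient $\hat{f}(\bsk)$ to an inner product involving $\partial^{1{:}s} f$ by integrating by parts once in every coordinate, then to sum over fibers of $\mathcal{N}_s$ by invoking Parseval/Bessel on the multivariate tensor basis $\{\prod_j \phi_{m_j}\}$.

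\textbf{Step 1 (integration by parts).} In one dimension, for $k\in\natu$ we have $\mathcal{I}(\phi_k)(0)=0$ by definition of $\mathcal{I}$, and $\mathcal{I}(\phi_k)(1)=\int_\itr \phi_k\rd u=0$ since $\phi_k$ is orthogonal to $\phi_0\equiv 1$; this was already noted in the proof of Lemma~\ref{lem:wkbound}. Thus, integrating by parts in each coordinate $u_j$ of the defining integral
\[
\hat{f}(\bsk)=\int_{\itr^s} f(\bsu)\prod_{j=1}^s\overline{\phi_{k_j}(u_j)}\rd \bsu
\]
produces no boundary contributions, and (after a Fubini argument which is justified by $f\in W^{1,2}_{\mix}(\itr^s)$ and $\overline{\mathcal{I}(\phi_{k_j})}\in L^\infty(\itr)$) yields
\[
\hat{f}(\bsk)=(-1)^s\int_{\itr^s}\partial^{1{:}s}f(\bsu)\prod_{j=1}^s\overline{\mathcal{I}(\phi_{k_j})(u_j)}\rd \bsu.
\]
Substituting the assumed identity $\mathcal{I}(\phi_{k_j})=\lambda(k_j)\phi^*_{\mathcal{N}(k_j)}\phi_{\mathcal{M}(k_j)}$ and pulling the constants outside gives
\[
\hat{f}(\bsk)=(-1)^s\Big(\prod_{j=1}^s\overline{\lambda(k_j)}\Big)\Big\langle\partial^{1{:}s}f\cdot\prod_{j=1}^s\overline{\phi^*_{\mathcal{N}(k_j)}(u_j)},\ \prod_{j=1}^s\phi_{\mathcal{M}(k_j)}(u_j)\Big\rangle_{L^2(\itr^s)}.
\]

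\textbf{Step 2 (Bessel's inequality on each fiber).} Fix $\bsell\in\natu^s$. For any $\bsk\in\mathcal{N}_s^{-1}(\bsell)$ the function $h_{\bsell}(\bsu):=\partial^{1{:}s}f(\bsu)\prod_{j=1}^s\overline{\phi^*_{\ell_j}(u_j)}$ does not depend on $\bsk$, and the hypothesis $\Vert\phi^*_{\ell_j}\Vert_{L^\infty(\itr)}=1$ gives $\Vert h_{\bsell}\Vert_{L^2(\itr^s)}\le\Vert\partial^{1{:}s}f\Vert_{L^2(\itr^s)}$. Dividing the displayed identity by $\prod_j\lambda(k_j)$, taking moduli squared, and summing over $\bsk\in\mathcal{N}_s^{-1}(\bsell)$ yields
\[
\sum_{\bsk\in\mathcal{N}_s^{-1}(\bsell)}|\hat{f}(\bsk)|^2\prod_{j=1}^s|\lambda(k_j)|^{-2}=\sum_{\bsk\in\mathcal{N}_s^{-1}(\bsell)}\Big|\Big\langle h_{\bsell},\ \prod_{j=1}^s\phi_{\mathcal{M}(k_j)}\Big\rangle_{L^2(\itr^s)}\Big|^2,
\]
and Bessel's inequality for the orthonormal tensor basis $\{\prod_{j=1}^s\phi_{m_j}\mid \bsm\in\natu_0^s\}$ bounds the right-hand side by $\Vert h_{\bsell}\Vert_{L^2(\itr^s)}^2\le\Vert\partial^{1{:}s}f\Vert_{L^2(\itr^s)}^2$, as required.

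\textbf{Main obstacle.} The only non-obvious point is the applicability of Bessel's inequality: the sum on the right must genuinely be indexed by distinct basis elements, which requires the coordinatewise map $k\mapsto\mathcal{M}(k)$ to be injective on each fiber $\mathcal{N}^{-1}(\ell)$. In the intended Walsh-basis application (and in the trigonometric case), this injectivity is built into the explicit integration formulas for $\mathcal{I}(\phi_k)$; I will record it as an implicit structural consequence of the assumed decomposition so that the Bessel step is legal. Once this is in place the rest of the argument is mechanical, and no further hypotheses on $f$ beyond $W^{1,2}_{\mix}(\itr^s)$ are required.
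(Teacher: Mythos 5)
Your structure is exactly the paper's: integrate by parts in every coordinate using $\mathcal{I}(\phi_k)(0)=\mathcal{I}(\phi_k)(1)=0$, substitute the decomposition \eqref{eqn:intphik}, observe that $\prod_j\phi^*_{\ell_j}$ is common to the whole fiber $\mathcal{N}_s^{-1}(\bsell)$ and absorb it into an $L^\infty$-bounded multiplier of $\partial^{1{:}s}f$, then apply Bessel's inequality for the tensor basis indexed by $\bsk\mapsto(\mathcal{M}(k_1),\dots,\mathcal{M}(k_s))$. The $(-1)^s$ you carried (the paper omits it) is immaterial under $|\cdot|^2$.

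The step you explicitly set aside, however, is not optional and cannot simply be ``recorded as an implicit structural consequence.'' Bessel's inequality requires the terms in your sum to be inner products against \emph{distinct} members of the orthonormal system, i.e.\ the coordinatewise map $k\mapsto\mathcal{M}(k)$ must be injective on each fiber $\mathcal{N}^{-1}(\ell)$. This is a genuine claim that needs a proof, and fortunately it follows from \eqref{eqn:intphik} in one line, which the paper supplies and you should insert: if $k,k'\in\mathcal{N}^{-1}(\ell)$ and $\mathcal{M}(k)=\mathcal{M}(k')$, then
\[
\mathcal{I}(\phi_{k})=\lambda(k)\phi^*_{\ell}\phi_{\mathcal{M}(k)}
=\frac{\lambda(k)}{\lambda(k')}\,\lambda(k')\phi^*_{\ell}\phi_{\mathcal{M}(k')}
=\frac{\lambda(k)}{\lambda(k')}\,\mathcal{I}(\phi_{k'}),
\]
and since $\lambda$ is never zero, differentiating shows $\phi_{k}$ is a nonzero scalar multiple of $\phi_{k'}$, which by orthonormality of $\{\phi_k\}$ forces $k=k'$. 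With that inserted your proof is complete and coincides with the paper's.
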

\begin{proof}
    Since $\mathcal{I}(\phi_{k_1})(0)=\mathcal{I}(\phi_{k_1})(1)=0$, we can use integration by parts for weak derivatives \cite[Chapter 4]{nikol2012approximation} and derive
    $$\hat{f}(\bsk)=\int_{\itr^s} f(\bsu) \prod_{j=1}^s \overline{\phi_{k_j}(u_j)}\rd \bsu =\int_{\itr^s}  \partial^{1} f(\bsu) \overline{\mathcal{I}(\phi_{k_1})(u_1)} \prod_{j=2}^s \overline{\phi_{k_j}(u_j)}\rd \bsu.$$
    After repeating the above calculation for $u_2,\dots,u_s$, we get
    \begin{align*}
        \hat{f}(\bsk)=&\int_{\itr^s} \partial^{1{:}s}f(\bsu) \prod_{j=1}^s \overline{\mathcal{I}(\phi_{k_j})(u_j)}\rd \bsu  \\
        = &\int_{\itr^s} \partial^{1{:}s}f(\bsu) \prod_{j=1}^s \overline{\lambda(k_j)\phi^*_{\mathcal{N}(k_j)}(u_j)\phi_{\mathcal{M}(k_j)}(u_j)}\rd \bsu.
    \end{align*}
    Over $\bsk\in \mathcal{N}_s^{-1}(\bsell)$, $\mathcal{N}(k_j)=\ell_j$. Moreover, if $\mathcal{M}(k_j)=\mathcal{M}(k'_j)$ for $k_j,k'_j\in \mathcal{N}^{-1}(\ell_j)$,  
    $$\mathcal{I}(\phi_{k_j})=\lambda(k_j)\phi^*_{\ell_j}\phi_{\mathcal{M}(k_j)}=\frac{\lambda(k_j)}{\lambda(k'_j)}\lambda(k'_j)\phi^*_{\ell_j}\phi_{\mathcal{M}(k'_j)}=\frac{\lambda(k_j)}{\lambda(k'_j)}\mathcal{I}(\phi_{k'_j}),$$
    which is true only if $k_j=k'_j$. Therefore, the mapping $\bsk\to (\mathcal{M}(k_1),\dots,\mathcal{M}(k_s))$ is injective over $\mathcal{N}_s^{-1}(\bsell)$ and
    \begin{align*}
       &\sum_{\bsk\in \mathcal{N}_s^{-1}(\bsell) }  |\hat{f}(\bsk)|^2\prod_{j=1}^s |\lambda(k_j)|^{-2}\\
       \leq & \sum_{\bsk\in \mathcal{N}_s^{-1}(\bsell) }  \Big|\int_{\itr^s} \Big(\partial^{1{:}s}f(\bsu) \prod_{j=1}^s \overline{\phi^*_{\ell_j}(u_j)}\Big) \prod_{j=1}^s\overline{\phi_{\mathcal{M}(k_j)}(u_j)}\rd \bsu\Big|^2\\
        \leq & \int_{\itr^s} \Big|\partial^{1{:}s}f(\bsu) \prod_{j=1}^s \overline{\phi^*_{\ell_j}(u_j)}\Big|^2\rd \bsu
        \leq  \Vert \partial^{1{:}s}f \Vert^2_{L^2(\itr^s)},
    \end{align*}
    where in the second inequality we have applied Bessel's inequality for the orthonormal basis $\{\prod_{j=1}^s\phi_{k_j}(u_j)\mid \bsk\in \natu^s_0\}$.
\end{proof}

\begin{example}\label{example}
    To illustrate, we prove Equation~\eqref{eqn:intphik} holds for  $b$-adic  Walsh functions. Let 
    $\phi_k(u)={}_b\walk (u)$ defined by Equation~\eqref{eqn:walk}.  For $k\in \natu$ satisfying $b^{r-1}\leq k<b^r$, because $u_1,\dots,u_{r-1}$ in the $b$-adic expansion $u=\sum_{i=1}^\infty u_i b^{-i}$ is constant over the interval $\itr_a=[a b^{-r+1},(a+1)b^{-r+1})$ with an integer $a\in [0,b^{r-1}-1]$, 
    \begin{align*}
        \int_{\itr_a} \phi_k(u) \rd u=\exp\Big(\frac{2\pi \ii}{b}\sum_{i=1}^{r-1} \kappa_i u_i\Big)\int_{\itr_a} \exp\Big(\frac{2\pi \ii}{b} \kappa_r u_{r}\Big) \rd u=0,
    \end{align*}
    where the sum from $1$ to $r-1$ is set to $0$ if $r=1$. Hence for $u\in \itr_a$, 
    \begin{align*}
       \mathcal{I}(\phi_k)(u)=
       &\exp\Big(\frac{2\pi \ii}{b}\sum_{i=1}^{r-1} \kappa_i u_i\Big) \int_{a b^{-r+1}}^{u}  \exp\Big(\frac{2\pi \ii}{b} \kappa_r u'_{r}\Big) \rd u'\\
       =&\exp\Big(\frac{2\pi \ii}{b}\sum_{i=1}^{r-1} \kappa_i u_i\Big) \int_{0}^{u}  \exp\Big(\frac{2\pi \ii}{b} \kappa_r u'_{r}\Big) \rd u'\\
       =&\phi_{\mathcal{M}(k)}(u)\mathcal{I}(\phi_{\mathcal{N}(k)})(u), 
    \end{align*}
    where $\mathcal{M}(k)=k-\kappa_r b^{r-1}$ and $\mathcal{N}(k)=\kappa_r b^{r-1}$. Equation~\eqref{eqn:intphik} follows after we write $\mathcal{I}(\phi_{\mathcal{N}(k)})=\lambda(k) \phi^*_{\mathcal{N}(k)}$ with 
    $$\lambda(k)=\Vert\mathcal{I}(\phi_{\mathcal{N}(k)})\Vert_{L^\infty(\itr)}=\sup_{u\in\itr}\Big|\int_{0}^{u}  \exp\Big(\frac{2\pi \ii}{b} \kappa_r u'_{r}\Big) \rd u'\Big|\leq b^{-r+1}.$$
\end{example}

Lemmas~\ref{lem:fwk}-\ref{lem:fkbound} together give the following bound on $|\hat{f}^w(\bsk)|^2$.
\begin{theorem}\label{thm:fkbound}
    Suppose $\{\phi_k(u)\mid k\in \natu_0\}$ satisfies Equation~\eqref{eqn:intphik} and 
    $$\sup_{k\in \natu_0} \Vert \phi_k\Vert_{L^\infty(\itr)} =C_\phi<\infty.$$
    Then for $\alpha\in (0,1)$, $\bsell\in \natu_0^{s}$ and $f^w\in W^{1,2}_{\mix}(\itr^s)$ with $w_j=w_{\theta_j}$ for $\{\theta_j\mid j\in 1{:}s\}\subseteq (0,1/2]$,
    \begin{multline}\label{eqn:fwkbound}
        \sum_{\bsk\in \mathcal{E}(\bsell) } |\hat{f}^w(\bsk)|^2\prod_{j\in\supp(\bsell)} |\lambda(k_j)|^{-2\alpha}\\
        \leq 2^{|\supp(\bsell)|}\sum_{v\subseteq\supp(\bsell)}\Vert \partial^{v}f^w_v \Vert^{2\alpha}_{L^2(\itr^{|v|})}\Vert f^w_v \Vert^{2-2\alpha}_{L^2(\itr^{|v|})} \prod_{j\in \supp(\bsell)\setminus v} \mathcal{S}(\theta_j,\ell_j,\alpha),
    \end{multline}
    where 
    $$\mathcal{E}(\bsell)=\Big\{\bsk\in \natu_0^{s}\mid \supp(\bsk)=\supp(\bsell), \mathcal{N}(k_j)=\ell_j \ \forall j\in \supp(\bsell)\Big\},$$
    $$\mathcal{S}(\theta,\ell,\alpha)=16C^2_\phi\sum_{k\in\mathcal{N}^{-1}(\ell) } \min\big(\theta^2 , |\lambda(k)|^{2}\big)|\lambda(k)|^{-2\alpha}.$$
\end{theorem}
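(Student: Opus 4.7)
The plan is to combine Lemma~\ref{lem:fwk}, Lemma~\ref{lem:wkbound}, and Lemma~\ref{lem:fkbound}, inserting H\"older's inequality to interpolate the exponent $\alpha\in(0,1)$ between the trivial endpoint $\alpha=0$ (Parseval) and the smoothness endpoint $\alpha=1$ (Lemma~\ref{lem:fkbound}).

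First, I will start from the decomposition in Lemma~\ref{lem:fwk}, which writes $\hat{f}^w(\bsk)$ for $\bsk\in\mathcal{E}(\bsell)$ as a sum of at most $2^{|\supp(\bsell)|}$ terms indexed by $v\subseteq\supp(\bsell)$, and apply the elementary estimate $\bigl|\sum_{v} a_v\bigr|^2\le 2^{|\supp(\bsell)|}\sum_{v}|a_v|^2$, which produces the outer factor $2^{|\supp(\bsell)|}$. The resulting sum over $\bsk\in\mathcal{E}(\bsell)$ factorizes into independent sums over the coordinates in $v$ and those in $\supp(\bsell)\setminus v$, because $\mathcal{N}:\natu\to\natu$ constrains each $k_j$ individually and $\mathcal{N}(k_j)=\ell_j\neq 0$ forces $k_j\neq 0$. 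For each $j\in\supp(\bsell)\setminus v$, I apply Lemma~\ref{lem:wkbound} together with the bound $\Vert\mathcal{I}(\phi_k)\Vert_{L^\infty(\itr)}\le|\lambda(k)|\,C_\phi$, implied by \eqref{eqn:intphik} and the assumption $\Vert\phi^*_{\ell}\Vert_{L^\infty(\itr)}=1$, to get $|\hat{w}_j(k)|\le 4C_\phi\min(\theta_j,|\lambda(k)|)$; summing over $k\in\mathcal{N}^{-1}(\ell_j)$ with the weight $|\lambda(k)|^{-2\alpha}$ then reproduces $\mathcal{S}(\theta_j,\ell_j,\alpha)$ exactly.

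For the remaining inner sum $\sum_{\bsk_v}|\hat{f}^w_v(\bsk_v)|^2\prod_{j\in v}|\lambda(k_j)|^{-2\alpha}$ taken over $\bsk_v$ satisfying $\mathcal{N}(k_j)=\ell_j$ for $j\in v$, I apply H\"older's inequality with conjugate exponents $1/\alpha$ and $1/(1-\alpha)$, writing the summand as $\bigl(|\hat{f}^w_v(\bsk_v)|^2\prod_{j\in v}|\lambda(k_j)|^{-2}\bigr)^\alpha\cdot\bigl(|\hat{f}^w_v(\bsk_v)|^2\bigr)^{1-\alpha}$. Lemma~\ref{lem:fkbound}, applied to $f^w_v$ as a function on $\itr^{|v|}$, bounds the first resulting factor by $\Vert\partial^v f^w_v\Vert^{2\alpha}_{L^2(\itr^{|v|})}$, and Parseval's identity bounds the second by $\Vert f^w_v\Vert^{2-2\alpha}_{L^2(\itr^{|v|})}$. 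Assembling these estimates across all $v\subseteq\supp(\bsell)$ yields the claimed inequality. The main delicate point is this interpolation: applying Lemma~\ref{lem:fkbound} directly would force the weight $\prod|\lambda(k_j)|^{-2}$ and deliver only the endpoint $\alpha=1$, whereas H\"older's inequality lets one trade part of that weight for a bare $L^2$ norm controlled by Parseval, producing exactly the intermediate exponents needed for the subsequent error analysis of scrambled nets.
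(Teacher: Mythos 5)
Your proposal is correct and follows essentially the same route as the paper's own proof: decompose via Lemma~\ref{lem:fwk}, absorb the cross terms into the $2^{|\supp(\bsell)|}$ factor, factorize the sum over $\mathcal{E}(\bsell)$, bound the $\hat{w}_j$ factors using Lemma~\ref{lem:wkbound} together with $\Vert\mathcal{I}(\phi_k)\Vert_{L^\infty}\le C_\phi|\lambda(k)|$, and treat the $\hat{f}^w_v$ factors by H\"older's inequality interpolating between Lemma~\ref{lem:fkbound} and the $L^2$ bound. The only slip is cosmetic: the $\alpha=0$ endpoint is Bessel's inequality (the sum runs over a proper subset of indices), not the full Parseval identity.
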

\begin{proof}
By Lemma~\ref{lem:fwk}, for every $\bsk\in \mathcal{E}(\bsell)$
    $$|\hat{f}^w(\bsk)|^2\leq 2^{|\supp(\bsell)|}\sum_{v\subseteq\supp(\bsell)}|\hat{f}^w_v(\bsk_v)|^2\prod_{j\in \supp(\bsell)\setminus v} |\hat{w}_j(k_j)|^2.$$
    Therefore,
    \begin{align*}
        &\sum_{\bsk\in \mathcal{E}(\bsell) } |\hat{f}^w(\bsk)|^2\prod_{j\in\supp(\bsell)} |\lambda(k_j)|^{-2\alpha}\\
        \leq & \sum_{\bsk\in \mathcal{E}(\bsell) } 2^{|\supp(\bsell)|}\sum_{v\subseteq\supp(\bsell)}|\hat{f}^w_v(\bsk_v)|^2\prod_{j\in v} |\lambda(k_j)|^{-2\alpha} \prod_{j\in \supp(\bsell)\setminus v} |\hat{w}_j(k_j)|^2 |\lambda(k_j)|^{-2\alpha} \\
        = & 2^{|\supp(\bsell)|}\sum_{v\subseteq\supp(\bsell)} \Big(\sum_{\bsk_v\in \mathcal{N}^{-1}_{|v|}(\bsell_v)}|\hat{f}^w_v(\bsk_v)|^2\prod_{j\in v} |\lambda(k_j)|^{-2\alpha} \Big) \prod_{j\in \supp(\bsell)\setminus v} \tilde{\mathcal{S}}(w_j,\ell_j,\alpha),
    \end{align*}
    where
    $$\tilde{\mathcal{S}}(w,\ell,\alpha)=\sum_{k\in \mathcal{N}^{-1}(\ell)} |\hat{w}(k)|^2 |\lambda(k)|^{-2\alpha}.$$
    By Lemma~\ref{lem:fkbound} and Hölder's inequality,
    \begin{align*}
        &\sum_{\bsk_v\in \mathcal{N}^{-1}_{|v|}(\bsell_v)}|\hat{f}^w_v(\bsk_v)|^2\prod_{j\in v} |\lambda(k_j)|^{-2\alpha}\\
        \leq &\Big(\sum_{\bsk_v\in \mathcal{N}^{-1}_{|v|}(\bsell_v)}|\hat{f}^w_v(\bsk_v)|^2\prod_{j\in v} |\lambda(k_j)|^{-2}\Big)^\alpha\Big(\sum_{\bsk_v\in \mathcal{N}^{-1}_{|v|}(\bsell_v)}|\hat{f}^w_v(\bsk_v)|^2\Big)^{1-\alpha}\\
        \leq & \Vert \partial^{v}f^w_v \Vert^{2\alpha}_{L^2(\itr^{|v|})}\Vert f^w_v \Vert^{2-2\alpha}_{L^2(\itr^{|v|})}.
    \end{align*}
    Meanwhile, by Lemma~\ref{lem:wkbound} and Equation~\eqref{eqn:intphik},
    \begin{align*}
       \tilde{\mathcal{S}}(w_\theta,\ell,\alpha)\leq  &16 \sum_{k\in \mathcal{N}^{-1}(\ell)}\min\big(\theta^2\Vert\phi_k\Vert^2_{L^\infty(\itr)}, \Vert \mathcal{I}(\phi_k)\Vert^2_{L^\infty(\itr)}\big)|\lambda(k)|^{-2\alpha} \\
       \leq &16C^2_\phi\sum_{k\in\mathcal{N}^{-1}(\ell) } \min\big(\theta^2 , |\lambda(k)|^{2}\big)|\lambda(k)|^{-2\alpha}. 
    \end{align*}
    The conclusion follows after we plug in the above bounds.
\end{proof}

\section{Norms of $f$ and $f^w$}\label{sec:norm}
In this section, we aim to characterize the norms of $f^w$ under various assumptions on $f$. We first consider the case $ f\in W^{1,q}_{\mix}(\real^s,\varphi)$.

\begin{lemma}\label{lem:dfw}
    For $f\in W^{1,q}_{\mix}(\real^s,\varphi)$ and $f^w(\bsu)=w(\bsu)f\circ T(\bsu)$,
    $$\partial^{1{:}s} f^w (\bsu) =\sum_{v\subseteq 1{:}s}(\partial^{v} f) \circ T(\bsu) \prod_{j\in v} \frac{w^2_j(u_j)}{\varphi\circ T_j(u_j)}\prod_{j\in v^c} w'_j(u_j).$$
\end{lemma}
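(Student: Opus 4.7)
The plan is to prove the formula by applying the ordinary product rule coordinate by coordinate and tracking the two choices at each step. Since $T$ has product structure and $w$ factorizes as $w(\bsu) = \prod_j w_j(u_j)$, each partial derivative $\partial_j$ acts on a single pair $w_j(u_j) \cdot (\text{dependence on } u_j \text{ coming from } f \circ T)$.

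First I would record the identity $T_j'(u) = w_j(u)/\varphi \circ T_j(u)$, which is just a rearrangement of the defining equation $w_j(u) = T_j'(u)\,\varphi \circ T_j(u)$ and is valid because Assumption~\ref{assump:rho} ensures $\varphi > 0$. Next, I would compute the effect of a single partial $\partial_j$ on $f^w$. Using the product rule and then the chain rule (noting that $T$ acts coordinatewise, so $\partial_j (f \circ T)(\bsu) = (\partial_j f) \circ T(\bsu) \cdot T_j'(u_j)$), one obtains
\begin{equation*}
\partial_j f^w(\bsu) \;=\; w_j'(u_j) \prod_{i \neq j} w_i(u_i) \cdot f\circ T(\bsu) \;+\; \frac{w_j^2(u_j)}{\varphi\circ T_j(u_j)} \prod_{i \neq j} w_i(u_i) \cdot (\partial_j f)\circ T(\bsu).
\end{equation*}
Thus at each coordinate we pick up exactly two contributions: a factor $w_j'(u_j)$ (not differentiating the $T$-dependence) or a factor $w_j^2(u_j)/\varphi\circ T_j(u_j)$ together with an extra $\partial_j$ applied to $f$.

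Then I would iterate this operation over all $s$ coordinates. Applying $\partial_1 \partial_2 \cdots \partial_s$ and expanding the resulting binary choice at each step gives a sum over all subsets $v \subseteq 1{:}s$ corresponding to the coordinates at which the ``differentiate $f \circ T$'' branch was chosen. Coordinates in $v^c$ contribute $w_j'(u_j)$ and coordinates in $v$ contribute $w_j^2(u_j)/\varphi \circ T_j(u_j)$; the accumulated derivatives of $f$ combine into $(\partial^v f) \circ T(\bsu)$ since the coordinates of $T$ are independent. This yields precisely the claimed identity. A short induction on $s$ formalizes this expansion if desired.

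The main obstacle is a regularity check rather than algebra: $f$ only has mixed partial derivatives in the weak $L^q(\real^s,\varphi)$ sense, and $w_j$ is merely continuous and piecewise $C^1$ (differentiable on $\itr$ except possibly at finitely many breakpoints of \eqref{eqn:wtheta} where it is in fact $C^1$ by symmetry of $\eta_p$ at $1/2$). I would justify the repeated application of product and chain rules by appealing to the standard weak-derivative product and chain rules, using that $T_j : \itr \to \real$ is a $C^1$ diffeomorphism with $T_j'(u) > 0$ (again via Assumption~\ref{assump:rho}) so the pullback $f \circ T$ preserves mixed weak differentiability, and that $w_j,\,w_j'$ are bounded on $\itr$ (away from the finitely many non-smooth points, which form a null set and can be ignored when working with equivalence classes of $L^q$ functions). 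With these points in place, the coordinatewise product-rule expansion is valid almost everywhere and the identity holds as an equality in $L^1_{\mathrm{loc}}(\itr^s)$.
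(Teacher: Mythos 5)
Your argument is correct and is essentially the same as the paper's, which simply states ``This is a direct application of chain rule for weak derivatives'' without elaboration. You have usefully spelled out the coordinatewise product-rule/chain-rule expansion with the substitution $T_j'(u) = w_j(u)/\varphi\circ T_j(u)$, and you have added the regularity discussion the paper omits (in fact $w_\theta$ is $C^1$ on $\itr$, not merely piecewise so, since $\eta_p'(0^+)=0$; this only strengthens your justification).
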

\begin{proof}
 This is a direct application of chain rule for weak derivatives.
\end{proof}

Recall that $w_j=w_{\theta_j}$ is given by Equation~\eqref{eqn:wtheta} with $\theta_j\in (0,1/2]$ and $\eta=\teta_p$  given by \eqref{eq:tildeetap}, and $T_j$ is given by Equation~\eqref{eqn:Tj}. We summarize useful facts about $\eta_p$ in the following lemma.

\begin{lemma}\label{eqn:etabound}
For $p\geq 1$, $\eta_p(u)$ given by \eqref{eq:tildeetap} is monotonic, differentiable over $u\in [0,1/2]$ and satisfies
$$\eta_p(u)\geq \frac{u^{p+1}}{p} \Tilde{\eta}'_p(u) \quad\text{and} \quad \int_{0}^u\eta_p(t)\rd t\geq \frac{u^{p+1}}{p} \eta_p(u). $$
\end{lemma}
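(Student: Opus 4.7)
The plan is to exploit the identity that $\eta_p$ is, up to a multiplicative constant, the derivative of $h(u):=\exp(2^p-u^{-p})$. Indeed, $h'(u)=pu^{-p-1}h(u)$, and comparing with the definition \eqref{eq:tildeetap} one sees that
\[
\eta_p(u)=\frac{h'(u)}{p\,2^{p+2}}.
\]
This single observation essentially settles everything.

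For the integral inequality, since $\exp(-u^{-p})\to 0$ as $u\to 0^+$, we have $h(0)=0$, so by the fundamental theorem of calculus
\[
\int_0^u \eta_p(t)\rd t \;=\; \frac{h(u)-h(0)}{p\,2^{p+2}}\;=\;\frac{h(u)}{p\,2^{p+2}}\;=\;\frac{u^{p+1}}{p}\,\eta_p(u),
\]
where the last equality just undoes the factor $u^{-p-1}$ inside $\eta_p(u)$. Thus the second inequality in fact holds as equality.

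For monotonicity and the pointwise inequality, I would differentiate $\eta_p$ directly, obtaining
\[
\eta_p'(u)\;=\;2^{-p-2}u^{-p-2}h(u)\bigl[p u^{-p}-(p+1)\bigr].
\]
On $(0,1/2]$ we have $u^{-p}\geq 2^p$, and for $p\geq 1$ the elementary bound $2^p\geq 1+1/p$ gives $pu^{-p}\geq p+1$, so $\eta_p'\geq 0$, giving monotonicity; the smoothness of $\eta_p$ on $[0,1/2]$ follows from the fact that $\exp(-u^{-p})$ (and all its derivatives) vanish as $u\to 0^+$ faster than any polynomial, so $\eta_p$ extends to a $C^\infty$ function at $0$. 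Finally, a direct computation yields
\[
\eta_p(u)-\frac{u^{p+1}}{p}\eta_p'(u)\;=\;2^{-p-2}h(u)\Bigl[u^{-p-1}-\frac{1}{p}\bigl(pu^{-p-1}-(p+1)u^{-1}\bigr)\Bigr]\;=\;\frac{p+1}{p\,2^{p+2}u}\,h(u)\;\geq 0,
\]
which is the first inequality.

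There is no real obstacle here: once one notices the antiderivative structure $\eta_p=\text{const}\cdot h'$, all three claims reduce to one-line manipulations, the only mild care being the verification that $pu^{-p}\geq p+1$ on $(0,1/2]$ for $p\geq 1$, which is what lets monotonicity hold on the full interval and not just near $0$.
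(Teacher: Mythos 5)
Your proof is correct and follows essentially the same route as the paper: both compute $\eta_p'$ explicitly and observe that $\int_0^u\eta_p(t)\rd t$ has a closed form equal to $\frac{u^{p+1}}{p}\eta_p(u)$, so the second inequality is actually an equality. One small point in your favor: you explicitly verify $pu^{-p}\ge p+1$ on $(0,1/2]$ for $p\ge 1$ (via $2^p\ge 1+1/p$), which is the step that actually delivers monotonicity; the paper's appendix asserts "which implies that $\eta_p$ is increasing" without spelling this out.
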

\begin{proof}
    See the appendix.
\end{proof}

\begin{lemma}\label{lem:rhoTbound}
    Suppose $\varphi$ satisfies Assumption~\ref{assump:rho}. For $\varepsilon>0$ and $u\in (0,1/2]$,
    $$\varphi\circ T_j(u)\geq 
       c_{\varepsilon,p} \Big(  \theta_jw_{j}(u)\min\big((2u/\theta_j)^{p+1},1 \big)\Big)^{1+\varepsilon},$$
    where $c_{\varepsilon,p}$ are constants depending on $\varepsilon$ and $p$.
\end{lemma}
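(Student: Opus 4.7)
The plan is to combine the lower-tail bound on $\varphi$ with an explicit estimate of $\Phi \circ T_j(u) = \int_0^u w_j(t)\rd t$. Since $\varphi$ is symmetric, $\Phi(0)=1/2$, and the symmetry of $w_j$ implies $\int_0^u w_j(t)\rd t \leq \int_0^{1/2} w_j(t)\rd t = 1/2$ for $u \in (0,1/2]$. Hence $T_j(u) \leq 0$ on this range, and Assumption~\ref{assump:rho} gives
\[
\varphi\circ T_j(u) \;\geq\; c_\varepsilon\,(\Phi\circ T_j(u))^{1+\varepsilon} \;=\; c_\varepsilon\,\Bigl(\int_0^u w_j(t)\rd t\Bigr)^{1+\varepsilon}.
\]
It therefore suffices to prove a deterministic estimate of the form
\[
\int_0^u w_j(t)\rd t \;\geq\; C_p\,\theta_j\,w_j(u)\,\min\!\bigl((2u/\theta_j)^{p+1},\,1\bigr)
\]
for a constant $C_p>0$ depending only on $p$.

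I verify this inequality by splitting $(0,1/2]$ according to the piecewise definition \eqref{eqn:wtheta} of $w_j$. On $(0,\theta_j/2]$, the change of variables $s=t/\theta_j$ and the second inequality of Lemma~\ref{eqn:etabound} give
\[
\int_0^u w_j(t)\rd t \;=\; (1-\theta_j)^{-1}\theta_j\int_0^{u/\theta_j}\eta_p(s)\rd s \;\geq\; \frac{(u/\theta_j)^{p+1}}{p}\,\theta_j\, w_j(u),
\]
and since $\min((2u/\theta_j)^{p+1},1) = 2^{p+1}(u/\theta_j)^{p+1}$ in this regime this matches the desired bound with constant $1/(p\,2^{p+1})$. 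When $u>\theta_j/2$ the minimum equals $1$, so I only need a uniform lower bound on $\int_0^u w_j$ that dominates $\theta_j w_j(u)$. Bounding below by $\int_0^{\theta_j/2} w_j(t)\rd t$ and evaluating this integral via the substitution $v = s^{-p}-2^p$ (which turns $\eta_p(s)\rd s$ into a scaled $\exp(-v)\rd v$), I obtain
\[
\int_0^{\theta_j/2} w_j(t)\rd t \;=\; \frac{\theta_j}{(1-\theta_j)\,p\,2^{p+2}}.
\]
Combined with $w_j(u)\leq (1-\theta_j)^{-1}$, this gives the required inequality with constant $1/(p\,2^{p+2})$.

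Taking the smaller of the two constants across the three sub-intervals and raising to the power $1+\varepsilon$ yields the claim with $c_{\varepsilon,p} = c_\varepsilon (p\,2^{p+2})^{-(1+\varepsilon)}$. The main engine of the argument is the polynomial-vanishing property $\int_0^s \eta_p \geq s^{p+1}\eta_p(s)/p$ from Lemma~\ref{eqn:etabound}, which is exactly what produces the $(2u/\theta_j)^{p+1}$ damping factor requested in the statement. The only mild obstacle is the middle piece $(\theta_j/2,\theta_j)$ where $w_j$ is not monotone in $u$, but this is bypassed by restricting the lower bound on the integral to $[0,\theta_j/2]$, since the minimum in the target inequality already saturates at $1$ on this range.
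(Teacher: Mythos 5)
Your argument is correct and essentially reproduces the paper's proof: both reduce via Assumption~\ref{assump:rho} and Equation~\eqref{eqn:Tj} to lower-bounding $\mathcal{I}(w_j)(u)$, and both invoke Lemma~\ref{eqn:etabound} on $(0,\theta_j/2]$ to produce the $(u/\theta_j)^{p+1}$ factor. The only difference is cosmetic: you handle all of $(\theta_j/2,1/2]$ at once by bounding $\mathcal{I}(w_j)(u)\geq\mathcal{I}(w_j)(\theta_j/2)$ and $w_j(u)\leq(1-\theta_j)^{-1}$, whereas the paper splits that range into $(\theta_j/2,\theta_j)$ and $[\theta_j,1/2]$ and computes $\mathcal{I}(w_j)$ exactly on the latter.
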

\begin{proof}
    By Assumption~\ref{assump:rho} and Equation~\eqref{eqn:Tj}, for any $\varepsilon>0$,
    \begin{equation}\label{eqn:phiTjbound}
        \varphi\circ T_j(u)\geq c_\varepsilon\Big(\Phi\circ\Phi^{-1}\big(\mathcal{I}(w_{j})(u)\big) \Big)^{1+\epsilon}=c_\varepsilon\Big(\mathcal{I}(w_{j})(u)\Big)^{1+\epsilon}.
    \end{equation}
    When $u\in  (0,\theta_j/2] $, 
    $$\mathcal{I}(w_{j})(u)=(1-\theta_j)^{-1}\int_0^{u}\eta_p(t/\theta_j)\rd t=(1-\theta_j)^{-1}\theta_j \int_0^{u/\theta_j}\eta_p(t)\rd t.$$
     Lemma~\ref{eqn:etabound} implies that if $u\in  (0,\theta_j/2] $,
    $$\mathcal{I}(w_{j})(u)\geq
         \theta^{-p}_j u^{p+1}w_{j}(u)/p.$$
    When $u\in  (\theta_j/2,\theta_j) $, since $w_{j}(\theta_j/2)=w_{j}(\theta_j)/2\geq w_{j}(u)/2$,
    $$\mathcal{I}(w_{j})(u)\geq \mathcal{I}(w_{j})(\theta_j/2)\geq
         (\theta_j/2^{p+2}) w_{j}(u)/p.$$
    Finally when $u\in [\theta_j,1/2]$, $\mathcal{I}(w_{j})(u)=(1-\theta_j)^{-1}(u-\theta_j/2)$ and $w_{j}(u)=(1-\theta_j)^{-1}$, so $\mathcal{I}(w_{j})(u)\geq (\theta_j/2) w_{j}(u) $. Our conclusion follows after putting the above bounds into \eqref{eqn:phiTjbound}.
\end{proof}

\begin{lemma}\label{lem:dwjbound}
    For $u\in (0,1/2]$,
    $$|w'_j(u)|\leq 
      c_p  \theta^{-1}_j w_j(u) \max\big( (2u/\theta_j)^{-p-1},1\big),$$
    where $c_{p}$ are constants depending on $p$.
\end{lemma}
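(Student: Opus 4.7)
The plan is to verify the bound case by case, following the three-piece definition of $w_{\theta_j}$ on $(0,1/2]$ from Equation~\eqref{eqn:wtheta}. The bound is trivial on $[\theta_j,1/2]$ where $w_j$ is constant and $w_j'\equiv 0$, so all the work lies in the two remaining sub-intervals.

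For $u\in(0,\theta_j/2]$, I would substitute $w_j(u)=(1-\theta_j)^{-1}\eta_p(u/\theta_j)$, differentiate via the chain rule to obtain $w_j'(u)=(1-\theta_j)^{-1}\theta_j^{-1}\eta_p'(u/\theta_j)$, and apply \cref{eqn:etabound} with argument $u/\theta_j$ to dominate $\eta_p'(u/\theta_j)$ by $p(u/\theta_j)^{-p-1}\eta_p(u/\theta_j)$. This produces the factor $(u/\theta_j)^{-p-1}w_j(u)$, which rewrites as $2^{p+1}(2u/\theta_j)^{-p-1}w_j(u)$; since $2u/\theta_j\le 1$ here, we have $(2u/\theta_j)^{-p-1}=\max\bigl((2u/\theta_j)^{-p-1},1\bigr)$, and we get the claim with constant $p\cdot 2^{p+1}$.

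For $u\in(\theta_j/2,\theta_j)$, the substitution $t=1-u/\theta_j\in(0,1/2)$ gives $|w_j'(u)|=(1-\theta_j)^{-1}\theta_j^{-1}\eta_p'(t)$, while on this sub-interval $\max\bigl((2u/\theta_j)^{-p-1},1\bigr)=1$. So I need two ingredients: a uniform bound $\eta_p'(t)\le M_p$ on $[0,1/2]$ (with $M_p$ depending only on $p$), and a lower bound on $w_j(u)$ to absorb the prefactor $(1-\theta_j)^{-1}$. The latter is immediate from monotonicity and $\eta_p(1/2)=1/2$, giving $1-\eta_p(t)\ge 1/2$ and hence $w_j(u)\ge (1-\theta_j)^{-1}/2$, so $(1-\theta_j)^{-1}\le 2w_j(u)$. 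Combining these yields $|w_j'(u)|\le 2M_p\theta_j^{-1}w_j(u)$, as required.

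The only non-routine step is establishing $M_p:=\sup_{t\in(0,1/2]}\eta_p'(t)<\infty$. The inequality from \cref{eqn:etabound}, $\eta_p'(t)\le pt^{-p-1}\eta_p(t)$, blows up as $t\to 0^+$ and is therefore too weak on its own, even though $\eta_p'(t)$ itself decays at $0$. Instead, I would differentiate the explicit formula \eqref{eq:tildeetap} to write $\eta_p'(t)=\eta_p(t)\bigl(pt^{-p}-(p+1)\bigr)/t$, which, after substituting back the formula for $\eta_p$, carries a factor $\exp(-t^{-p})$ that dominates every negative power of $t$ as $t\to 0^+$. This shows $\eta_p'$ extends continuously by $0$ at $t=0$ and is therefore bounded on the compact interval $[0,1/2]$. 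Once $M_p$ is in hand, setting $c_p=\max(p\cdot 2^{p+1},\,2M_p)$ combines all three cases into the stated bound. This super-exponential-decay check is the main (and only) obstacle; the rest is bookkeeping.
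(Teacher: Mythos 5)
Your proof is correct and follows the paper's argument essentially line by line: the same three-piece case split, the same use of \cref{eqn:etabound} on $(0,\theta_j/2]$, the same lower bound $w_j(u)\ge(1-\theta_j)^{-1}/2$ on $(\theta_j/2,\theta_j)$, and the same appeal to the boundedness of $\eta_p'$ on $[0,1/2]$. The only addition is that you spell out why $\sup_{t\in[0,1/2]}\eta_p'(t)$ is finite (the factor $\exp(-t^{-p})$ absorbs the negative powers of $t$ as $t\to 0^+$), a detail the paper asserts without proof.
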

\begin{proof}
    Differentiating Equation~\eqref{eqn:wtheta} gives
    $$w'_{j}(u)=\begin{cases}
    \theta^{-1}_j(1-\theta_j)^{-1}\eta'_p(u/\theta_j), &\text{ if } u\in (0,\theta_j/2] \\
    \theta^{-1}_j(1-\theta_j)^{-1}\eta'_p(1-u/\theta_j) , &\text{ if } u\in (\theta_j/2,\theta_j) \\
    0, &\text{ if } u\in [\theta_j,1/2] 
\end{cases}.  $$
When $u\in (0,\theta_j/2]$, Lemma~\ref{eqn:etabound} implies
$$|w'_j(u)|\leq 
     p  \theta^{p}_j u^{-p-1} w_j(u).$$
When $u\in (\theta_j/2,\theta_j)$, 
$$|w'_j(u)|\leq \theta^{-1}_j(1-\theta_j)^{-1}\sup_{t\in [0,1/2]}\eta_p'(t)\leq 2\theta^{-1}_j w_j(u)\sup_{t\in [0,1/2]}\eta_p'(t).$$
The conclusion then follows since $\sup_{t\in [0,1/2]}\eta'_p(t)$ is finite and only depends on $p$. 
\end{proof}

\begin{theorem}\label{thm:Wqcase}
    For $f\in W^{1,q}_{\mix}(\real^s,\varphi)$ with $q>1$, $\varphi$ satisfying Assumption~\ref{assump:rho} and $\varepsilon\in (0,1-1/q)$, we have
    $$\Vert\partial^{1{:}s} f^w \Vert_{L^q(\itr^s)}\leq  \Big(\prod_{j=1}^sC_{\varepsilon,p,q}\theta^{-1-\varepsilon}_j\Big)\Vert f \Vert_{W^{1,q}_{\mix}(\real^s,\varphi)},$$
    where $C_{\varepsilon,p,q}$ is a constant depending on $\varepsilon,p$ and $q$.
\end{theorem}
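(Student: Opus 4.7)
The plan is to invoke Lemma~\ref{lem:dfw} to expand $\partial^{1{:}s}f^w$ as a sum over $v\subseteq 1{:}s$ of $2^s$ terms, apply Minkowski's inequality in $L^q(\itr^s)$, and bound each term by changing variables from $\bsu$ back to $\bsx=T(\bsu)$. Concretely, for each $v$, the $q$-th power of the $L^q$ norm of the $v$-term equals
\begin{align*}
I_v&=\int_{\itr^s}|(\partial^v f)\circ T(\bsu)|^q\prod_{j\in v}\frac{w_j^{2q}(u_j)}{(\varphi\circ T_j(u_j))^q}\prod_{j\in v^c}|w'_j(u_j)|^q\rd\bsu\\
&=\int_{\real^s}|\partial^v f(\bsx)|^q\prod_{j\in v}R_j(x_j)\prod_{j\in v^c}S_j(x_j)\prod_{j=1}^s\varphi(x_j)\rd\bsx,
\end{align*}
after using the Jacobian $\rd u_j=\varphi(x_j)/w_j(u_j)\rd x_j$, where $R_j(x_j):=w_j(u_j)^{2q-1}/\varphi(x_j)^q$ and $S_j(x_j):=|w'_j(u_j)|^q/w_j(u_j)$ with $u_j=T_j^{-1}(x_j)$.

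The heart of the argument is to establish two uniform bounds: $\sup_{x}R_j(x)\leq C_{\varepsilon,p,q}\theta_j^{-q(1+\varepsilon)}$ and $\sup_{x}S_j(x)\leq C_{p,q}\theta_j^{-q}$. For $R_j$, I would apply Lemma~\ref{lem:rhoTbound} to get
$$R_j\leq c_{\varepsilon,p}^{-q}\,\theta_j^{-q(1+\varepsilon)}\,w_j(u)^{q(1-\varepsilon)-1}\,\max\bigl((2u/\theta_j)^{-(p+1)},1\bigr)^{q(1+\varepsilon)},$$
with the exponent $q(1-\varepsilon)-1>0$ because $\varepsilon<1-1/q$. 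On $u\in[\theta_j,1/2]$ (and its reflection) both $w_j$ and the $\max$ factor are bounded by constants. On $u\in(0,\theta_j/2]$, substituting the explicit form \eqref{eq:tildeetap} of $\eta_p$ reduces the bound to $t^{-(p+1)(2q-1)}\exp(-(q(1-\varepsilon)-1)t^{-p}+\mathrm{const})$ with $t=u/\theta_j$, and the exponential decay kills the polynomial blow-up as $t\downarrow 0$. Symmetry handles $(1/2,1)$. For $S_j$, Lemma~\ref{lem:dwjbound} gives $S_j\leq c_p^q\theta_j^{-q}w_j^{q-1}\max(\cdot)^q$, and the same case analysis (using $q-1>0$) shows the non-$\theta_j$ factor is uniformly bounded.

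Combining the two uniform bounds,
$$I_v\le C^s_{\varepsilon,p,q}\,\prod_{j\in v}\theta_j^{-q(1+\varepsilon)}\prod_{j\in v^c}\theta_j^{-q}\,\|\partial^v f\|_{L^q(\real^s,\varphi)}^q\leq C^s_{\varepsilon,p,q}\,\prod_{j=1}^{s}\theta_j^{-q(1+\varepsilon)}\,\|\partial^v f\|_{L^q(\real^s,\varphi)}^q,$$
since $\theta_j\leq 1/2$. Taking $q$-th roots, summing over $v\subseteq 1{:}s$, and applying Hölder's inequality $\sum_v\|\partial^v f\|_{L^q}\leq 2^{s(q-1)/q}\|f\|_{W^{1,q}_{\mix}(\real^s,\varphi)}$ absorbs the combinatorial $2^s$ factor into $C^s_{\varepsilon,p,q}$ and yields the claimed estimate.

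The main obstacle is the uniform bound on $R_j$ near $u=0$: here $w_j(u)$ is very small so the factor $w_j^{q(1-\varepsilon)-1}$ helps, but simultaneously the $\max((2u/\theta_j)^{-(p+1)},1)^{q(1+\varepsilon)}$ factor explodes, and one must open up the explicit super-exponential structure of $\eta_p$ to see that the $\exp(-t^{-p})$ term in $w_j$ dominates every polynomial in $1/t$. Once this single pointwise estimate is in hand, the rest is bookkeeping.
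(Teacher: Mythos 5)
Your proposal is correct and follows essentially the same approach as the paper's proof: both start from Lemma~\ref{lem:dfw}, reduce the problem to the two uniform bounds $\sup_u w_j^{2-1/q}/(\varphi\circ T_j)\lesssim\theta_j^{-1-\varepsilon}$ and $\sup_u |w'_j|/w_j^{1/q}\lesssim\theta_j^{-1}$ via Lemmas~\ref{lem:rhoTbound} and~\ref{lem:dwjbound}, exploit $q(1-\varepsilon)-1>0$ (resp.\ $q-1>0$) together with the super-exponential decay of $\eta_p$ to control the $u\downarrow 0$ blow-up, change variables back to $\real^s$, and absorb the $2^s$ combinatorial factor via Hölder. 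The only difference is bookkeeping order (you apply Minkowski term-by-term and then Hölder on the norm sum, the paper bounds pointwise and applies Hölder once before integrating), which is immaterial.
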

\begin{proof}
    First we use Lemma~\ref{lem:dfw} to bound
    \begin{align}\label{eqn:partialfbound}
        |\partial^{1{:}s} f^w (\bsu)|\leq \sum_{v\subseteq 1{:}s}|(\partial^{v} f) \circ T(\bsu)| \prod_{j=1}^s w_j(u_j)^{1/q}\prod_{j\in v} \frac{w_j(u_j)^{2-1/q}}{\varphi\circ T_j(u_j)}\prod_{j\in v^c} \frac{|w'_j(u_j)|}{w_j(u_j)^{1/q}}.
    \end{align}
    By Lemma~\ref{lem:rhoTbound} and symmetry,
    \begin{align*}
        \sup_{u\in \itr }\frac{w_j(u)^{2-1/q}}{\varphi\circ T_j(u)}=&\sup_{u\in (0,1/2] }\frac{w_j(u)^{2-1/q}}{\varphi\circ T_j(u)}\\
        \leq &
       \tc^{-1}_{\varepsilon,p}  \theta^{-1-\varepsilon}_j\sup_{u\in (0,1/2] }w_{j}(u)^{1-1/q-\varepsilon}\max\big((2u/\theta_j)^{-(1+\varepsilon)(1+p)},1\big).
    \end{align*}
   Note that
   \begin{align*}
       &\sup_{u\in (0,1/2] }w_{j}(u)^{1-1/q-\varepsilon}\max\big((2u/\theta_j)^{-(1+\varepsilon)(1+p)},1\big)\\
       \leq & \max\Big(\sup_{u\in (0,\theta_j/2)}(2u/\theta_j)^{-(2-1/q)(1+p)}\exp\Big((1-1/q-\varepsilon)(2^p-(u/\theta_j)^{-p})\Big), 2\Big),
   \end{align*}
   which can be further bounded in terms of $\varepsilon,p$ and $q$. Hence
   \begin{equation}\label{eqn:woverphi}
      \sup_{u\in \itr }\frac{w_j(u)^{2-1/q}}{\varphi\circ T_j(u)}\leq C_1\theta^{-1-\varepsilon}_j 
   \end{equation}
   for $C_1$ depending on $\varepsilon,p$ and $q$. A similar calculation using Lemma~\ref{lem:dwjbound} shows
   \begin{equation}\label{eqn:dw}
       \sup_{u\in \itr } \frac{|w'_j(u_j)|}{w_j(u_j)^{1/q}} \leq C_2 \theta^{-1}_j
   \end{equation}
   for $C_2$ depending on $p$ and $q$. Using the above bounds, Equation~\eqref{eqn:partialfbound} becomes
   \begin{align*}
        |\partial^{1{:}s} f^w (\bsu)|\leq &\sum_{v\subseteq 1{:}s}|(\partial^{v} f) \circ T(\bsu)| \prod_{j=1}^s w_j(u_j)^{1/q} \prod_{j\in v} C_1\theta^{-1-\varepsilon}_j\prod_{j\in v^c} C_2\theta^{-1}_j\\
        \leq &  \Big(\prod_{j=1}^sC_{\varepsilon,p,q}\theta^{-1-\varepsilon}_j\Big) \Big(\sum_{v\subseteq 1{:}s}|(\partial^{v} f) \circ T(\bsu)|^q \prod_{j=1}^s w_j(u_j)\Big)^{1/q},
    \end{align*}
    where
    $$C_{\varepsilon,p,q} =\Big(\sum_{v\subseteq 1{:}s} C^{\frac{q}{q-1}|v|}_1C^{\frac{q}{q-1}(s-|v|)}_2\Big)^{\frac{1}{s}(1-\frac{1}{q})}=\Big(C^{\frac{q}{q-1}}_1+C^{\frac{q}{q-1}}_2\Big)^{1-\frac{1}{q}}.$$
    The conclusion then follows because
    \begin{align*}
     &\sum_{v\subseteq 1{:}s}\int_{\itr^s}|(\partial^{v} f) \circ T(\bsu)|^q \prod_{j=1}^s w_j(u_j)\rd \bsu\\
     =&\sum_{v\subseteq 1{:}s}\int_{\itr^s}|(\partial^{v} f) \circ T(\bsu)|^q \prod_{j=1}^s \varphi\circ T_j(u_j)T'_j(u_j)\rd \bsu\\
     = &\sum_{v\subseteq 1{:}s}\int_{\real^s}|\partial^{v} f(\bsx)|^q \prod_{j=1}^s \varphi(x_j)\rd \bsx=\Vert f \Vert^q_{W^{1,q}_{\mix}(\real^s,\varphi)}.
    \end{align*}
\end{proof}

Theorem~\ref{thm:Wqcase} shows $\partial^{1{:}s} f^w \in L^2(\itr^s)$ when $f\in W^{1,2}_{\mix}(\real^s,\varphi)$. However, in some applications the boundary growth of $f$ is too rapid for even $f\in L^2(\real^s,\varphi)$ to hold. Characterizing $f$ by its $W^{1,q,\infty}_{\mix}(\real^s,\varphi)$-norm is a better choice for such cases. The next theorem shows $f^w \in W^{1,2}_{\mix}(\itr^s)$ even for $f\in W^{1,q,\infty}_{\mix}(\real^s,\varphi)$ with $1<q\leq 2$.

\begin{theorem}\label{thm:Wqinfcase}
     For $f\in W^{1,q,\infty}_{\mix}(\real^s,\varphi)$ with $1<q\le 2$, $\varphi$ satisfying Assumption~\ref{assump:rho}, $\varepsilon\in (0,(q-1)/(q+1))$ and $q'\geq q$, then
     \begin{equation}\label{eqn:fLq'bound}
       \Vert f^w \Vert_{L^{q'}(\itr^s)} \leq  \Big(\prod_{j=1}^sC_{\varepsilon,p,q,q'}\theta^{-(1+\varepsilon)(1/q-1/q')}_j\Big)\Vert f \Vert_{L^{q,\infty}(\real^s,\varphi)}   
     \end{equation}
     and
     \begin{equation}\label{eqn:DfLq'bound}
         \Vert\partial^{1{:}s} f^w \Vert_{L^{q'}(\itr^s)}\leq  \Big(\prod_{j=1}^sC'_{\varepsilon,p,q,q'}\theta^{-(1+\varepsilon)(1+1/q-1/q')}_j\Big)\Vert f \Vert_{W^{1,q,\infty}_{\mix}(\real^s,\varphi)},
     \end{equation}
    where $C_{\varepsilon,p,q,q'},C'_{\varepsilon,p,q,q'}$ are constants depending on $\varepsilon,p,q$ and $q'$.
\end{theorem}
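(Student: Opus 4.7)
The plan is to establish both bounds by pointwise estimates in $\bsu$ followed by integration, paralleling the proof of Theorem~\ref{thm:Wqcase} but using the pointwise control that the $L^{q,\infty}$ and $W^{1,q,\infty}_{\mix}$ norms provide in place of $L^q$-integration of the integrand and its derivatives. Since $f\in L^{q,\infty}(\real^s,\varphi)$ does not in general lie in $L^q(\real^s,\varphi)$, the $L^{q'}$ estimate cannot be obtained by a straightforward change of variables as in Theorem~\ref{thm:Wqcase}; instead, I would combine a direct $L^\infty$-type estimate with an $L^{q_2}$ estimate for some $q_2\in[1,q)$ and interpolate via H\"older's inequality to hit the target exponent on $\theta_j$.

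For the bound \eqref{eqn:fLq'bound}, I would start from $|f^w(\bsu)|=w(\bsu)\,|f\circ T(\bsu)|$ and insert the pointwise estimate $|f(\bsx)|\le \Vert f\Vert_{L^{q,\infty}(\real^s,\varphi)}\,\varphi_\infty^{s/q}\prod_{j=1}^{s}\varphi(x_j)^{-1/q}$ that is built into the definition of $L^{q,\infty}$. Taking powers makes the upper bound separable across coordinates and reduces the computation to one-dimensional integrals of the form $\int_\itr w_j(u)^{\alpha}\varphi(T_j(u))^{-\alpha/q}\rd u$. For such an integral, Lemma~\ref{lem:rhoTbound} bounds the integrand by $C\,\theta_j^{-(1+\varepsilon)\alpha/q}\,w_j(u)^{\alpha(1-(1+\varepsilon)/q)}\,\max((2u/\theta_j)^{-(1+\varepsilon)(p+1)\alpha/q},1)$; the condition $(1+\varepsilon)<q$, which follows from $\varepsilon<(q-1)/(q+1)<q-1$, keeps the $w_j$-exponent strictly positive, so the super-exponential decay of $w_j(u)$ as $u\to 0$ absorbs the polynomial blow-up. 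Splitting $\itr$ into $(0,\theta_j/2]$, $(\theta_j/2,\theta_j)$, $[\theta_j,1/2]$ and their mirror images then gives, after elementary computations, $\int_\itr w_j^{\alpha}\varphi(T_j)^{-\alpha/q}\rd u\le C\,\theta_j^{-(1+\varepsilon)\alpha/q}$. Applying this with $\alpha=q'$ yields the sup estimate $M:=\Vert f^w\Vert_{L^\infty(\itr^s)}\le C^s\prod_j\theta_j^{-(1+\varepsilon_1)/q}\Vert f\Vert_{L^{q,\infty}}$ for any $\varepsilon_1<\varepsilon$, while combining Lemma~\ref{lem:fwk}'s estimate \eqref{eqn:fwLqbound} at exponent $q_2<q$ with Lemma~\ref{lem:embedding} gives the endpoint estimate $\Vert f^w\Vert_{L^{q_2}(\itr^s)}\le C^s\Vert f\Vert_{L^{q,\infty}}$. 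H\"older's inequality $\Vert f^w\Vert_{L^{q'}}\le M^{1-q_2/q'}\Vert f^w\Vert_{L^{q_2}}^{q_2/q'}$ with $q_2$ and $\varepsilon_1$ chosen so that $(1+\varepsilon_1)(q'-q_2)=(1+\varepsilon)(q'-q)$ then delivers the claimed exponent $(1+\varepsilon)(1/q-1/q')$.

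For the bound \eqref{eqn:DfLq'bound}, I would start from Lemma~\ref{lem:dfw}, which writes $\partial^{1{:}s}f^w(\bsu)$ as a sum over $v\subseteq 1{:}s$ of products of $(\partial^{v}f)\circ T(\bsu)$ with factors $w_j^2/\varphi\circ T_j$ for $j\in v$ and $w_j'$ for $j\notin v$. Applying the pointwise bound $|\partial^{v}f(\bsx)|\le \Vert f\Vert_{W^{1,q,\infty}_{\mix}(\real^s,\varphi)}\,\varphi_\infty^{s/q}\prod_j\varphi(x_j)^{-1/q}$ together with Lemmas~\ref{lem:rhoTbound} and~\ref{lem:dwjbound} coordinate-by-coordinate produces two families of one-dimensional integrals, $\int[w_j^2/\varphi(T_j)^{1+1/q}]^{q'}\rd u$ for $j\in v$ and $\int[|w_j'|/\varphi(T_j)^{1/q}]^{q'}\rd u$ for $j\notin v$. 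The critical residual $w_j$-exponent in the first family is $q'[2-(1+\varepsilon)(1+1/q)]$, which is positive precisely when $\varepsilon<(q-1)/(q+1)$, explaining that part of the theorem's hypothesis. The same boundary splitting as in the previous paragraph bounds each integral by $C\,\theta_j^{-(1+\varepsilon)(1+1/q)q'}$, giving the sup estimate $\Vert\partial^{1{:}s}f^w\Vert_{L^\infty(\itr^s)}\le C^s\prod_j\theta_j^{-(1+\varepsilon_1)(1+1/q)}\Vert f\Vert_{W^{1,q,\infty}_{\mix}}$. The interpolation step is structurally identical to that of \eqref{eqn:fLq'bound}, but the $L^{q_2}$ endpoint is now furnished by Theorem~\ref{thm:Wqcase} applied at exponent $q_2<q$, combined with the second half of Lemma~\ref{lem:embedding} to bound $\Vert f\Vert_{W^{1,q_2}_{\mix}(\real^s,\varphi)}$ by $\Vert f\Vert_{W^{1,q,\infty}_{\mix}(\real^s,\varphi)}$.

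The most delicate part of the argument is the bookkeeping in the H\"older interpolation: the target exponents on $\theta_j$ vary continuously with $q'\in[q,\infty)$, while the two endpoints $L^\infty$ and $L^{q_2}$ supply only the extreme values, so one must verify that a compatible pair $(q_2,\varepsilon_1)\in[1,q)\times(0,\varepsilon)$ exists uniformly in $q'\ge q$. The remaining work is essentially elementary: after the substitution $v=u/\theta_j$, all one-dimensional integrals over $(0,\theta_j/2]$ reduce to moments of $v^{-a}\exp(-cv^{-p})$, which are finite and contribute only $\varepsilon,p,q,q'$-dependent constants that can be absorbed into $C_{\varepsilon,p,q,q'}$ and $C'_{\varepsilon,p,q,q'}$.
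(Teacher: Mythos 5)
Your proposal follows essentially the same route as the paper: bound $\Vert f^w\Vert_{L^\infty}$ (resp.\ $\Vert\partial^{1:s}f^w\Vert_{L^\infty}$) pointwise via the $L^{q,\infty}$ definition and Lemma~\ref{lem:rhoTbound}, bound the low-exponent norm via Lemma~\ref{lem:embedding} together with \eqref{eqn:fwLqbound} (resp.\ Theorem~\ref{thm:Wqcase}), and interpolate; the paper also uses exactly this $L^{q-\varepsilon'}$--$L^\infty$ interpolation, and your identification of the critical threshold $\varepsilon<(q-1)/(q+1)$ from the sign of the residual $w_j$-exponent matches the paper's reasoning (which the paper encodes via its choice of $q^*$ in \eqref{eqn:woverphi}).

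One small slip: you write ``Applying this with $\alpha=q'$ yields the sup estimate $M:=\Vert f^w\Vert_{L^\infty(\itr^s)}\le\cdots$''; with $\alpha=q'$ your one-dimensional calculation produces an $L^{q'}$ bound, not a sup bound. The $L^\infty$ endpoint you need (and that the paper computes in the display following ``By Equation~\eqref{eqn:woverphi} with $q^*=1/(2-q)$'') comes from taking the supremum of $w_j(u)/(\varphi\circ T_j(u))^{1/q}$ rather than integrating it; the surrounding logic makes clear that this is what you intend, so this does not break the argument. You are also right to flag the bookkeeping for $(q_2,\varepsilon_1)$: a compatible pair exists for each fixed $q'$ (which is all that is required since the constants may depend on $q'$), though for the derivative bound the two endpoints both carry $\theta_j$-dependence, so the tuning relation differs slightly from $(1+\varepsilon_1)(q'-q_2)=(1+\varepsilon)(q'-q)$; the paper sidesteps this by fixing $q_2=q-\varepsilon'$ and sending $\varepsilon'\to 0$, which yields the target exponent as a limit rather than an exact match.
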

\begin{proof}
We first prove the bound on $ \Vert f^w \Vert_{L^{q'}(\itr^s)}  $. Let $\varepsilon'\in (0,q-1)$. By Lemma~\ref{lem:embedding},  $\Vert f \Vert_{L^{q-\varepsilon'}(\itr^s)}\leq C^s_3\Vert f \Vert_{L^{q,\infty}(\itr^s)}$ for $C_3$ depending on $q$ and $\varepsilon'$. By Equation~\eqref{eqn:fwLqbound}, 
    $$\Vert f^w \Vert_{L^{q-\varepsilon'}(\itr^s)} \leq 2^{\frac{q-\varepsilon'-1}{q-\varepsilon'}s}\Vert f \Vert_{L^{q-\varepsilon'}(\real^s,\varphi)}\leq \Big(2^{\frac{q-\varepsilon'-1}{q-\varepsilon'}}C_3\Big)^s\Vert f \Vert_{L^{q,\infty}(\real^s,\varphi)}.$$
    Next, we use the definition of $L^{q,\infty}(\real^s,\varphi)$-norm to bound
    $$|f^w (\bsu)| = |f\circ T(\bsu)|\prod_{j=1}^s w_j(u_j) \leq \varphi^{s/q}_\infty\Vert f \Vert_{L^{q,\infty}(\real^s,\varphi)}\prod_{j=1}^s \frac{w_j(u_j)}{\varphi\circ T_j(u_j)^{1/q}}.  $$
    By Equation~\eqref{eqn:woverphi} with $q^*=1/(2-q)$,
    $$\sup_{u\in \itr}\frac{w_j(u)}{\varphi\circ T_j(u)^{1/q}}=\Big(\sup_{u\in \itr}\frac{w_j(u)^{2-1/q^*}}{\varphi\circ T_j(u)}\Big)^{1/q} \leq C_4 \theta_j^{-(1+\varepsilon/2)/q}$$
for $C_4$ depending on $\varepsilon,p$ and $q$. Hence,
$$\Vert f^w \Vert_{L^{\infty}(\itr^s)} \leq \varphi^{s/q}_\infty\Vert f \Vert_{L^{q,\infty}(\real^s,\varphi)} \prod_{j=1}^s C_4\theta_j^{-(1+\varepsilon/2)/q}. $$
Then by the interpolation inequality \cite[Theorem 2.11]{adams2003sobolev},
$$\Vert f^w \Vert_{L^{q'}(\itr^s)} \leq \Vert f^w \Vert_{L^{q-\varepsilon'}(\itr^s)}^{\frac{q-\varepsilon'}{q'}} \Vert f^w \Vert^{1-\frac{q-\varepsilon'}{q'}}_{L^{\infty}(\itr^s)}\leq \Vert f \Vert_{L^{q,\infty}(\real^s,\varphi)}\prod_{j=1}^s C_5 \theta_j^{-A}$$
for $C_5$ depending on $\varepsilon,\varepsilon',p,q$ and $q'$ (note that $\varphi_\infty$ is treated as a constant), and
$$A=\Big(1-\frac{q-\varepsilon'}{q'}\Big)\frac{1+\varepsilon/2}{q}=(1+\varepsilon)\Big(\frac{1}{q}-\frac{1}{q'}\Big)-\frac{\varepsilon}{2}\Big(\frac{1}{q}-\frac{1}{q'}-\frac{2\varepsilon'}{\varepsilon qq'}-\frac{\varepsilon'}{ qq'}\Big).$$
Equation~\eqref{eqn:fLq'bound} follows after we choose a sufficiently small $\varepsilon'$.

The bound on $\Vert\partial^{1{:}s} f^w \Vert_{L^{q'}(\itr^s)}$ can be proven similarly. By Lemma~\ref{lem:embedding} and Theorem~\ref{thm:Wqcase},
 $$\Vert\partial^{1{:}s} f^w \Vert_{L^{q-\varepsilon'}(\itr^s)} \leq  \Big(\prod_{j=1}^sC_6\theta^{-1-\varepsilon}_j\Big)\Vert f \Vert_{W^{1,q,\infty}_{\mix}(\real^s,\varphi)}$$
    for $C_6$ depending on $\varepsilon,\varepsilon',p$ and $q$.
    Next, we use Lemma~\ref{lem:dfw} to bound
    \begin{align*}
        |\partial^{1{:}s} f^w (\bsu)|\leq & \sum_{v\subseteq 1{:}s}\Vert \partial^v f \Vert_{L^{q,\infty}(\real^s,\varphi)}\Big(\prod_{j=1}^s\frac{\varphi \circ T_j(\bsu)}{\varphi_\infty}\Big)^{-1/q} \prod_{j\in v} \frac{w_j(u_j)^2}{\varphi\circ T_j(u_j)}\prod_{j\in v^c} |w'_j(u_j)|\\
        =& \varphi^{s/q}_\infty\sum_{v\subseteq 1{:}s}\Vert \partial^v f \Vert_{L^{q,\infty}(\real^s,\varphi)} \prod_{j\in v} \frac{w_j(u_j)^2}{(\varphi\circ T_j(u_j))^{1+1/q}}\prod_{j\in v^c} \frac{|w'_j(u_j)|}{(\varphi\circ T_j(u_j))^{1/q}}.
    \end{align*}
    By Equation~\eqref{eqn:woverphi} with $q^*$ satisfying $(2-1/q^*)(1+1/q)=2$ and $\varepsilon\in (0,1-1/q^*)$,
    $$\sup_{u\in \itr}\frac{w_j(u)^2}{(\varphi\circ T_j(u))^{1+1/q}}=\Big(\sup_{u\in \itr} \frac{w_j(u)^{2-1/q^*}}{\varphi\circ T_j(u)}\Big)^{1+1/q}\leq C_7 \theta_j^{-(1+\varepsilon/2)(1+1/q)}$$
    for $C_7$ depending on $\varepsilon,p$ and $q$, where $p$ is required to satisfy $p\geq (1-1/q^*-\varepsilon)^{-1}(1+\varepsilon)$ if $\eta=\eta_p$.
     Similarly by Equation~\eqref{eqn:dw}, 
     $$\sup_{u\in \itr}\frac{|w'_j(u_j)|}{(\varphi\circ T_j(u_j))^{1/q}}\leq \sup_{u\in \itr}\frac{|w'_j(u_j)|}{w_j(u_j)^{2/(q+1)}} \Big(\sup_{u\in \itr} \frac{w_j(u)^{2-1/q^*}}{\varphi\circ T_j(u)}\Big)^{1/q}\leq C_8 \theta_j^{-1-(1+\varepsilon/2)/q}$$
     for $C_8$ depending on $\varepsilon,p$ and $q$. Using the above bounds,
     \begin{align*}
       & \Vert\partial^{1{:}s} f^w\Vert_{L^{\infty}(\itr)}\\
       \leq & \varphi^{s/q}_\infty\sum_{v\subseteq 1{:}s}\Vert \partial^v f \Vert_{L^{q,\infty}(\real^s,\varphi)}  \prod_{j\in v}C_7\theta_j^{-(1+\varepsilon/2)(1+1/q)} \prod_{j\in v^c}C_8 \theta_j^{-1-(1+\varepsilon/2)/q}\\
        \leq & \varphi^{s/q}_\infty\Vert f \Vert_{W^{1,q,\infty}_{\mix}(\real^s,\varphi)}\Big(\sum_{v\subseteq 1{:}s} C^{\frac{q}{q-1}|v|}_7C^{\frac{q}{q-1}(s-|v|)}_8\Big)^{1-\frac{1}{q}} \prod_{j=1}^s \theta_j^{-(1+\varepsilon/2)(1+1/q)}\\
        = & \varphi^{s/q}_\infty\Vert f \Vert_{W^{1,q,\infty}_{\mix}(\real^s,\varphi)} \prod_{j=1}^s \Big(C^{\frac{q}{q-1}}_7+C^{\frac{q}{q-1}}_8\Big)^{1-\frac{1}{q}} \theta_j^{-(1+\varepsilon/2)(1+1/q)}.
    \end{align*}
    Finally, we use the interpolation inequality to get 
    \begin{align*}
        \Vert\partial^{1{:}s} f^w \Vert_{L^{q'}(\itr^s)}\leq &\Vert\partial^{1{:}s} f^w \Vert_{L^{q-\varepsilon'}(\itr^s)}^{\frac{q-\varepsilon'}{q'}} \Vert\partial^{1{:}s} f^w \Vert^{1-\frac{q-\varepsilon'}{q'}}_{L^{\infty}(\itr^s)}\\
        \leq &\Vert f \Vert_{W^{1,q,\infty}_{\mix}(\real^s,\varphi)} \prod_{j=1}^s C_9 \theta_j^{-B}
    \end{align*}
    for $C_9$ depending on $\varepsilon,\varepsilon',p,q$ and $q'$, and
    \begin{align*}
        B=&\frac{q-\varepsilon'}{q'}\Big(1+\varepsilon\Big)+\Big(1-\frac{q-\varepsilon'}{q'}\Big)\Big(1+\frac{1}{q}\Big)\Big(1+\frac{\varepsilon}{2}\Big)\\
        =&\Big(1+\frac{1}{q}-\frac{1}{q'}\Big)\Big(1+\varepsilon\Big)-\frac{\varepsilon}{2qq'}\Big((q'-q)(1+q)-\frac{\varepsilon'}{\varepsilon}(2+\varepsilon-\varepsilon q)\Big).
    \end{align*}
    Equation~\eqref{eqn:DfLq'bound} follows after we choose a sufficiently small $\varepsilon'$.

\end{proof}




\section{Application to digital nets}\label{sec:main}

In this section, we apply our theory and derive conditions under which $f^w$ can be efficiently integrated by digital nets. In Example~\ref{example}, we have shown that $\phi_k(u)={}_b\walk (u)$ satisfies Equation~\eqref{eqn:intphik} for $\mathcal{N}(k)=\kappa_r b^{r-1}$ and $\lambda(k)=\Vert\mathcal{I}(\phi_{\mathcal{N}(k)})\Vert_{L^\infty(\itr)}\leq b^{-r+1}$, where $r$ and $\kappa_r$ are determined by $\kappa_r b^{r-1}\leq k <(\kappa_r+1) b^r$. Combining Theorem~\ref{thm:fkbound} and Theorem~\ref{thm:Wqcase}, we arrive at the following theorem.
\begin{theorem}\label{thm:Lqfwkbound}
For $\phi_k(u)={}_b\walk (u)$, $\alpha\in (0,1/2)$, $\bsell\in \natu_0^{s}\setminus\{\bszero\}$, $\bskappa\in \{1,\dots,b-1\}^{s}$, $f\in W^{1,2}_{\mix}(\real^s,\varphi)$, $w_j=w_{\theta_j}$  for $\{\theta_j\mid j\in 1{:}s\}\subseteq (0,1/2]$ and $\varepsilon\in(0,1/2)$, we have
    \begin{multline*}
        \sum_{\bsk\in \mathcal{E}(\bsell,\bskappa) } |\hat{f}^w(\bsk)|^2
        \leq \\
        C_{\varepsilon,p,b,\alpha}^{|\supp(\bsell)|} b^{-2\alpha\Vert \bsell\Vert_1}\sum_{v\subseteq\supp(\bsell)}\Vert f_v \Vert^2_{W^{1,2}_{\mix}(\real^s,\varphi)}\prod_{j\in v} \theta^{-2\alpha(1+\varepsilon)}_j\prod_{j\in \supp(\bsell)\setminus v} \theta^{1-2\alpha}_j,
    \end{multline*}
    where $C_{\varepsilon,p,b,\alpha}$ is a constant depending on $\varepsilon,p,b$ and $\alpha$, and
    $$\mathcal{E}(\bsell,\bskappa)=\Big\{\bsk'\in \natu_0^{s}\mid \supp(\bsk')=\supp(\bsell), \kappa_j b^{\ell_j-1} \leq k'_j < (\kappa_j+1)b^{\ell_j-1} \ \forall j\in \supp(\bsell)\Big\}.$$
\end{theorem}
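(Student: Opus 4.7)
The plan is to combine Theorem~\ref{thm:fkbound}, specialized to the $b$-adic Walsh basis, with the norm bound of Theorem~\ref{thm:Wqcase}. From Example~\ref{example} I have $\phi_k={}_b\walk$ with $C_\phi=1$, $\mathcal{N}(k)=\kappa_r b^{r-1}$ and $|\lambda(k)|\le b^{-r+1}$, where $r$ and $\kappa_r$ come from the base-$b$ expansion of $k$. The features I exploit are that $\lambda(k)$ depends only on $\mathcal{N}(k)$, so it is constant on each fibre $\mathcal{N}^{-1}(\ell^*)$, and that $|\mathcal{N}^{-1}(\kappa b^{\ell-1})|=b^{\ell-1}$. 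After reindexing $\ell_j^*:=\kappa_j b^{\ell_j-1}$ for $j\in\supp(\bsell)$ (and $\ell_j^*=0$ otherwise), the set $\mathcal{E}(\bsell,\bskappa)$ of the statement is exactly the set $\mathcal{E}(\bsell^*)$ appearing in Theorem~\ref{thm:fkbound}. Writing $\lambda_j^*$ for the common value of $\lambda(k_j)$ on this fibre, Theorem~\ref{thm:fkbound} yields
$$\sum_{\bsk\in\mathcal{E}(\bsell,\bskappa)}|\hat{f}^w(\bsk)|^2\le 2^{|\supp(\bsell)|}\sum_{v\subseteq\supp(\bsell)}A_v\prod_{j\in v}|\lambda_j^*|^{2\alpha}\prod_{j\in\supp(\bsell)\setminus v}|\lambda_j^*|^{2\alpha}\mathcal{S}(\theta_j,\ell_j^*,\alpha),$$
where $A_v:=\Vert\partial^v f_v^w\Vert_{L^2(\itr^{|v|})}^{2\alpha}\Vert f_v^w\Vert_{L^2(\itr^{|v|})}^{2-2\alpha}$.

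The main algebraic step, which I expect to be the only non-routine part, is converting each factor $|\lambda_j^*|^{2\alpha}\mathcal{S}(\theta_j,\ell_j^*,\alpha)$ into a product of the form $\theta_j^{1-2\alpha}b^{-2\alpha\ell_j}$ up to constants. Because $\lambda$ is constant on the fibre of size $b^{\ell_j-1}$, the definition of $\mathcal{S}$ gives $\mathcal{S}(\theta_j,\ell_j^*,\alpha)\le 16 b^{\ell_j-1}\min(\theta_j^2,|\lambda_j^*|^2)|\lambda_j^*|^{-2\alpha}$, hence
$$|\lambda_j^*|^{2\alpha}\mathcal{S}(\theta_j,\ell_j^*,\alpha)\le 16 b^{\ell_j-1}\min\bigl(\theta_j^2,b^{-2(\ell_j-1)}\bigr).$$
A short two-case analysis according to whether $\theta_j\le b^{-(\ell_j-1)}$ upgrades this to $16 b^{2\alpha}\theta_j^{1-2\alpha}b^{-2\alpha\ell_j}$, where $\alpha<1/2$ is used in the case $\theta_j>b^{-(\ell_j-1)}$ to keep the exponent $1-2\alpha$ positive. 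For the remaining indices $j\in v$ I apply the trivial bound $|\lambda_j^*|^{2\alpha}\le b^{2\alpha}b^{-2\alpha\ell_j}$. Multiplying the $b^{-2\alpha\ell_j}$ over $j\in\supp(\bsell)$ collects into the advertised factor $b^{-2\alpha\Vert\bsell\Vert_1}$, and the remaining numerical constants absorb into $C_{\varepsilon,p,b,\alpha}^{|\supp(\bsell)|}$.

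It remains to bound $A_v$. By Lemma~\ref{lem:anova}, $f_v\in W^{1,2}_{\mix}(\real^s,\varphi)$; since $f_v$ is independent of $\bsx_{v^c}$, I may view it as a function on $\real^{|v|}$ without changing its norm. Applying Theorem~\ref{thm:Wqcase} with $q=2$ in dimension $|v|$ produces $\Vert\partial^v f_v^w\Vert_{L^2(\itr^{|v|})}\le\prod_{j\in v}C_{\varepsilon,p,2}\,\theta_j^{-1-\varepsilon}\Vert f_v\Vert_{W^{1,2}_{\mix}(\real^s,\varphi)}$, while \eqref{eqn:fwLqbound} gives $\Vert f_v^w\Vert_{L^2(\itr^{|v|})}\le 2^{|v|/2}\Vert f_v\Vert_{L^2(\real^s,\varphi)}$. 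Raising to the powers $2\alpha$ and $2-2\alpha$ and using $\Vert f_v\Vert_{L^2}\le\Vert f_v\Vert_{W^{1,2}_{\mix}}$ yields $A_v\le (C')^{|v|}\Vert f_v\Vert_{W^{1,2}_{\mix}(\real^s,\varphi)}^2\prod_{j\in v}\theta_j^{-2\alpha(1+\varepsilon)}$, matching the $\theta_j$ exponents on the $j\in v$ side of the target. Substituting everything back into the display from the first paragraph completes the proof.
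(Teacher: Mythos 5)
Your proposal is correct and takes essentially the same route as the paper: apply Theorem~\ref{thm:fkbound} to the Walsh basis after reindexing the fibre so that $\mathcal{E}(\bsell,\bskappa)=\mathcal{E}(\bsell^*)$ with $\ell_j^*=\kappa_j b^{\ell_j-1}$, exploit that $\lambda$ is constant and of size at most $b^{-\ell_j+1}$ on each fibre of size $b^{\ell_j-1}$ to convert $|\lambda_j^*|^{2\alpha}\mathcal{S}(\theta_j,\ell_j^*,\alpha)$ into $O(\theta_j^{1-2\alpha}b^{-2\alpha\ell_j})$ (where $1-2\alpha>0$ is used exactly as in the paper), and then bound $A_v$ via Lemma~\ref{lem:anova}, Equation~\eqref{eqn:fwLqbound} and Theorem~\ref{thm:Wqcase} with $q=2$. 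The only cosmetic difference is that the paper first bounds $b^{\ell_j-1}\le\lambda^{-1}$ to get $\mathcal{S}\le 16\theta_j^{1-2\alpha}$ and then absorbs $\prod_j\lambda^{2\alpha}\le b^{2\alpha|\supp(\bsell)|}b^{-2\alpha\Vert\bsell\Vert_1}$ at the very end, whereas you carry the two factors together per coordinate; the arithmetic is identical.
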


\begin{proof}
For $\phi_k(u)={}_b\walk (u)$, $C_\phi=\sup_{k\in \natu_0} \Vert \phi_k\Vert_{L^\infty(\itr)}=1$ and $\mathcal{N}^{-1}(\kappa_j b^{\ell_j-1})=\{k'\in\natu \mid \kappa_j b^{\ell_j-1} \leq k' < (\kappa_j+1)b^{\ell_j-1}\}$. It follows that $\lambda(k')=\lambda(\kappa_j b^{\ell_j-1})=\Vert\mathcal{I}(\phi_{\kappa_j b^{\ell_j-1}})\Vert_{L^\infty(\itr)}\leq b^{-\ell_j+1} $ for $k'\in \mathcal{N}^{-1}(\kappa_j b^{\ell_j-1})$. Hence
\begin{align*}
\mathcal{S}(\theta_j,\kappa_j b^{\ell_j-1},\alpha)=&16\sum_{k'\in\mathcal{N}^{-1}(\kappa_j b^{\ell_j-1}) } \min\big(\theta_j^2 , |\lambda(k')|^{2}\big)|\lambda(k')|^{-2\alpha} \\
=& 16 b^{\ell_j-1} \lambda(\kappa_j b^{\ell_j-1})^{-2\alpha} \min\big(\theta_j^2, \lambda(\kappa_j b^{\ell_j-1})^{2}\big) \\
\leq & 16  \min\big(\theta_j^2\lambda(\kappa_j b^{\ell_j-1})^{-1-2\alpha}, \lambda(\kappa_j b^{\ell_j-1})^{1-2\alpha}\big)\\
\leq & 16 \theta^{1-2\alpha}_j,
\end{align*}
where we have used $1-2\alpha>0$. Next, by Equation~\eqref{eqn:fwLqbound} with $q=2$,
    \begin{align}\label{eqn:fL2bound}
        \Vert f^w_v \Vert_{L^2(\itr^{|v|})}
        \leq   2^{|v|/2} \Vert f_v \Vert_{L^2(\real^s,\varphi)} \leq 2^{|v|/2} \Vert f_v \Vert_{W^{1,2}_{\mix}(\real^s,\varphi)}.
    \end{align}
    Meanwhile, by Theorem~\ref{thm:Wqcase} with $f=f_v$, $q=2$ and identifying $v$ with $1{:}|v|$,
    \begin{equation}\label{eqn:DfL2bound}
        \Vert\partial^{v} f_v^w \Vert_{L^2(\itr^{|v|})}\leq  \Big(\prod_{j\in v}C_{\varepsilon,p,2}\theta^{-1-\varepsilon}_j\Big)\Vert f_v \Vert_{W^{1,2}_{\mix}(\real^s,\varphi)}.
    \end{equation}
    Using the above bounds, Equation~\eqref{eqn:fwkbound} becomes
    \begin{align*}
        &\sum_{\bsk\in \mathcal{E}(\bsell,\bskappa) } |\hat{f}^w(\bsk)|^2\prod_{j\in\supp(\bsell)} |\lambda(k_j)|^{-2\alpha} =\Big(\prod_{j\in\supp(\bsell)} \lambda(\kappa_j b^{\ell_j-1}) \Big)^{-2\alpha} \sum_{\bsk\in \mathcal{E}(\bsell,\bskappa) } |\hat{f}^w(\bsk)|^2 \\
      \leq &  2^{|\supp(\bsell)|}\sum_{v\subseteq\supp(\bsell)}2^{(1-\alpha)|v|}\Big(\prod_{j\in v}C_{\varepsilon,p,2}\theta^{-1-\varepsilon}_j\Big)^{2\alpha}\Vert f_v \Vert^2_{W^{1,2}_{\mix}(\real^s,\varphi)} \prod_{j\in \supp(\bsell)\setminus v} 16 \theta^{1-2\alpha}_j\\
      \leq & \max\big( 2^{2-\alpha} C^{2\alpha}_{\varepsilon,p,2} , 32\big)^{|\supp(\bsell)|} \sum_{v\subseteq\supp(\bsell)}\Vert f_v \Vert^2_{W^{1,2}_{\mix}(\real^s,\varphi)}\prod_{j\in v} \theta^{-2\alpha(1+\varepsilon)}_j\prod_{j\in \supp(\bsell)\setminus v} \theta^{1-2\alpha}_j.
    \end{align*}
    After bounding $\lambda(\kappa_j b^{\ell_j-1}) \leq b^{-\ell_j+1}$, we conclude
    \begin{multline*}
     \sum_{\bsk\in \mathcal{E}(\bsell,\bskappa) } |\hat{f}^w(\bsk)|^2 \leq \\ C_{\varepsilon,p,b,\alpha}^{|\supp(\bsell)|} b^{-2\alpha\Vert\bsell\Vert_1}  \sum_{v\subseteq\supp(\bsell)}\Vert f_v \Vert^2_{W^{1,2}_{\mix}(\real^s,\varphi)}\prod_{j\in v} \theta^{-2\alpha(1+\varepsilon)}_j\prod_{j\in \supp(\bsell)\setminus v} \theta^{1-2\alpha}_j
    \end{multline*}
    with 
    $C_{\varepsilon,p,b,\alpha}=\max\big( 2^{2-\alpha} C^{2\alpha}_{\varepsilon,p,2}  , 32\big)b^{2\alpha}$.
\end{proof}


The next theorem is the counterpart of Theorem~\ref{thm:Lqfwkbound} for $f\in W^{1,q,\infty}_{\mix}(\real^s,\varphi)$.

\begin{theorem}\label{thm:Lqinffwkbound}
For $\phi_k(u)={}_b\walk (u)$, $\alpha\in (0,1/2)$, $\bsell\in \natu_0^{s}\setminus\{\bszero\}$, $\bskappa\in \{1,\dots,b-1\}^{s}$, $f\in W^{1,q,\infty}_{\mix}(\real^s,\varphi)$ with $q\in (1,2]$, $w_j=w_{\theta_j}$ for  $\{\theta_j\mid j\in 1{:}s\}\subseteq (0,1/2]$ and $\varepsilon\in (0,(q-1)/(q+1))$, we have
    \begin{multline*}
        \sum_{\bsk\in \mathcal{E}(\bsell,\bskappa) } |\hat{f}^w(\bsk)|^2
        \leq \\
        C_{\varepsilon,p,q,b,\alpha}^{|\supp(\bsell)|} b^{-2\alpha\Vert \bsell\Vert_1}\sum_{v\subseteq\supp(\bsell)}\Vert f_v \Vert^2_{W^{1,q,\infty}_{\mix}(\real^s,\varphi)}\prod_{j\in v} \theta^{-(1+\varepsilon)(2\alpha-1+2/q)}_j\prod_{j\in \supp(\bsell)\setminus v} \theta^{1-2\alpha}_j,
    \end{multline*}
    where $C_{\varepsilon,p,q,b,\alpha}$ is a constant depending on $\varepsilon,p,q,b$ and $\alpha$.
\end{theorem}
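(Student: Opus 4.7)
The plan is to mirror the proof of Theorem~\ref{thm:Lqfwkbound}, but replace every invocation of Theorem~\ref{thm:Wqcase} (which is suited to $W^{1,q}_{\mix}(\real^s,\varphi)$ with $q=2$) by the corresponding bounds from Theorem~\ref{thm:Wqinfcase}, which apply to $f\in W^{1,q,\infty}_{\mix}(\real^s,\varphi)$ for $q\in(1,2]$. Since we still instantiate Theorem~\ref{thm:fkbound} with the $L^2$ norms of $f^w_v$ and $\partial^v f^w_v$, the natural choice is $q'=2$ in Theorem~\ref{thm:Wqinfcase}.

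First I would reuse, verbatim, the Walsh-function ingredients from the proof of Theorem~\ref{thm:Lqfwkbound}: namely $C_\phi=1$, the identification $\mathcal{N}^{-1}(\kappa_j b^{\ell_j-1})=\{k'\in\natu:\kappa_j b^{\ell_j-1}\le k'<(\kappa_j+1)b^{\ell_j-1}\}$, the bound $|\lambda(k')|\le b^{-\ell_j+1}$ on this block, and therefore
\[
\mathcal{S}(\theta_j,\kappa_j b^{\ell_j-1},\alpha)\le 16\,\theta_j^{1-2\alpha},
\]
which is unchanged because it only used $0<\alpha<1/2$. Next I would invoke Lemma~\ref{lem:anova} (its $W^{1,q,\infty}_{\mix}$ half) to secure $f_v\in W^{1,q,\infty}_{\mix}(\real^s,\varphi)$ for every $v\subseteq 1{:}s$, so that Theorem~\ref{thm:Wqinfcase} can be applied to each ANOVA component. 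Taking $q'=2$ there yields
\[
\Vert f^w_v\Vert_{L^2(\itr^{|v|})}\le \prod_{j\in v} C_{\varepsilon,p,q}\,\theta_j^{-(1+\varepsilon)(1/q-1/2)}\,\Vert f_v\Vert_{L^{q,\infty}(\real^s,\varphi)}
\]
and
\[
\Vert \partial^v f^w_v\Vert_{L^2(\itr^{|v|})}\le \prod_{j\in v} C'_{\varepsilon,p,q}\,\theta_j^{-(1+\varepsilon)(1/2+1/q)}\,\Vert f_v\Vert_{W^{1,q,\infty}_{\mix}(\real^s,\varphi)},
\]
after identifying $v$ with $1{:}|v|$ exactly as in the $W^{1,q}_{\mix}$ case.

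Plugging these into the interpolation bound \eqref{eqn:fwkbound} of Theorem~\ref{thm:fkbound} and collecting the $\theta_j$-exponents for $j\in v$ gives
\[
2\alpha\bigl(1/2+1/q\bigr)+(2-2\alpha)\bigl(1/q-1/2\bigr)=2\alpha-1+2/q,
\]
so the total exponent becomes $(1+\varepsilon)(2\alpha-1+2/q)$, matching the statement; the factor $\prod_{j\in\supp(\bsell)\setminus v}\theta_j^{1-2\alpha}$ comes from the $\mathcal{S}$-bound, and the factor $b^{-2\alpha\Vert\bsell\Vert_1}$ comes from extracting $\lambda(\kappa_j b^{\ell_j-1})^{-2\alpha}\le b^{(\ell_j-1)\cdot 2\alpha}$ as in Theorem~\ref{thm:Lqfwkbound}. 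All remaining dimension-dependent constants can be absorbed into a single $C_{\varepsilon,p,q,b,\alpha}^{|\supp(\bsell)|}$ factor in the same fashion as the $\max(\cdots)^{|\supp(\bsell)|}$ step at the end of Theorem~\ref{thm:Lqfwkbound}.

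I do not anticipate a serious obstacle: the argument is essentially bookkeeping of $\theta_j$-exponents. The only thing to watch is the compatibility of the $\varepsilon$-range: Theorem~\ref{thm:Wqinfcase} requires $\varepsilon\in(0,(q-1)/(q+1))$, which is precisely the range assumed in the statement of Theorem~\ref{thm:Lqinffwkbound}, and $p\ge 1$ suffices (with the $p$-dependence in Theorem~\ref{thm:Wqinfcase}'s constants already absorbed into $C_{\varepsilon,p,q,b,\alpha}$). The most delicate bookkeeping step is verifying that the interpolated exponent indeed simplifies to $(1+\varepsilon)(2\alpha-1+2/q)$, which is the one line of algebra displayed above.
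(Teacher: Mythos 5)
Your proposal follows the same route as the paper's own proof: Theorem~\ref{thm:Lqinffwkbound} is proved exactly by repeating the argument of Theorem~\ref{thm:Lqfwkbound} with the two $L^2$-norm bounds \eqref{eqn:fL2bound} and \eqref{eqn:DfL2bound} replaced by the corresponding $q'=2$ instances of Theorem~\ref{thm:Wqinfcase}, and your exponent bookkeeping $2\alpha(1/2+1/q)+(2-2\alpha)(1/q-1/2)=2\alpha-1+2/q$ is precisely the calculation that delivers the stated power of $\theta_j$. (The one inequality you wrote as $\lambda(\kappa_j b^{\ell_j-1})^{-2\alpha}\le b^{2\alpha(\ell_j-1)}$ points the wrong way; the step actually used is $\lambda(\kappa_j b^{\ell_j-1})^{2\alpha}\le b^{-2\alpha(\ell_j-1)}$ after moving the $\lambda^{-2\alpha}$ factor to the other side, but this is a typo rather than a gap.)
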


    \begin{proof}
        The proof is essentially the same as that of Theorem~\ref{thm:Lqfwkbound}, except we replace Equation~\eqref{eqn:fL2bound} with
        $$\Vert f_v^w \Vert_{L^{2}(\itr^{|v|})} \leq  \Big(\prod_{j\in v}C_{\varepsilon,p,q,2}\theta^{-(1+\varepsilon)(1/q-1/2)}_j\Big)\Vert f_v \Vert_{W^{1,q,\infty}_{\mix}(\real^s,\varphi)}  $$
    and Equation~\eqref{eqn:DfL2bound} with
    $$\Vert\partial^{v} f_v^w \Vert_{L^{2}(\itr^{|v|})}\leq  \Big(\prod_{j\in v}C'_{\varepsilon,p,q,2}\theta^{-(1+\varepsilon)(1/q+1/2)}_j\Big)\Vert f_v \Vert_{W^{1,q,\infty}_{\mix}(\real^s,\varphi)}, $$
    where $C_{\varepsilon,p,q,2}$ and $C'_{\varepsilon,p,q,2}$ come from Theorem~\ref{thm:Wqinfcase}.
    \end{proof}

    We are ready to bound the variance of $\hat{\mu}$ for digital nets. To simplify the notation, we let 
    \begin{equation*}
     \Vert f\Vert_{q}=\begin{cases}
        \sup_{ v\subseteq 1{:}s}\Vert f_v \Vert_{W^{1,q}_{\mix}(\real^s,\varphi)}, &\text{ if } f\in W^{1,q}_{\mix}(\real^s,\varphi)\\
        \sup_{v\subseteq 1{:}s}\Vert f_v \Vert_{W^{1,q,\infty}_{\mix}(\real^s,\varphi)},&\text{ if } f\in W^{1,q,\infty}_{\mix}(\real^s,\varphi)
    \end{cases},   
    \end{equation*}
    and for $v \subseteq 1{:}s$
    \begin{equation*}
     \gamma_v=\begin{cases}
        \Vert f\Vert^{-1}_{q} \Vert f_v \Vert_{W^{1,q}_{\mix}(\real^s,\varphi)}, &\text{ if } f\in W^{1,q}_{\mix}(\real^s,\varphi)\\
        \Vert f\Vert^{-1}_{q} \Vert f_v \Vert_{W^{1,q,\infty}_{\mix}(\real^s,\varphi)},&\text{ if } f\in W^{1,q,\infty}_{\mix}(\real^s,\varphi)
    \end{cases}.   
    \end{equation*}

    \begin{theorem}\label{thm:qmcvar}
    Let $\{\bsu_0,\dots,\bsu_{n-1}\}$ be a scrambled digital net in base $b\ge 2$ with $t$-quality parameters $\{t_\omega\mid \emptyset\neq \omega\subseteq 1{:}s\}$.
        If $f\in W^{1,q}_{\mix}(\real^s,\varphi)$ with $q=2$ and $w_j,\varepsilon$ satisfy the assumptions of Theorem~\ref{thm:Lqfwkbound}, or if $f\in W^{1,q,\infty}_{\mix}(\real^s,\varphi)$ with $q\in (1,2]$ and $w_j,\varepsilon$ satisfy the assumptions of Theorem~\ref{thm:Lqinffwkbound}, then for any $\alpha\in (0,1/2)$, 
        $$ \e \Big|\frac{1}{n}\sum_{i=0}^{n-1}f^w(\bsu_i)-\int_{\real^s} f(\bsx) \prod_{j=1}^s \varphi(x_j)\rd \bsx\Big|^2 
            \leq \frac{\Vert f\Vert^2_q }{b^{(1+2\alpha)m}}\sum_{\emptyset \neq \omega\subseteq 1{:}s }  C^{|\omega|}_{*} m^{|\omega|-1}\Tilde{\gamma}_\omega,$$
        where $C_{*}$ is a constant depending on $\varepsilon,p,q,b$ and $\alpha$, and
        $$\Tilde{\gamma}_\omega=b^{(1+2\alpha)t_w} \sum_{v\subseteq \omega}\gamma_v^2\prod_{j\in v} \theta^{-(1+\varepsilon)(2\alpha-1+2/q)}_j \prod_{j\in \omega\setminus v} \theta^{1-2\alpha}_j.$$
    \end{theorem}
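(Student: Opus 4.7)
The plan is to feed the Walsh-coefficient bound from Theorem~\ref{thm:Lqfwkbound} or Theorem~\ref{thm:Lqinffwkbound} into the scrambled-net variance formula of Lemma~\ref{lem:qmcvar} and then carry out the remaining sum over $\bsell\in\natu^\omega$. The first step is to observe that the index set $L_{\bsell}$ appearing in the definition of $\sigma^2_{\bsell}$ decomposes disjointly by leading digit as $L_{\bsell}=\bigsqcup_{\bskappa\in\{1,\dots,b-1\}^{|\omega|}}\mathcal{E}(\bsell,\bskappa)$ where $\omega=\supp(\bsell)$; hence $\sigma^2_{\bsell}$ is a sum of at most $(b-1)^{|\omega|}$ inner sums, each bounded by the right-hand side of Theorem~\ref{thm:Lqfwkbound} or Theorem~\ref{thm:Lqinffwkbound}. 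The two smoothness hypotheses unify because for $q=2$ one has $2\alpha-1+2/q=2\alpha$, so the exponent $-(1+\varepsilon)(2\alpha-1+2/q)$ of $\theta_j$ appearing in Theorem~\ref{thm:Lqinffwkbound} specializes to $-2\alpha(1+\varepsilon)$ in the $W^{1,2}_{\mix}$ case of Theorem~\ref{thm:Lqfwkbound}, so both bounds can be written in the single form used to define $\Tilde{\gamma}_\omega$.

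After substituting, the only remaining $\bsell$-dependence in Lemma~\ref{lem:qmcvar} is the product of $b^{-2\alpha\Vert\bsell\Vert_1}$ with the indicator $\bsone\{\Vert\bsell\Vert_1>m-t_\omega-|\omega|\}$. Using the stars-and-bars identity $\#\{\bsell\in\natu^\omega:\Vert\bsell\Vert_1=L\}=\binom{L-1}{|\omega|-1}$, the tail sum becomes
\[
\mathcal{T}_\omega := \sum_{L>m-t_\omega-|\omega|}\binom{L-1}{|\omega|-1}b^{-2\alpha L}.
\]
Bounding $\binom{L-1}{|\omega|-1}\leq L^{|\omega|-1}/(|\omega|-1)!$ and writing $L=L_0+j$ with $L_0=\max(|\omega|,m-t_\omega-|\omega|+1)$ factors out $b^{-2\alpha L_0}$ and leaves a polynomial-times-geometric sum in $j$, yielding $\mathcal{T}_\omega\leq C^{|\omega|}m^{|\omega|-1}b^{2\alpha(t_\omega+|\omega|)}b^{-2\alpha m}$ for some constant $C$ depending on $\alpha$ and $b$.

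Combining the prefactor $1/n=b^{-m}$ with the $b^{t_\omega}(b/(b-1))^{|\omega|-1}$ from $\Gamma_{\omega,\bsell}$, the $(b-1)^{|\omega|}$ from the leading-digit decomposition, the $C_{\varepsilon,p,b,\alpha}^{|\omega|}$ (or $C_{\varepsilon,p,q,b,\alpha}^{|\omega|}$) from Theorem~\ref{thm:Lqfwkbound}/\ref{thm:Lqinffwkbound}, and the tail bound $\mathcal{T}_\omega$ produces the advertised $b^{-(1+2\alpha)m}$ prefactor together with a single $b^{(1+2\alpha)t_\omega}$ weight (from $b^{t_\omega}\cdot b^{2\alpha t_\omega}$) and an $m^{|\omega|-1}$ factor. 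The inner sum over $v\subseteq\omega$ in Theorem~\ref{thm:Lqfwkbound}/\ref{thm:Lqinffwkbound}, after dividing and multiplying by $\Vert f\Vert_q^2$, is exactly the sum appearing in the definition of $\Tilde{\gamma}_\omega$; what remains is to absorb all $\omega$-uniform constants into a single $C_*^{|\omega|}$.

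The main obstacle I anticipate is the tail estimate for $\mathcal{T}_\omega$: I have to verify that the $m^{|\omega|-1}$ factor is tight (and not $m^{|\omega|}$) and that the cut-off $L>m-t_\omega-|\omega|$ genuinely dominates the lower bound $L\geq|\omega|$ in the regime of interest, so that the clean $b^{2\alpha(t_\omega+|\omega|)}b^{-2\alpha m}$ extraction is valid. Beyond this, merging the two Walsh-coefficient bounds into a single statement and tracking which multiplicative factors ($b$, $b-1$, and the constants of Theorems~\ref{thm:Wqcase} and \ref{thm:Wqinfcase}) get absorbed into $C_*$ is a careful but routine bookkeeping task.
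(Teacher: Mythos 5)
Your proposal follows the paper's proof essentially step for step: decompose $L_{\bsell}$ into the $(b-1)^{|\omega|}$ disjoint leading-digit sets $\mathcal{E}(\bsell,\bskappa)$, apply Theorem~\ref{thm:Lqfwkbound} (or Theorem~\ref{thm:Lqinffwkbound}) to bound $\sigma^2_{\bsell}$, feed this into Lemma~\ref{lem:qmcvar}, and control the resulting tail sum over $\bsell$ by stars-and-bars and a polynomial-times-geometric estimate, with all remaining factors absorbed into $C_*^{|\omega|}$. The only cosmetic difference is that the paper invokes \cite[Lemma 13.24]{dick:pill:2010} for the tail bound instead of doing the $\binom{L-1}{|\omega|-1}\le L^{|\omega|-1}/(|\omega|-1)!$ estimate by hand, and your observation that $2\alpha-1+2/q=2\alpha$ at $q=2$ unifies the two smoothness cases is correct and implicit in the paper's statement; the "obstacle" you flag about the cutoff $L_0=\max(|\omega|,m-t_\omega-|\omega|+1)$ resolves harmlessly since $b^{-2\alpha|\omega|}\le b^{2\alpha(t_\omega+|\omega|)}b^{-2\alpha m}$ whenever $m\le t_\omega+2|\omega|$.
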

    \begin{proof}
        First we notice that $L_{\bsell}$ defined by Equation~\eqref{eqn:Ll} is the union of $\mathcal{E}(\bsell,\bskappa)$ for $\bskappa$ ranging over $\{1,\dots,b-1\}^s$. Since $\mathcal{E}(\bsell,\bskappa)$ does not depend on $\kappa_j$ for $j\notin \supp(\bsell)$, there are $(b-1)^{|\supp(\bsell)|}$ number of disjoint $\mathcal{E}(\bsell,\bskappa)$ and 
        \begin{align*}
            \sigma^2_{\bsell}=&\sum_{\bsk\in L_{\bsell}} |\hat{f}(\bsk)|^2 \\
            \leq & C_{10}^{|\supp(\bsell)|} b^{-2\alpha\Vert \bsell\Vert_1}\Vert f\Vert^2_q\sum_{v\subseteq\supp(\bsell)} \gamma_v^2 \prod_{j\in v} \theta^{-(1+\varepsilon)(2\alpha-1+2/q)}_j\prod_{j\in \supp(\bsell)\setminus v} \theta^{1-2\alpha}_j,
        \end{align*}
        where $C_{10}=(b-1)C_{\varepsilon,p,b,\alpha}$ if $f\in W^{1,q}_{\mix}(\real^s,\varphi)$ and $C_{10}=(b-1)C_{\varepsilon,p,q,b,\alpha}$ if $f\in W^{1,q,\infty}_{\mix}(\real^s,\varphi)$. 
        Then by Lemma~\ref{lem:qmcvar}, 
        \begin{align*}
            &\e \Big|\frac{1}{n}\sum_{i=0}^{n-1}f^w(\bsu_i)-\int_{\itr^s} f^w(\bsu)\rd \bsu\Big|^2  \\
            \leq & \frac{1}{b^m} \sum_{\emptyset\neq \omega \subseteq 1{:}s}\sum_{\bsell\in \natu^\omega}\Gamma_{\omega,\bsell} C_{10}^{|\omega|} b^{-2\alpha\Vert \bsell\Vert_1}\Vert f\Vert^2_q\sum_{v\subseteq \omega}\gamma_v^2\prod_{j\in v} \theta^{-(1+\varepsilon)(2\alpha-1+2/q)}_j\prod_{j\in \omega\setminus v} \theta^{1-2\alpha}_j\nonumber\\
            =&\frac{\Vert f\Vert^2_q}{b^m} \sum_{\emptyset\neq \omega \subseteq 1{:}s}C_{10}^{|\omega|}\Big(\sum_{\bsell\in \natu^\omega} \Gamma_{\omega,\bsell}b^{-2\alpha\Vert \bsell\Vert_1}\Big)\sum_{v\subseteq \omega}\gamma_v^2\prod_{j\in v} \theta^{-(1+\varepsilon)(2\alpha-1+2/q)}_j\prod_{j\in \omega\setminus v} \theta^{1-2\alpha}_j.\nonumber
        \end{align*}
        By Equation~\eqref{eqn:gaincoefbound}, 
        \begin{align*}
          \sum_{\bsell\in \natu^\omega} \Gamma_{\omega,\bsell}b^{-2\alpha\Vert \bsell\Vert_1}\leq &\Big(\frac{b}{b-1}\Big)^{|\omega|-1}b^{t_{\omega}}\sum_{\bsell\in \natu^{\omega}}b^{-2\alpha\Vert \bsell\Vert_1}\bsone\{\Vert \bsell\Vert_1> m-t_{\omega}-|\omega|\}.
        \end{align*}
        Because there are ${N-1\choose |\omega|-1}$ number of $\bsell\in \natu^{\omega}$ satisfying $\Vert \bsell\Vert_1=N$ for $N\geq |\omega|$, 
        \begin{align*}
            &\sum_{\bsell\in \natu^{\omega}}b^{-2\alpha\Vert \bsell\Vert_1} \bsone\{\Vert \bsell\Vert_1> m-t_{\omega}-|\omega|\}\leq \sum_{N=\max(m-t_{\omega}-|\omega|,|\omega|)}^\infty {N-1\choose |\omega|-1}b^{-2\alpha N}\\
            \leq & \frac{b^{-2\alpha (m-t_\omega-|\omega|)}}{(1-b^{-2\alpha})^{|\omega|}}  \max\left({m-t_\omega-|\omega|-1\choose |\omega|-1},1\right),
        \end{align*}
        where the last inequality follows from \cite[Lemma 13.24]{dick:pill:2010}. Plugging the above bound,
        \begin{align*}
          \sum_{\bsell\in \natu^\omega} \Gamma_{\omega,\bsell}b^{-2\alpha\Vert \bsell\Vert_1}\leq &\Big(\frac{b}{b-1}\Big)^{|\omega|-1}b^{t_{\omega}}\frac{b^{-2\alpha (m-t_\omega-|\omega|)}}{(1-b^{-2\alpha})^{|\omega|}} m^{|\omega|-1}\leq \frac{C^{|\omega|}_{11}m^{|\omega|-1}b^{(1+2\alpha)t_\omega}}{b^{2\alpha m}}
        \end{align*}
        for $C_{11}$ depending on $b$ and $\alpha$, and
        \begin{align*}
            &\e \Big|\frac{1}{n}\sum_{i=0}^{n-1}f^w(\bsu_i)-\int_{\itr^s} f^w(\bsu)\rd \bsu\Big|^2  \\
            \leq 
            &\frac{\Vert f\Vert^2_q}{b^{(1+2\alpha)m}}\sum_{\emptyset\neq \omega \subseteq 1{:}s}C_{*}^{|\omega|}m^{|\omega|-1}b^{(1+2\alpha)t_\omega}\sum_{v\subseteq \omega}\gamma_v^2\prod_{j\in v} \theta^{-(1+\varepsilon)(2\alpha-1+2/q)}_j\prod_{j\in \omega\setminus v} \theta^{1-2\alpha}_j
        \end{align*}
        for $C_{*}=C_{10}C_{11}$. The conclusion follows from Equation~\eqref{eqn:intfw}.
    \end{proof}
\begin{remark}
    If $f\in W^{1,q,\infty}_{\mix}(\real^s,\varphi)$ with $q>2$, applying Lemma~\ref{lem:embedding} gives $f\in W^{1,q}_{\mix}(\real^s,\varphi)$ with $q=2$ so that Theorem~\ref{thm:qmcvar} applies. As shown in Remark~\ref{rem:rqmcrate}, when the density is standard Gaussian, simple RQMC without IS has a root mean squared error rate of $O(n^{-1+1/q+\epsilon})$ for arbitrarily small $\epsilon>0$, while our proposed boundary-damping IS improves the rate to $O(n^{-1+\epsilon})$.
\end{remark}

    \begin{corollary}\label{cor:qmcvar}
        Suppose $t$-quality parameters $\{t_\omega\mid \emptyset\neq \omega\subseteq 1{:}s\}$  satisfy Equation~\eqref{eqn:tutj} and $\{\gamma_v\mid v\subseteq 1{:}s\}$ satisfy 
        $$\gamma_v\leq \prod_{j\in v}\Gamma_j \ \forall v\subseteq 1{:}s$$
        for $\{\Gamma_j\mid j\in 1{:}s\}$. 
        Then under the assumptions of Theorem~\ref{thm:qmcvar},
        $$\e \Big|\frac{1}{n}\sum_{i=0}^{n-1}f^w(\bsu_i)-\int_{\real^s} f(\bsx) \prod_{j=1}^s \varphi(x_j)\rd \bsx\Big|^2 \leq \frac{\Vert f\Vert^2_q }{b^{(1+2\alpha)m}m}\prod_{j=1}^s \Big(1+C_*\Tilde{\Gamma}_j m\Big),$$
        where $C_*$ comes from Theorem~\ref{thm:qmcvar} and
        $$\Tilde{\Gamma}_j=b^{(1+2\alpha)t_j} (\Gamma^2_j \theta^{-(1+\epsilon)(2\alpha-1+2/q) }_j+\theta^{1-2\alpha}_j).$$
    \end{corollary}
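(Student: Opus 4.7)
The plan is to start from the bound provided by Theorem~\ref{thm:qmcvar} and show that the sum over $\omega \subseteq 1{:}s$ telescopes into a product over coordinates, using the factorizability assumptions on $t_\omega$ and $\gamma_v$.

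First I would substitute the two structural assumptions into the expression for $\tilde\gamma_\omega$. Since $t_\omega \leq \sum_{j \in \omega} t_j$ by \eqref{eqn:tutj}, we have $b^{(1+2\alpha)t_\omega} \leq \prod_{j \in \omega} b^{(1+2\alpha)t_j}$. Since $\gamma_v^2 \leq \prod_{j \in v} \Gamma_j^2$, the inner sum over $v \subseteq \omega$ factorizes:
\begin{align*}
    \sum_{v\subseteq \omega}\gamma_v^2\prod_{j\in v} \theta^{-(1+\varepsilon)(2\alpha-1+2/q)}_j \prod_{j\in \omega\setminus v} \theta^{1-2\alpha}_j
    &\leq \prod_{j \in \omega}\Big(\Gamma_j^2 \theta_j^{-(1+\varepsilon)(2\alpha-1+2/q)} + \theta_j^{1-2\alpha}\Big).
\end{align*}
Combining these gives $\tilde\gamma_\omega \leq \prod_{j \in \omega} \tilde\Gamma_j$.

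Next I would insert this product bound into the sum from Theorem~\ref{thm:qmcvar}. Writing $m^{|\omega|-1} = m^{-1} m^{|\omega|}$, the remaining sum becomes
\begin{align*}
    \sum_{\emptyset\neq \omega \subseteq 1{:}s} C_*^{|\omega|} m^{|\omega|-1} \tilde\gamma_\omega
    &\leq \frac{1}{m}\sum_{\emptyset\neq \omega \subseteq 1{:}s}\prod_{j\in \omega}\bigl(C_* \tilde\Gamma_j m\bigr)
    \leq \frac{1}{m}\prod_{j=1}^s \bigl(1 + C_* \tilde\Gamma_j m\bigr),
\end{align*}
where in the last step I use the elementary identity $\sum_{\omega \subseteq 1{:}s} \prod_{j\in \omega} x_j = \prod_{j=1}^s (1+x_j)$ and discard the $\omega=\emptyset$ term to upper bound. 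Multiplying by the factor $\Vert f\Vert_q^2 / b^{(1+2\alpha)m}$ from Theorem~\ref{thm:qmcvar} yields the claimed inequality.

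There is no real obstacle here, as this is essentially a bookkeeping step once the separable structure has been recognized. The only subtlety to check is that the constant $C_*$ delivered by Theorem~\ref{thm:qmcvar} does not depend on $\omega$ or $s$, so that the same $C_*$ can legitimately be placed inside the product factor; this is ensured by the statement of Theorem~\ref{thm:qmcvar}, where $C_*$ depends only on $\varepsilon, p, q, b$ and $\alpha$.
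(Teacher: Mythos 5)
Your proof is correct and follows essentially the same route as the paper: factorize the inner sum over $v\subseteq\omega$ into a per-coordinate product to get $\tilde\gamma_\omega\le\prod_{j\in\omega}\tilde\Gamma_j$, then pull $m^{-1}$ out and telescope the outer sum over $\omega$ into $\prod_{j=1}^s(1+C_*\tilde\Gamma_j m)$. Your remark that $C_*$ is independent of $\omega$ and $s$ is the right thing to verify and is indeed guaranteed by the statement of Theorem~\ref{thm:qmcvar}.
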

    \begin{proof}
        First we compute
        \begin{align*}
            \Tilde{\gamma}_\omega\le&\Big(\prod_{j\in\omega }b^{(1+2\alpha)t_j}\Big) \sum_{v\subseteq \omega}\prod_{j\in v} \Gamma^2_j\theta^{-(1+\varepsilon)(2\alpha-1+2/q)}_j \prod_{j\in \omega\setminus v} \theta^{1-2\alpha}_j.\\
            =&\Big(\prod_{j\in\omega }b^{(1+2\alpha)t_j}\Big) \prod_{j\in \omega}\Big(\Gamma^2_j\theta^{-(1+\varepsilon)(2\alpha-1+2/q)}_j+\theta^{1-2\alpha}_j\Big)
            =\prod_{j\in\omega}\Tilde{\Gamma}_j.
        \end{align*}
        The conclusion follows from Theorem~\ref{thm:qmcvar} and
        \begin{align*}
         \sum_{\emptyset \neq \omega\subseteq 1{:}s }  C^{|\omega|}_{*} m^{|\omega|-1}\Tilde{\gamma}_\omega=\frac{1}{m}\sum_{\emptyset \neq \omega\subseteq 1{:}s }  C^{|\omega|}_{*} m^{|\omega|}\prod_{j\in\omega}\Tilde{\Gamma}_j\leq \frac{1}{m}\prod_{j=1}^s \Big(1+C_*\Tilde{\Gamma}_j m\Big). 
        \end{align*}
 \end{proof}
\begin{remark}\label{rmk:tractability}
    In settings where $s$ increases unboundedly while $\Vert f\Vert_q$ stays bounded, \cite[Lemma 3]{HICKERNELL2003286} shows if
    \begin{equation}\label{eqn:tractable}
      \lim_{s\to\infty}\sum_{j=1}^s\Tilde{\Gamma}_j<\infty,  
    \end{equation}
    then for any $\xi>0$, we can find $C_\xi$ independent of $s$ so that
    $$\prod_{j=1}^s \Big(1+C_*\Tilde{\Gamma}_j m\Big)\leq C_\xi b^{\xi m}$$
    and 
    $$\e \Big|\frac{1}{n}\sum_{i=0}^{n-1}f^w(\bsu_i)-\int_{\real^s} f(\bsx) \prod_{j=1}^s \varphi(x_j)\rd \bsx\Big|^2\leq C_\xi \Vert f\Vert^2_q b^{-(1+2\alpha-\xi)m}.$$

    For instance, when $\Gamma_j=O(j^{-\rho})$ for $\rho>2/q$ and $t_j=O(\log_b(j))$ as in the case of the Sobol' sequence and the Niederreiter sequence, 
    $$\Tilde{\Gamma}_j=O( j^{-2\rho+1+2\alpha} \theta^{-(1+\epsilon)(2\alpha-1+2/q) }_j+j^{1+2\alpha}\theta^{1-2\alpha}_j).$$
    By setting $\theta_j=\theta_0j^{-\rho q}$ with $\theta_0\in (0,1/2]$, a straightforward calculation shows
    $$\Tilde{\Gamma}_j=O( j^{-1-2(\rho q+1)(\alpha^*-\alpha)+\epsilon\rho q(2\alpha-1+2/q) }) \text{ for } \alpha^*=\frac{\rho q-2}{2(\rho q+1)}.$$
   It follows that the mean squared error of $\hat{\mu}$ converges at a dimension-independent rate arbitrarily close to $O(n^{-1-2\alpha^*})$ with the above choice of $\theta_j$.
\end{remark}

\section{Numerical experiments}\label{sec:numer}
To test our method, we consider standard Gaussian integrals with $\varphi(x)=\exp(-x^2/2)/\sqrt{2\pi}$, which satisfies Assumption~\ref{assump:rho}. In the experiments, we fix $\eta$ to be $\teta_1(u)=2^{-3} u^{-2}\exp(2-u^{-1})$. To compare the performance of $\hat{\mu}$ sharing the form~\eqref{eqn:muhat} under different choices of $T_j$, we fix $\bsu_0,\dots,\bsu_{n-1}$ to be linearly scrambled base-$2$ digital nets with direction numbers from \cite{joe:kuo:2008}. Each root mean squared error (RMSE) of $\hat{\mu}$ is estimated from $30$ independent runs.


We consider test functions of the form
$$f(\bsx)=\prod_{j=1}^s \left( 1+j^{-2} g(x_j) \right) \text{ for } g(x_j)=\frac{\exp(M x^2_j)}{\sqrt{1-2M}}-1.$$
We require $M<0.5$ so that $f\in L^1(\real^s,\varphi)$.
Because $\int_\real g(x)\varphi(x)\rd x=0$, a straightforward calculation using \eqref{eqn:anovadecomp} shows $f_{v}=\prod_{j\in v} j^{-2}g(x_j)$. Because $ g \in W^{1,q,\infty}_{\mix}(\real,\varphi)$ for $q\in (1,1/2M)$, $f\in W^{1,q,\infty}_{\mix}(\real^s,\varphi)$ and 
\begin{align*}
    \Vert f_v\Vert_{W^{1,q,\infty}_{\mix}(\real^s,\varphi)}=&\Big(\prod_{j\in v} j^{-2}\Big)\Big(\sum_{v'\subseteq v}  \Vert \partial^{v'} \prod_{j\in v}g(x_j)\Vert^q_{L^{q,\infty}(\real,\varphi)}\Big)^{1/q}\\
    =&\Big(\prod_{j\in v} j^{-2}\Big)\Vert g\Vert^{|v|}_{W^{1,q,\infty}_{\mix}(\real,\varphi)} .
\end{align*}
Since $f_\emptyset=1$, $\Vert f \Vert_q= \sup_{v\subseteq 1{:}s}\Vert f_v \Vert_{W^{1,q,\infty}_{\mix}(\real^s,\varphi)}\geq 1$ and 
\begin{equation*}
 \gamma_v=\Vert f\Vert^{-1}_{q} \Vert f_v \Vert_{W^{1,q,\infty}_{\mix}(\real^s,\varphi)}\leq \prod_{j\in v} \Gamma_j \text{ for }  \Gamma_j=j^{-2}\Vert g\Vert_{W^{1,q,\infty}_{\mix}(\real,\varphi)}.   
\end{equation*}
We can therefore infer the convergence of our method from Corollary~\ref{cor:qmcvar}.

Figure~\ref{fig:m00_comparison} shows the simulation results for $M=0$ and $s=5$ or $30$. In this case, $f(\bsx)=1$ and the usual inversion method with $T_j(u)=\Phi^{-1}(u)$ integrates $f$ exactly. We compare the performance of four options for $T_j$: 
\begin{itemize}
    \item Option 1: $T_j(u)$ in \eqref{eqn:Tj} with $\theta_j=0.1$.
    \item Option 2: $T_j(u)$ in \eqref{eqn:Tj} with $\theta_j=0.1/j^2$.
    \item Option 3: $T_j(u)=-\cot(\pi u)$.
    \item Option 4: $T_j(u)=a u-a(1-u)$.
\end{itemize}
Option 3 is the inverse CDF of a Cauchy distribution (also called the M{\"o}bius-transfor-mation in \cite{suzuki2025mobius}). Option 4 is the truncation method with $a=\sqrt{2\log n}$ suggested by \cite[Theorem 1b]{nuyens2023scaled} (we also tested $a=2\sqrt{\log n}$ suggested by \cite{dick2018optimal} and observed larger RMSEs). When $s=5$, all methods except Option 4 achieve a nearly $O(n^{-1})$ convergence rate, with Option 2 performing slightly better than Option 1 and 3. Option 4 seems already suffering from the dimensionality. When $s=30$, all methods are suffering from the high dimensionality, with Option 2 still maintaining a convergence rate close to $O(n^{-0.75})$. 

\begin{figure}[h!] 
    \centering 
    
    \begin{subfigure}[b]{0.48\textwidth} 
        \centering
        \includegraphics[width=\textwidth]{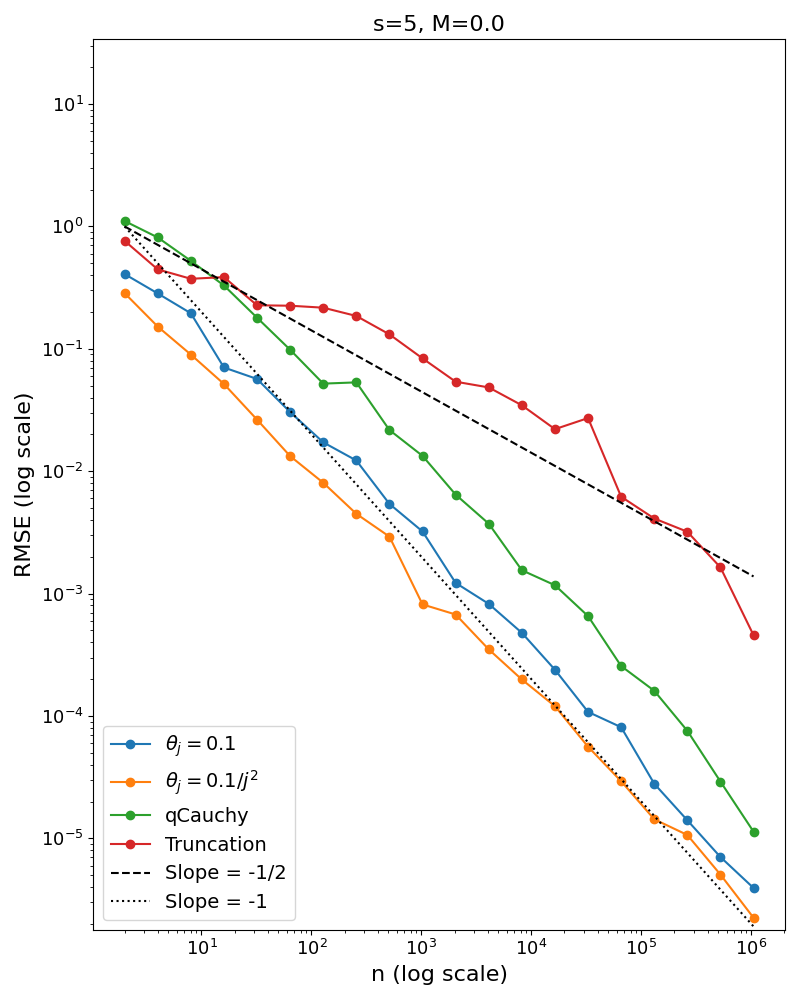}
        \caption{$s=5, M=0$}
        \label{fig:s5_m00} 
    \end{subfigure}
    \hfill 
    \begin{subfigure}[b]{0.48\textwidth} 
        \centering
        \includegraphics[width=\textwidth]{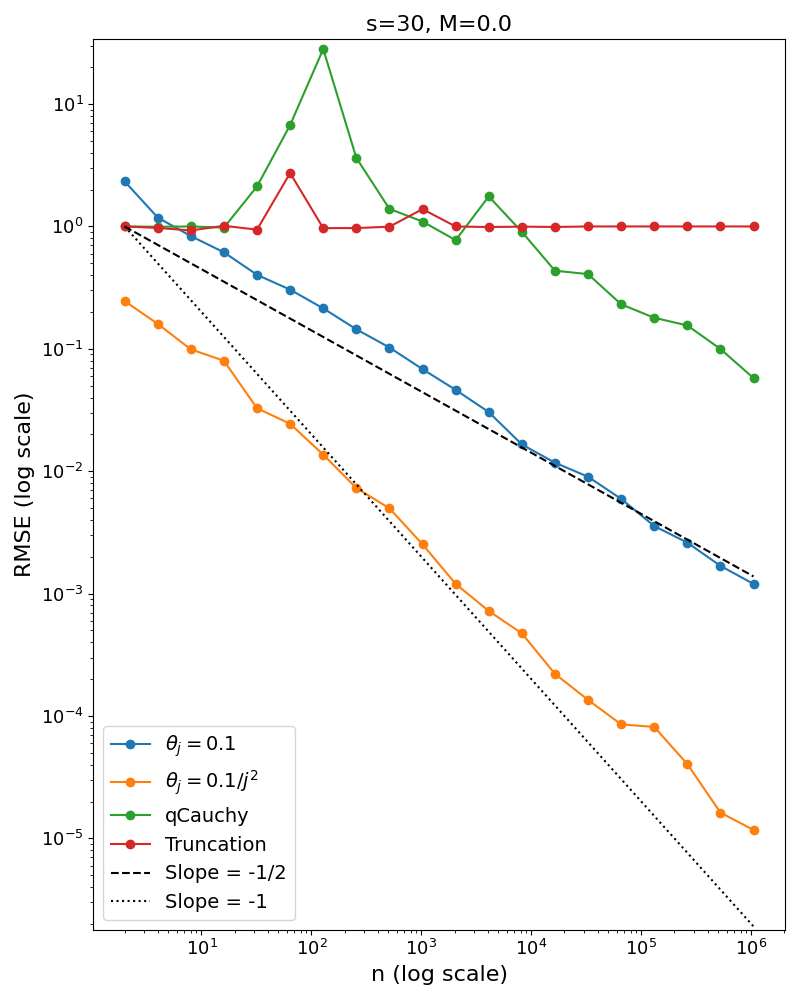}
        \caption{$s=30, M=0$}
        \label{fig:s30_m00}
    \end{subfigure}
    
    \caption{Comparison of RMSEs for $M=0.0$ with $s=5$ or $30$. The first four legend labels correspond to Options 1-4, and the two reference lines are  proportional to $n^{-1/2}$ and $n^{-1}$, respectively. }
    \label{fig:m00_comparison} 
\end{figure}

Figure~\ref{fig:m03_comparison} shows the simulation results for $M=0.3$ and $s=5$ or $30$. In this case, $f\notin L^2(\real^s,\varphi)$ and the plain Monte Carlo method for estimating $\mu$ has an infinite variance. We again compare the performance of Options 1-4. We follow \cite[Theorem 1b]{nuyens2023scaled} and set $a=\sqrt{5\log n}$ in Option 4. In addition to the previous four options, we also compare the usual inversion method (without importance sampling):
\begin{itemize}
    \item Option 5: $T_j(u)=\Phi^{-1}(u).$
\end{itemize}
By Remark~\ref{rem:rqmcrate}, the asymptotic convergence rate of Option 5 is close to $O(n^{-0.4})$, consistent with the simulation results. Options 1-3 perform similarly to the $M=0$ case, indicating that all of them are capable of handling the severe boundary growth.

\begin{figure}[h!] 
    \centering 
    
    \begin{subfigure}[b]{0.48\textwidth} 
        \centering
        \includegraphics[width=\textwidth]{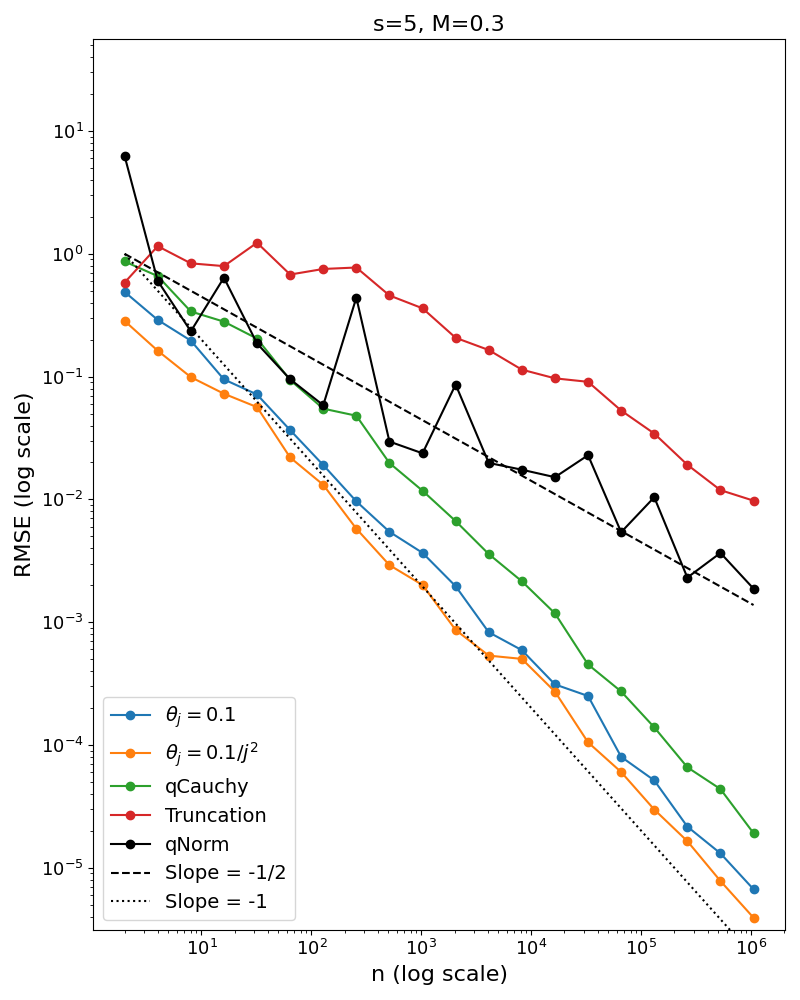}
        \caption{$s=5$, $M=0.3$}
        \label{fig:s5_m03} 
    \end{subfigure}
    \hfill 
    \begin{subfigure}[b]{0.48\textwidth} 
        \centering
        \includegraphics[width=\textwidth]{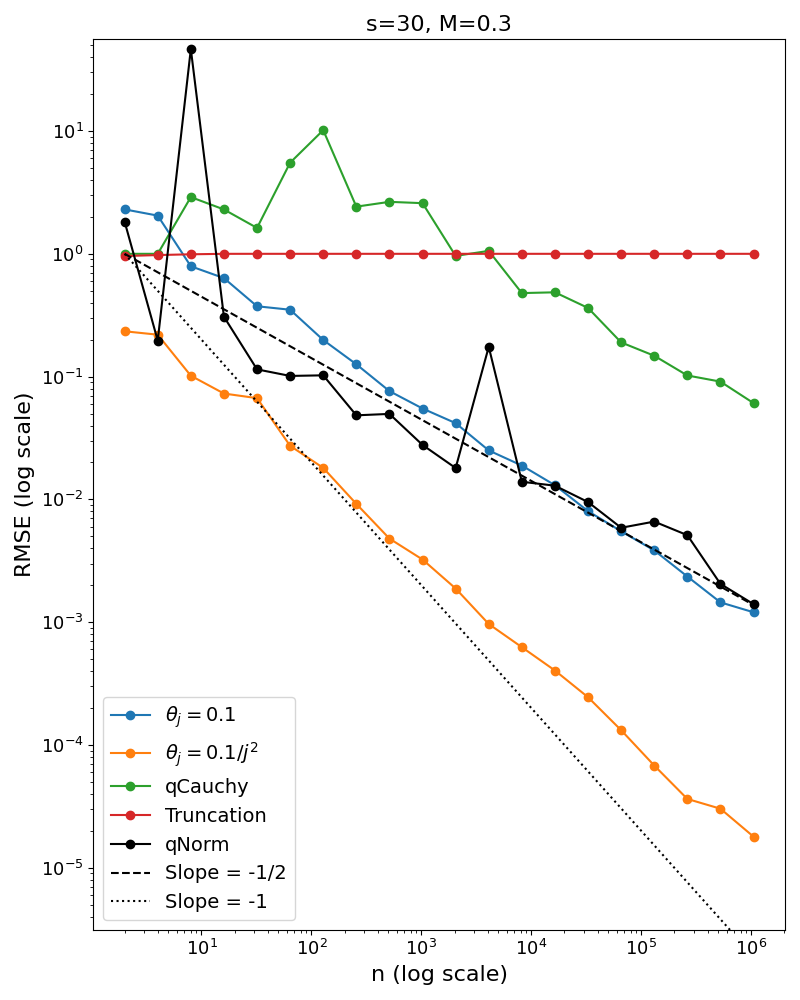}
        \caption{$s=30$, $M=0.3$}
        \label{fig:s30_m03}
    \end{subfigure}
    
    \caption{Comparison of RMSEs for $M=0.3$ with $s=5$ or $30$. The first five legend labels correspond to Options 1-5, and the two reference lines are  proportional to $n^{-1/2}$ and $n^{-1}$, respectively.  }
    \label{fig:m03_comparison} 
\end{figure}

Our final experiment studies how the choice of $\theta_j$ affects the performance of boundary-damping IS. We set $M=0.25$ so that $f\in W^{1,q,\infty}_{\mix}(\real^s,\varphi)$ for $q\in (1,2)$.  Our analysis in Remark~\ref{rmk:tractability} suggests $\theta_j=\theta_0 j^{-4}$  with $\theta_0\in (0,1/2]$ should produce a near-optimal decay in $\Tilde{\Gamma}_j$. We therefore compare the following three options for $T_j$:
\begin{itemize}
    \item Option 6: $T_j(u)$ in \eqref{eqn:Tj} with $\theta_j=0.1/j^2$.
    \item Option 7: $T_j(u)$ in \eqref{eqn:Tj} with $\theta_j=0.1/j^4$.
    \item Option 8: $T_j(u)$ in \eqref{eqn:Tj} with $\theta_j=0.1/j^6$.
\end{itemize}
We also use the usual inversion method Option 5 as a baseline. The results for $s=128$ are shown in Figure~\ref{fig:m025}. We see Options 6-8 significantly outperform the baseline, indicating our boundary-damping IS successfully accelerates the convergence in high-dimensional settings. We also observe Option 7 performs the best among Options 6-8, confirming our prediction.

\begin{figure}[h!] 
    \centering 
    \includegraphics[width=0.48\textwidth]{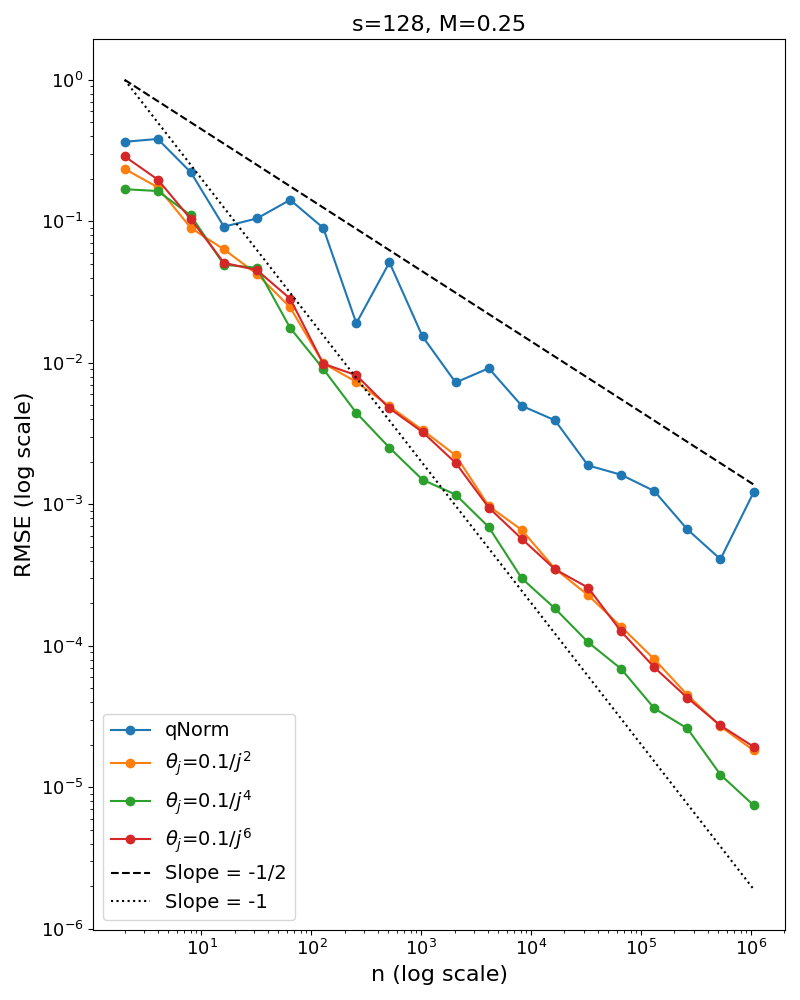}
    \caption{RMSEs for $M=0.25$ with $s=128$. The first four legend labels correspond to Options 5-8, and the two reference lines are  proportional to $n^{-1/2}$ and $n^{-1}$, respectively.  }
    \label{fig:m025} 
\end{figure}

\section{Concluding remarks}\label{sec:conclusion}

In this paper, we have proposed a new class of importance sampling methods suitable for RQMC integration of functions with severe boundary growth. Both our theoretical bounds and simulation results demonstrate a significant improvement in the convergence rates compared to previous methods.

As a limitation, our analysis does not extend to the usual inversion method by taking the limit $\theta_j\to 0$ for $j\in 1{:}s$. One reason is that as $\theta\to 0$, $w_{\theta}$ converges to $1$ 
pointwise over $\itr$ but not in the Sobolev norm $W^{1,2}_{\mix}(\itr)$. It is interesting to ask whether we can establish the convergence rates without bounding the Sobolev norm and hence bridge our method with the inversion method. We leave this question for future research.

Another limitation is that the convergence rates established in Theorem~\ref{thm:qmcvar} do not improve when $f\in W^{1,q}_{\mix}(\real^s,\varphi)$ or $W^{1,q,\infty}_{\mix}(\real^s,\varphi)$ with $q>2$. It is worth studying how our method performs on integrands with mild or even no boundary growth. In particular, it is an open question whether our method can reproduce the dimension-independent convergence rates in \cite{nichols:2014} under the same assumptions.

In \cite{suzuki2025mobius}, the authors prove the M{\"o}bius-transformed trapezoidal rule achieve the optimal convergence rate in the one-dimensional $\rho$-weighted Sobolev spaces $W_{\rho}^{\alpha,2}(\real)$. We conjecture that trapezoidal rules combined with our boundary-damping IS can achieve the same convergence rates. A detailed analysis is beyond the scope of this paper and left for future study.

\section*{Appendix} 

This appendix contains the proofs of Lemmas~\ref{lem:embedding}, \ref{lem:anova}, and \ref{eqn:etabound}. We will need the following lemma.

\begin{lemma}\label{lem:tailbound}

    For $\varphi$ satisfying Assumption~\ref{assump:rho} and $\varepsilon\in (0,1)$,
    $$\int_\real \varphi(x)^\varepsilon\rd x\leq 2  c_\varepsilon^{-1}  \varepsilon^{-2}.$$
\end{lemma}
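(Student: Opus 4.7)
The plan is to exploit the symmetry of $\varphi$ and the substitution $u=\Phi(x)$ to reduce the integral to a power integral on $(0,1/2)$, where Assumption~\ref{assump:rho} converts directly into a pointwise bound. By symmetry of $\varphi$,
\[\int_\real \varphi(x)^\varepsilon\rd x = 2\int_{-\infty}^0 \varphi(x)^\varepsilon\rd x,\]
and the substitution $u=\Phi(x)$, $\rd u=\varphi(x)\rd x$, rewrites this as
\[2\int_0^{1/2} \varphi\bigl(\Phi^{-1}(u)\bigr)^{\varepsilon-1}\rd u.\]

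For $u\in(0,1/2]$ one has $\Phi^{-1}(u)\le 0$, so Assumption~\ref{assump:rho} applied with the same parameter $\varepsilon$ yields $\varphi(\Phi^{-1}(u))\ge c_\varepsilon u^{1+\varepsilon}$. Since $\varepsilon-1<0$, raising to this negative power flips the inequality to
\[\varphi(\Phi^{-1}(u))^{\varepsilon-1}\le c_\varepsilon^{\varepsilon-1}u^{(1+\varepsilon)(\varepsilon-1)}=c_\varepsilon^{\varepsilon-1}u^{\varepsilon^2-1}.\]
Because $\varepsilon^2-1>-1$, the resulting power integral converges and
\[2\int_0^{1/2} c_\varepsilon^{\varepsilon-1}u^{\varepsilon^2-1}\rd u = \frac{2c_\varepsilon^{\varepsilon-1}}{\varepsilon^2}\cdot 2^{-\varepsilon^2}\le \frac{2c_\varepsilon^{\varepsilon-1}}{\varepsilon^2}.\]

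To match the target constant $c_\varepsilon^{-1}$ rather than $c_\varepsilon^{\varepsilon-1}$, I would note that Assumption~\ref{assump:rho} merely requires the existence of \emph{some} positive $c_\varepsilon$: replacing $c_\varepsilon$ by $\min(c_\varepsilon,1)$ only weakens the inequality, so we may assume without loss of generality that $c_\varepsilon\le 1$. Under this normalization $c_\varepsilon^{\varepsilon-1}=c_\varepsilon^{-(1-\varepsilon)}\le c_\varepsilon^{-1}$, which yields the claimed bound $2c_\varepsilon^{-1}\varepsilon^{-2}$. No step presents a real obstacle; the argument is essentially a change of variables followed by a power integration, with the only subtlety being the WLOG normalization of $c_\varepsilon$ to absorb the exponent $\varepsilon-1$ into $-1$.
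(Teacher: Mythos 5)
Your proof is essentially the same as the paper's: exploit the symmetry $\varphi(x)=\varphi(-x)$, substitute $u=\Phi(x)$, apply the pointwise lower bound $\varphi(x)\geq c_\varepsilon\Phi(x)^{1+\varepsilon}$ from Assumption~\ref{assump:rho}, and evaluate the resulting power integral $\int_0^{1/2}u^{\varepsilon^2-1}\rd u=2^{-\varepsilon^2}/\varepsilon^2$. The only divergence is bookkeeping on the constant: you correctly observe that the computation naturally yields $c_\varepsilon^{\varepsilon-1}$ rather than $c_\varepsilon^{-1}$ and patch it by normalizing $c_\varepsilon\leq 1$ without loss of generality, whereas the paper writes $c_\varepsilon^{-1}$ directly at the corresponding step without remarking on this; your version is the more careful one.
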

\begin{proof} Because $\varphi(x)=\varphi(-x)$, we know that $\Phi(0)=1/2$ and
    \begin{align*}  
        \int_\real \varphi(x)^\varepsilon\rd x = 2\int_{-\infty}^0 \frac{\varphi(x) }{\varphi(x)^{1-\varepsilon}} \rd x \leq 2 c_\varepsilon^{-1} \int_{-\infty}^0 \frac{1}{\Phi(x)^{(1-\varepsilon)(1+\varepsilon)}} \rd \Phi(x)= 2  c_\varepsilon^{-1}  \varepsilon^{-2} 2^{-\varepsilon^2}.
    \end{align*}
\end{proof}

\begin{proof}[Proof of Lemma \ref{lem:embedding}]
   Note that for any $\bsx \in \real^s$, the following inequality holds:
$$|f(\bsx)|^q \prod_{j=1}^s \varphi(x_j) \leq \varphi_\infty^s\Vert f \Vert_{L^{q,\infty}(\real^s,\varphi)}^q,$$
or equivalently
\begin{equation} \label{eq:pointwise_bound}
|f(\bsx)| \leq \varphi_\infty^{s/q}\Vert f \Vert_{L^{q,\infty}(\real^s,\varphi)} \left( \prod_{j=1}^s \varphi(x_j) \right)^{-1/q}.
\end{equation}
We thus have
\begin{align*}
\Vert f \Vert_{L^{q'}(\real^s,\varphi)}^{q'} &= \int_{\real^s} |f(\bsx)|^{q'} \prod_{j=1}^s \varphi(x_j) \rd\bsx \\
&\leq \varphi_\infty^{sq^{\prime}/q}\Vert f \Vert_{L^{q,\infty}(\real^s,\varphi)}^{q'}\int_{\real^s} \prod_{j=1}^s \varphi(x_j)^{1-q'/q} \rd\bsx\\
&=\varphi_\infty^{sq^{\prime}/q}\Vert f \Vert_{L^{q,\infty}(\real^s,\varphi)}^{q'}\left( \int_{\real} \varphi(x)^{1-q'/q} \rd x \right)^s.
\end{align*}
Denote $I_{\alpha} := \int_{\real} \varphi(x)^{1-\alpha} \rd x$ for $\alpha\in(0,1)$. Since $q>q'$, the exponent $1-q'/q\in(0,1)$. Lemma~\ref{lem:tailbound} shows that $I_{q'/q}<\infty$. Thus, we have
$$\Vert f \Vert_{L^{q'}(\real^s,\varphi)}^{q'} \leq (\varphi_\infty^{q^{\prime}/q} I_{q'/q})^s\Vert f \Vert_{L^{q,\infty}(\real^s,\varphi)}^{q'}.$$
Taking the $(1/q')$-th root of both sides gives the first claim
$$\Vert f \Vert_{L^{q'}(\real^s,\varphi)} \leq (\varphi_\infty^{q^{\prime}/q} I_{q'/q})^{s/q'} \Vert f \Vert_{L^{q,\infty}(\real^s,\varphi)}.$$
This proves the inequality with a constant $C_{q,q'} =  (\varphi_\infty^{q^{\prime}/q} I_{q'/q})^{1/q'}$.

For the second inequality, 
we first apply the result from the first part to each term $\partial^v f$,
$$\Vert \partial^v f \Vert_{L^{q'}(\real^s,\varphi)} \leq (\varphi_\infty^{q^{\prime}/q} I_{q'/q})^{s/q'} \Vert \partial^v f \Vert_{L^{q,\infty}(\real^s,\varphi)}.$$
Substituting this into the Sobolev norm definition gives
\begin{align*}
\Vert f \Vert_{W^{1,q'}_{\mix}(\real^s,\varphi)} &= \left(\sum_{v\subseteq \{1,\dots,s\}} \Vert \partial^v f \Vert^{q'}_{L^{q'}(\real^s,\varphi)}\right)^{1/q'}\\&\leq (\varphi_\infty^{q^{\prime}/q} I_{q'/q})^{s/q'}\left(\sum_{v\subseteq \{1,\dots,s\}} \Vert \partial^v f \Vert^{q'}_{L^{q,\infty}(\real^s,\varphi)} \right)^{1/q'}.
\end{align*}
Now, let $\bsb$ be a vector in $\mathbb{R}^{2^s}$ with components $b_v = \Vert \partial^v f \Vert_{L^{q,\infty}(\real^s,\varphi)}$. The expression above is $\varphi_\infty(I_{q'/q})^{s/q'} \Vert \bsb \Vert_{q'}$. For a finite-dimensional vector space, we know that for $q > q'$, the $\ell_{q'}$ norm is bounded by the $\ell_q$ norm. The dimension of our vector space is the number of subsets $v$, which is $d = 2^s$. The inequality is
$$\Vert \bsb \Vert_{q'} \leq d^{(1/q' - 1/q)} \Vert \bsb \Vert_q = 2^{s(q-q')/(qq')} \Vert \bsb \Vert_q.$$
Applying this inequality gives
$$\Vert f \Vert_{W^{1,q'}_{\mix}(\real^s,\varphi)} \leq (\varphi_\infty^{q^{\prime}/q} I_{q'/q})^{s/q'} \cdot 2^{s(q-q')/(qq')} \Vert f \Vert_{W^{1,q,\infty}_{\mix}(\real^s,\varphi)}.$$
Taking $\tilde C_{q,q'} = (\varphi_\infty^{q^{\prime}/q} I_{q'/q})^{1/q'} 2^{(q-q')/(qq')}$ completes the proof.
\end{proof}

\begin{proof}[Proof of Lemma \ref{lem:anova}]

For the $W^{1,q}_{\mix}(\real^s,\varphi)$ case, we first show that $P_j$ is a contraction on $W^{1,q}_{\mix}(\real^s,\varphi)$. Let $g \in W^{1,q}_{\mix}(\real^s,\varphi)$. Note that  $\partial^u P_j g = P_j \partial^u g$ if $j \notin u$, and $\partial^u P_j g = 0$ if $j \in u$,
\begin{align*}
\Vert P_j g \Vert^q_{W^{1,q}_{\mix}(\real^s,\varphi)} &= \sum_{u \subseteq 1{:}s} \Vert \partial^u (P_j g) \Vert^q_{L^q(\real^s,\varphi)} \\
&= \sum_{u \subseteq 1{:}s, j \notin u} \Vert P_j (\partial^u g) \Vert^q_{L^q(\real^s,\varphi)}.
\end{align*}
By Jensen's inequality, for any function $h$, we have $|P_j(h)(\bsx)|^q \le P_j(|h|^q)(\bsx)$. Integrating this over $\real^s$ with the weight $\prod_k \varphi(x_k)$ shows that $\Vert P_j h \Vert_{L^q(\real^s,\varphi)} \leq \Vert h \Vert_{L^q(\real^s,\varphi)}$. Thus, $P_j$ is a contraction on $L^q(\real^s,\varphi)$. Applying this, we get
\begin{align*}
\Vert P_j g \Vert^q_{W^{1,q}_{\mix}(\real^s,\varphi)} &\leq \sum_{u \subseteq 1{:}s, j \notin u} \Vert \partial^u g \Vert^q_{L^q(\real^s,\varphi)} \\
&\leq \sum_{u \subseteq 1{:}s} \Vert \partial^u g \Vert^q_{L^q(\real^s,\varphi)} = \Vert g \Vert^q_{W^{1,q}_{\mix}(\real^s,\varphi)}.
\end{align*}
This shows $\Vert P_j \Vert_{W^{1,q}_{\mix} \to W^{1,q}_{\mix}} \leq 1$. By the triangle inequality, the operator $(I-P_j)$ is also bounded: $\Vert I-P_j \Vert \leq \Vert I \Vert + \Vert P_j \Vert \leq 2$.

Since $f_v = (\prod_{j\in v} (I-P_j)) P_{1{:}s\setminus v} f$ is a composition of bounded linear operators applied to $f$, and $f \in W^{1,q}_{\mix}(\real^s,\varphi)$, it follows that $f_v \in W^{1,q}_{\mix}(\real^s,\varphi)$.

The $W^{1,q,\infty}_{\mix}(\real^s,\varphi)$ case requires Assumption~\ref{assump:rho}. We first show that $P_j$ is bounded on $L^{q,\infty}(\real^s,\varphi)$. For any $h \in L^{q,\infty}(\real^s,\varphi)$, we have the pointwise bound $|h(\bsx)| \leq \varphi_\infty^{s/q} \Vert h \Vert_{L^{q,\infty}} (\prod_k \varphi(x_k))^{-1/q}$. As a result,
\begin{align*}
|P_j h(\bsx)| &= \left| \int_{\real} h(\bsx) \varphi(x_j) \rd x_j \right| 
\leq \int_{\real} |h(\bsx)| \varphi(x_j) \rd x_j \\
&\leq \varphi_\infty^{s/q}\Vert h \Vert_{L^{q,\infty}(\real^s,\varphi)}\int_{\real}  \Big(\prod_{k=1}^s \varphi(x_k)\Big)^{-1/q}\varphi(x_j) \rd x_j \\
&= \varphi_\infty^{s/q}\Vert h \Vert_{L^{q,\infty}(\real^s,\varphi)}I_{1/q} \left(\prod_{k \neq j} \varphi(x_k)\right)^{-1/q}  ,
\end{align*}
where $I_{1/q}=\int_{\real}\varphi(y)^{1-1/q} \rd y<\infty$ by Lemma~\ref{lem:tailbound}.
Now we use this to bound the $L^{q,\infty}(\real^s,\varphi)$ norm of $P_j h$, yielding
\begin{align*}
\Vert P_j h \Vert^q_{L^{q,\infty}(\real^s,\varphi)} &= \sup_{\bsx \in \real^s} |P_j h(\bsx)|^q \prod_{k=1}^s \frac{\varphi(x_k)}{\varphi_\infty} \\
&\leq \sup_{\bsx \in \real^s}  \Vert h \Vert^q_{L^{q,\infty}(\real^s,\varphi)} I_{1/q}^q\left(\prod_{k \neq j} \varphi(x_k)\right)^{-1} \prod_{k=1}^s \varphi(x_k) \\
&= \varphi_\infty\Vert h \Vert^q_{L^{q,\infty}(\real^s,\varphi)} I_{1/q}^q.
\end{align*}
By Assumption~\ref{assump:rho}, both the integral and the supremum are finite. Let $C_j = \varphi_\infty^{1/q}I_{1/q}$. Since $\Vert P_j h \Vert_{L^{q,\infty}(\real^s,\varphi)} \leq C_j \Vert h \Vert_{L^{q,\infty}(\real^s,\varphi)}$, $P_j$ is bounded on $L^{q,\infty}(\real^s, \varphi)$.
Using the same reasoning as in the first part, for any $g \in W^{1,q,\infty}_{\mix}(\real^s,\varphi)$,
\begin{align*}
\Vert P_j g \Vert^q_{W^{1,q,\infty}_{\mix}(\real^s,\varphi)} &= \sum_{u \subseteq 1{:}s, j \notin u} \Vert P_j (\partial^u g) \Vert^q_{L^{q,\infty}(\real^s,\varphi)} \\
&\leq \sum_{u \subseteq 1{:}s, j \notin u} C_j^q \Vert \partial^u g \Vert^q_{L^{q,\infty}(\real^s,\varphi)} \leq C_j^q \Vert g \Vert^q_{W^{1,q,\infty}_{\mix}(\real^s,\varphi)}.
\end{align*}
Thus, $P_j$ is a bounded operator on $W^{1,q,\infty}_{\mix}(\real^s,\varphi)$. Consequently, $I-P_j$ is also bounded. Since $f_v$ is formed by applying these bounded operators to $f$, and $f \in W^{1,q,\infty}_{\mix}(\real^s,\varphi)$, we conclude that $f_v \in W^{1,q,\infty}_{\mix}(\real^s,\varphi)$.
\end{proof}

\begin{proof}[Proof of Lemma \ref{eqn:etabound}]
Note that for $u \in (0,1/2]$, $ \eta_p^{\prime}(u) = (pu^{-p} - p - 1)u^{-1} \eta_p(u) \le pu^{-p-1} \eta_p(u)$. 
    Thus, we have $  \eta_p(u) \ge 0$ and $ \eta_p(u) \ge \frac{u^{p+1}}{p}\eta_p^{\prime}(u)$, which implies that $ \eta_p$ is increasing and, moreover,
    $$
    \int_0^u  \eta_p(t)\rd t = 2^{-p-2}\frac{1}{p}\exp(2^p - u^{-p}) = \frac{u^{p+1}}{p} \eta_p(u).
    $$
    This proves the desired results.
\end{proof}


\bibliographystyle{siamplain}
\bibliography{qmc}
\end{document}